\numberwithin{theorem}{section}
\newtheorem{alemma}{Lemma}
\newcommand{\TheTitle}{Convergence of a class of nonmonotone descent methods for KL optimization problems}
\title{{\TheTitle}\thanks{{July 16, 2022.}
\funding{This work was supported by the National Natural Science Foundation of China
under projects No.11971177 and Guangdong Basic and Applied Basic Research Foundation (2020A1515010408).}}}
\author{Yitian Qian\thanks{School of Mathematics, South China University of Technology, Guangzhou, China
  (\email{mayttqian@mail.scut.edu.cn}).}\and
  Shaohua Pan\thanks{School of Mathematics, South China University of Technology, Guangzhou, China (\email{shhpan@scut.edu.cn}).}
 }
\begin{document}

 \maketitle

 \begin{abstract}
  This paper is concerned with a class of nonmonotone descent methods for minimizing
  a proper lower semicontinuous KL function $\Phi$, which generates a sequence 
  satisfying a nonmonotone decrease condition and a relative error tolerance. 
  Under suitable assumptions, we prove that the whole sequence converges to a limiting 
  critical point of $\Phi$ and, when $\Phi$ is a KL function of exponent $\theta\in[0,1)$, 
  the convergence admits a linear rate if $\theta\in[0,1/2]$ and a sublinear rate 
  associated to $\theta$ if $\theta\in(1/2,1)$. The required assumptions are shown 
  to be sufficient and necessary if $\Phi$ is also weakly convex on 
  a neighborhood of stationary point set. Our results resolve the convergence 
  problem on the iterate sequence generated by a class of nonmonotone line search 
  algorithms for nonconvex and nonsmooth problems, and also extend the convergence 
  results of monotone descent methods for KL optimization problems.
  As the applications, we achieve the convergence of the iterate sequence for the nonmonotone
  line search proximal gradient method with extrapolation and the nonmonotone line search proximal alternating minimization method with extrapolation. Numerical experiments
  are conducted for zero-norm and column $\ell_{2,0}$-norm regularized problems 
  to validate their efficiency.
 \end{abstract}

\begin{keywords}
 KL optimization problems, nonmonotone descent methods, global convergence, convergence rate
\end{keywords}

\begin{AMS}
  90C26, 65K05, 49M27
\end{AMS}

 \section{Introduction}\label{sec1}
 
 Let $\mathbb{X}$ represent a finite dimensional real vector space endowed with
  the inner product $\langle\cdot,\cdot\rangle$ and its induced norm $\|\cdot\|$.
  Consider the nonconvex and nonsmooth problem
  \begin{equation}\label{Fprob}
   \min_{x\in\mathbb{X}}F(x):=f(x)+g(x),
  \end{equation}
  where $f\!:\mathbb{X}\to\mathbb{R}$ is an $L_{\!f}$-smooth function
  and $g\!:\mathbb{X}\to\overline{\mathbb{R}}\!:=(-\infty,\infty]$ is a proper 
  lower semicontinuous (lsc) function. The nonmonotone descent method dates back 
  to the nonmonotone line search Newton's method proposed by Grippo et al. \cite{Grippo86} 
  for problem \eqref{Fprob} with $g\equiv 0$, aiming to improve the performance of 
  the monotone Armijo line search Newton's method. Owing to its better empirical 
  performance \cite{Grippo89}, this line search technique was later widely 
  applied to gradient-type methods (see, e.g., \cite{Raydan97, Birgin00, Grippo02}) 
  and extended to proximal gradient (PG) methods for \eqref{Fprob}. 
 \subsection{Main motivation}\label{sec1.1}
  
  For the nonmonotone line search Newton's method, Grippo et al. \cite{Grippo86,Grippo89}
  achieved the convergence of the whole iterate sequence under the restricted 
  assumption that the number of stationary points is finite. For the nonmonotone 
  line search gradient-type methods, to the best of our knowledge, there are no 
  convergence results on the whole iterate sequence even for unconstrained smooth 
  convex programs. Dai \cite{Dai02} showed that the objective value sequence of 
  any iterative method with the nonmonotone line search is R-linearly convergent 
  if the smooth objective function is strongly convex. As is well known, 
  the strong convexity is also very restricted.

  For problem \eqref{Fprob} with a finite convex $g$, Wright et al. \cite{Wright09} 
  proposed an efficient method, called SpaRSA, by the nonmonotone line search technique 
  in \cite{Grippo02}, and achieved the convergence of the objective value sequence 
  and proved that every cluster point of the iterate sequence is a (limiting) critical 
  point of $F$. For SPaRSA, Hager et al. \cite{Hager11} later obtained the sublinear 
  convergence rate and the R-linear convergence rate of the objective value sequence 
  under the convexity and the strong convexity of $F$, respectively. SpaRSA is actually 
  a PG method with a nonmonotone line search (NPG, for short) and has been extended to 
  solving \eqref{Fprob} with a continuous nonconvex $g$ (see \cite{Gong13,Kanzow21}) 
  and problem \eqref{Fprob} itself (see \cite{LuZhang12} and \cite[Appendix A]{Chen16}). 
  Lu and Zhang \cite{LuZhang12} obtained the Q-linear convergence rate of 
  a special objective value subsequence (implying the R-linear convergence rate of 
  the whole objective value sequence) under an assumption stronger than the KL property 
  of exponent $1/2$ of $F$ by \cite[Theorem 4.1]{LiPong18},  
  and Kanzow et al. \cite{Kanzow21} removed the common Lipschitz assumption 
  on $\nabla\!f$ in the convergence analysis of PG methods and proved that 
  every cluster point yielded by the NPG method is a critical one of $F$.  
  The NPG method was also applied to the DC program (see, e.g., \cite{LiuPong19, LuZhou19}) 
  and the block structured composite optimization (see, e.g., \cite{LuLin17, Yang18}). 
  Although the NPG method for nonconvex and nonsmooth composite optimization exhibits
  the promising performance, there is no convergence certificate for the generated 
  iterate sequence. Recently, Yang \cite{Yang21} proposed a nonmonotone descent method 
  for \eqref{Fprob} by combining the nonmonotone line search in \cite{Grippo02} 
  with the extrapolation technique \cite{Beck09}, but only established the convergence 
  rate of the objective value sequence for the extrapolation case and the monotone 
  line search case by assuming that $F$ is a KL function of exponent $\theta\in[0,1)$. 
  It is still unclear whether the iterate sequence is convergent. 
    
  To sum up, the convergence of the iterate sequence generated by the nonmonotone 
  line search descent method \cite{Grippo86,Grippo89} remains open for nonconvex 
  and nonsmooth composite optimization even for unconstrained smooth optimization. 
  Recently, some researchers \cite{Li15,WangLiu21,Nazih21} 
  proposed the nonmonotone accelerated PG methods by combining the nonmonotone 
  line search in \cite{Zhang04} and the extrapolation technique \cite{Beck09}. 
  Wang and Liu \cite{WangLiu21} achieved the convergence rate of 
  the objective value sequence by assuming that $F$ is a KL function 
  of exponent $\theta\in(0,1)$, and that of the iterate sequence by assuming that 
  $F$ is a KL function of exponent $\theta\in(0,3/4)$. However, the nonmonotonicity 
  involved in their methods is caused by the extrapolation strategy rather than 
  the step-size. 
  
  Let $\Phi\!:\mathbb{X}\to\overline{\mathbb{R}}$ be a proper
  lsc function that is coercive and bounded below on its domain.
  We are interested in nonmonotone descent methods for the abstract problem
  $\min_{x\in\mathbb{X}}\Phi(x)$, which generate sequences $\{x^k\}_{k\in\mathbb{N}}$
  satisfying the nonmonotone decrease and relative error conditions:
  \begin{itemize}
    \item [{\bf H1.}] For each $k\in\mathbb{N}$, $\Phi(x^{k+1})+a\|x^{k+1}-x^k\|^2\le
                    \max_{j=[k-m]_{+},\ldots,k}\Phi(x^j)$;

    \item[{\bf H2.}] For each $k\in\mathbb{N}$, $\exists w^{k}\in\partial\Phi(x^k)$ such that
                     $\|w^{k+1}\|\le b\|x^{k+1}\!-\!x^{k}\|$;
  \end{itemize}
  where $m\ge0$ is an integer with $[k\!-\!m]_{+}\!:=\max(0,k\!-\!m)$, $a>0,b>0$ are
  the given constants, and $\partial\Phi(x^{k})$ denotes the set of limiting subgradients
  of $\Phi$ at $x^{k}$. The sequences $\{x^k\}_{k\in\mathbb{N}}$ satisfying the conditions
  H1-H2 are the extension of those studied by Attouch et al. \cite{Attouch13} to
  the nonmonotone descent case. As will be shown in Section \ref{sec4}-\ref{sec5},
  the nonmonotone line search variants of the PG method and
  the proximal alternating minimization (PALM) method \cite{Bolte14} precisely generate
  the sequences satisfying conditions H1-H2. It is well known that the PG method
  (also known as the forward-backward splitting method \cite{Combettes11} or the iterative
  shrinkage-thresholding algorithm \cite{Beck09}) and the PALM method are very popular
  for nonconvex and nonsmooth composite optimization. 
 \subsection{Our contributions}\label{sec1.2}
    
  Let $\{x^k\}_{k\in\mathbb{N}}$ be a sequence complying with conditions H1-H2.
  This work focuses on the convergence analysis of $\{x^k\}_{k\in\mathbb{N}}$
  and achieves the following main results.
  \begin{itemize}
   \item  When $\Phi$ is a KL function satisfying \eqref{Phi-cond1}-\eqref{Phi-cond2}, 
          the sequence $\{x^k\}_{k\in\mathbb{N}}$ converges to a critical point of $\Phi$ 
          under condition \eqref{assump0}, which involves the growth of an objective 
          value subsequence and is shown to be sufficient and necessary for 
          $\sum_{k=0}^{\infty}\|x^{k+1}-x^k\|<\infty$ if $\Phi$ is also weakly convex 
          on a neighborhood of stationary point set, and now a sufficient condition 
          independent of the objective value sequence (see condition \eqref{scond}) 
          is provided for \eqref{assump0} and is shown to hold automatically if $\Phi$ 
          is also $\rho$-weakly convex with $\rho\le\frac{a}{8(m+1)^2}$ 
          on a neighborhood of stationary point set; 

   \item  When $\Phi$ is a KL function of exponent $\theta\in[0,1)$ satisfying 
          \eqref{Phi-cond1}-\eqref{Phi-cond2}, the sequence $\{x^k\}_{k\in\mathbb{N}}$ 
           converges linearly if $\theta\in(0,1/2]$ and sublinearly if $\theta\in(1/2,1)$ 
           to a critical point of $\Phi$ under condition \eqref{rate-cond}, 
           which also involves the growth of an objective value subsequence and 
           is shown to be sufficient and necessary for the conclusion of Theorem 
           \ref{KL-rate} (ii) if $\Phi$ is also weakly convex on a neighborhood of 
          stationary point set, and in this case condition \eqref{scond} is also 
          sufficient for \eqref{rate-cond} and holds automatically if $\Phi$ 
          is $\rho$-weakly convex with $\rho\le\frac{a}{8(m+1)^2}$ on a neighborhood 
          of stationary point set.

   \item  The nonmonotone line search PG method with extrapolation (PGenls) 
          proposed in \cite{Yang21} for solving \eqref{Fprob} with a proper 
          lsc $g\!:\mathbb{X}\to\overline{\mathbb{R}}$ and a nonmonotone line search 
          PALM method with extrapolation (PALMenls) for solving \eqref{PALM-prob} 
          are demonstrated to generate the sequences satisfying the conditions H1-H2,
          and their global convergence and local convergence rate are achieved under suitable
          assumptions. Numerical experiments are conducted to validate their superiority to
          the monotone line search or the accelerated version in some scenarios.
  \end{itemize}
  
  As a byproduct, when $\Phi$ is a KL function of exponent $\theta\in[0,1)$ satisfying conditions \eqref{Phi-cond1}-\eqref{Phi-cond2}, we also obtain the linear convergence rate
  of the objective value sequence $\{\Phi(x^k)\}_{k\in\mathbb{N}}$ if $\theta\in(0,1/2]$ and the sublinear convergence rate if $\theta\in(1/2,1)$. Then, when applying
  PGenls and PALMenls to problems \eqref{Fprob} and \eqref{PALM-prob}, respectively,
  if the objective functions are the KL function of exponent $\theta\in[1/2,1)$,
  the generated objective value sequences have the corresponding convergence rate.

  It is worth pointing out that conditions \eqref{Phi-cond1}-\eqref{Phi-cond2}
  are rather weak, which are satisfied by the objective functions of the composite problems \eqref{Fprob} and \eqref{PALM-prob}. As discussed thoroughly in \cite[Section 4]{Attouch10}, there are a large number of nonconvex nonsmooth optimization problems involving KL functions, which include real semi-algebraic functions and those functions definable in an o-minimal structure \cite{Kurdyka98,Bolte07}. Thus, the obtained convergence results have a wide range of applications.  
 \section{Notation and preliminaries}\label{sec2} 
 
  Throughout this paper, for a sequence $\{a_k\}_{k\in\mathbb{N}}$ and an index set $K\subseteq\mathbb{N}$, $\sum_{K\ni j=k_1}^{k_2}a_j$ denotes the sum of those $a_j$ with
  $j\in K\cap[k_1,k_2]$. For a proper $h\!:\mathbb{X}\!\to\overline{\mathbb{R}}$,
  denote by ${\rm dom}\,h$ its effective domain, $\partial h(\overline{x})$ 
  denotes the (limiting) subdifferential of $h$ at $\overline{x}$, 
  and for any $-\infty<\!\eta_1<\!\eta_2<\!\infty$, 
  write $[\eta_1<h<\eta_2]\!:=\{x\in\mathbb{X}\,|\,\eta_1<h(x)<\eta_2\}$. 
  For a given $\eta\in(0,\infty]$, $\Upsilon_{\!\eta}$ denotes the family 
  of continuous concave $\varphi\!:[0,\eta)\to\mathbb{R}_{+}$ 
  that is continuously differentiable on $(0,\eta)$ with $\varphi'(s)>0$ 
  for all $s\in(0,\eta)$ and $\varphi(0)=0$. For a proper lsc $h\!:\mathbb{X}\!\to\overline{\mathbb{R}}$,
  $\mathcal{P}_{\gamma}h(x):=\mathop{\arg\min}_{z\in\mathbb{X}}\big\{\frac{1}{2\gamma}\|z-x\|^2+h(z)\big\}$ denotes the proximal mapping of $h$ associated to $\gamma>0$. For a matrix $A\in\mathbb{R}^{n\times p}$, $\|A\|$ and $\|A\|_F$ denote its spectral norm 
  and Frobenius norm. 
 \begin{definition}\label{Gsubdiff-def}
  (see \cite[Definition 8.3]{RW98}) Consider a proper function $h\!:\mathbb{X}\to\overline{\mathbb{R}}$
  and a point $x\in{\rm dom}\,h$. The regular subdifferential of $h$ at $x$,
  denoted by $\widehat{\partial}h(x)$, is defined as
  \[
    \widehat{\partial}h(x):=\bigg\{v\in\mathbb{X}\ \big|\
    \liminf_{x\ne x'\to x}\frac{h(x')-h(x)-\langle v,x'-x\rangle}{\|x'-x\|}\ge 0\bigg\};
  \]
  and the (limiting) subdifferential of $h$ at $x$, denoted by $\partial h(x)$, is defined as
  \[
    \partial h(x):=\Big\{v\in\mathbb{X}\ |\  \exists\,x^k\to x\ {\rm with}\ h(x^k)\to h(x)\ {\rm and}\
    v^k\in\widehat{\partial}h(x^k)\ {\rm such\ that}\ v^k\to v\Big\}.
  \]
 \end{definition}

  For any $x\in{\rm dom}\,h$, the set $\widehat{\partial}h(x)$ is closed convex,
  $\partial h(x)$ is closed but generally nonconvex, and they satisfy
  $\widehat{\partial}h(x)\subseteq\partial h(x)$. The inclusion may be strict
  when $h$ is nonconvex. Recall that a function $h\!:\mathbb{X}\to\overline{\mathbb{R}}$
 is said to be weakly convex if there exists a constant $\rho>0$ such that
 the function $x\mapsto h(x)+\frac{\rho}{2}\|x\|^2$ is convex. For such $h$,
 at any $x\in{\rm dom}\,h$, $\widehat{\partial}h(x)=\partial h(x)$.
 In the sequel, the set of those points $\overline{x}$ at which $0\in\partial h(\overline{x})$
 is called the critical point set of $h$, denoted by ${\rm crit}\,h$.
 \begin{definition}\label{KL-def}
  A proper lsc function $h\!:\mathbb{X}\to\overline{\mathbb{R}}$ is said to have
  the KL property at $\overline{x}\in{\rm dom}\,\partial h$ if there exist $\eta\in(0,\infty]$,
  a neighborhood $\mathcal{U}$ of $\overline{x}$, and a function $\varphi\in\Upsilon_{\!\eta}$ 
  such that for all $x\in\mathcal{U}\cap\big[h(\overline{x})<h<h(\overline{x})+\eta\big]$,
  $\varphi'(h(x)\!-\!h(\overline{x})){\rm dist}(0,\partial h(x))\ge 1$.
  If $\varphi$ can be chosen as $\varphi(t)=ct^{1-\theta}$ with $\theta\in[0,1)$
  for some $c>0$, then $h$ is said to have the KL property of exponent $\theta$
  at $\overline{x}$. If $h$ has the KL property (of exponent $\theta$) at each point
  of ${\rm dom}\,\partial h$, then it is called a KL function (of exponent $\theta$).
 \end{definition}
 \begin{remark}\label{KL-remark}
  By \cite[Lemma 2.1]{Attouch10},
  a proper lsc function has the KL property at any noncritical point.
  Thus, to show that a proper lsc $h\!:\mathbb{X}\to\overline{\mathbb{R}}$
  is a KL function, it suffices to check its KL property at critical points.
  On the calculation of KL exponent, refer to the recent works \cite{LiPong18,YuLiPong21}.
 \end{remark}
\section{Convergence results}\label{sec3}

Denote by $\ell(k)$ the maximum index in $\mathop{\arg\max}_{j=[k-m]_{+},\ldots,k}\Phi(x^j)$. 
We provide two technical lemmas that are used in the subsequent analysis. 
Since Lemma \ref{lemma1-sequence} is immediate by using $\sqrt{ab}\le\frac{1}{4}a+b$ 
for any $a\ge 0,b\ge 0$, we omit its proof. 
\begin{lemma}\label{lemma1-sequence}
	Let $\{z^l\}_{l\in\mathbb{N}}\subset\mathbb{X},\{\alpha_l\}_{l\in\mathbb{N}}
	\subset\mathbb{R}_{+}$ and $\{\beta_l\}_{l\in\mathbb{N}}\subset\mathbb{R}_{+}$ 
	be the given sequences, and let $\mathcal{K}$ be an index set. If there exists 
	an index $\overline{l}\in\mathbb{N}$ such that for all $\mathcal{K}\ni l\ge\overline{l}$, 
	$\beta_l>\beta_{l+1}>0$ and $\|z^{l+1}-\!z^l\|\le\!\sqrt{\alpha_l(\beta_l-\!\beta_{l+1})}$, 
	then for any $\nu>l$ with $\mathcal{K}\ni l\ge\overline{l}$,
	$\sum_{\mathcal{K}\ni j=l}^{\nu}\!\big\|z^{j+1}\!-z^j\big\|
	\le\frac{1}{4}\sum_{j=l}^{\nu}\!\alpha_j+\beta_l$.
\end{lemma}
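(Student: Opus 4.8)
The plan is to reduce the whole statement to the scalar inequality flagged in the hint, namely $\sqrt{ab}\le\frac14 a+b$ for all $a\ge 0,\,b\ge 0$, which itself drops out of expanding $\big(\tfrac12\sqrt{a}-\sqrt{b}\big)^2\ge 0$, i.e. $\tfrac14 a-\sqrt{ab}+b\ge 0$. First I would apply this termwise. Fix $l\in\mathcal{K}$ with $l\ge\overline{l}$ and an arbitrary $\nu>l$. For every $j\in\mathcal{K}$ with $l\le j\le\nu$ we have $j\ge\overline{l}$, so the hypothesis supplies both $\beta_j-\beta_{j+1}>0$ and $\|z^{j+1}-z^j\|\le\sqrt{\alpha_j(\beta_j-\beta_{j+1})}$. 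Choosing $a=\alpha_j$ and $b=\beta_j-\beta_{j+1}$ then gives, for each such $j$, the bound $\|z^{j+1}-z^j\|\le\frac14\alpha_j+(\beta_j-\beta_{j+1})$.

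Next I would sum this estimate over $j\in\mathcal{K}\cap[l,\nu]$, which separates the right-hand side into an $\alpha$-part and a $\beta$-part. For the $\alpha$-part, nonnegativity of the $\alpha_j$ means that enlarging the summation index set from $\mathcal{K}\cap[l,\nu]$ to the full block $[l,\nu]$ only increases the sum, so $\frac14\sum_{\mathcal{K}\ni j=l}^{\nu}\alpha_j\le\frac14\sum_{j=l}^{\nu}\alpha_j$, producing exactly the first term of the claimed bound. It then remains to show that the $\beta$-part satisfies $\sum_{\mathcal{K}\ni j=l}^{\nu}(\beta_j-\beta_{j+1})\le\beta_l$.

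The hard part will be precisely this last telescoping step, because $\mathcal{K}$ need not be an interval, so the differences $\beta_j-\beta_{j+1}$ do not cancel across consecutive integers in the naive way. I would handle it by exploiting that each summand is nonnegative and that the accumulated drops recorded at the indices of $\mathcal{K}$ cannot exceed the total descent of $\{\beta_l\}$: telescoping from the initial value $\beta_l$ and discarding the final retained value, which is strictly positive by the hypothesis $\beta_{j+1}>0$, yields $\sum_{\mathcal{K}\ni j=l}^{\nu}(\beta_j-\beta_{j+1})\le\beta_l$. Combining this with the $\alpha$-part bound gives the asserted inequality. Beyond carefully bookkeeping the sparse index set $\mathcal{K}$ in this final sum, no genuine difficulty arises, which is why the conclusion is essentially immediate once the elementary inequality is in hand.
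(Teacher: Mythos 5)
Your first two steps are exactly the intended route (the paper omits the proof precisely because it reduces to $\sqrt{ab}\le\frac14 a+b$): the termwise bound $\|z^{j+1}-z^j\|\le\frac14\alpha_j+(\beta_j-\beta_{j+1})$ for $j\in\mathcal{K}\cap[l,\nu]$ and the enlargement of the $\alpha$-sum to the full block $[l,\nu]$ are both correct. The gap is in the step you yourself flag as the hard part. The assertion that ``the accumulated drops recorded at the indices of $\mathcal{K}$ cannot exceed the total descent of $\{\beta_l\}$'' is not a consequence of the stated hypotheses, which control $\beta_j$ versus $\beta_{j+1}$ only at indices $j\in\mathcal{K}$ and say nothing about how $\beta$ behaves across the gaps between consecutive elements of $\mathcal{K}$. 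Writing $\mathcal{K}\cap[l,\nu]=\{j_1<\dots<j_r\}$, your telescoping needs $\beta_{j_{i+1}}\le\beta_{j_i+1}$ for each $i$, and that can fail: take $\mathcal{K}=\{1,3\}$, $\beta_1=1$, $\beta_2=\tfrac12$, $\beta_3=100$, $\beta_4=\tfrac1{10}$; the hypotheses at $l=1$ and $l=3$ are satisfied, yet $\sum_{\mathcal{K}\ni j=1}^{3}(\beta_j-\beta_{j+1})=100.4>\beta_1$. Choosing $\alpha_j=4(\beta_j-\beta_{j+1})$ (so that your AM--GM step is tight) even falsifies the lemma's conclusion under this literal reading, so no argument can close the gap from the hypotheses as written.

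What actually makes the step work --- and what the lemma tacitly assumes, since this is how it is used throughout Section 3 --- is that $\{\beta_j\}$ is nonincreasing over the \emph{whole} range $[l,\nu+1]$, not merely at the $\mathcal{K}$-indices: in every application one has $\beta_j=ba^{-1}\varphi(\Phi(x^{\ell(j)})-\Phi^*)$ (up to constants) with $\{\Phi(x^{\ell(j)})\}$ nonincreasing by \eqref{lk-monotone} and $\varphi$ increasing. With that monotonicity in hand the bound is immediate, since every omitted term is nonnegative:
\begin{equation*}
\sum_{\mathcal{K}\ni j=l}^{\nu}(\beta_j-\beta_{j+1})
\le\sum_{j=l}^{\nu}(\beta_j-\beta_{j+1})
=\beta_l-\beta_{\nu+1}\le\beta_l .
\end{equation*}
You should either state this global monotonicity as the (intended) hypothesis or verify it in each invocation; as written, your justification of the telescoping inequality does not follow.
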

\begin{lemma}\label{lemma1-Phi}
	Let $\{x^k\}_{k\in\mathbb{N}}$ be a sequence satisfying condition H1, 
	and denote by $\varpi(x^0)$ the cluster point set of the sequence 
	$\{x^k\}_{k\in\mathbb{N}}$. Then, the following assertions hold.
	\begin{itemize}
		\item [(i)] The sequence $\{x^k\}_{k\in\mathbb{N}}$ is bounded, 
		and $\varpi(x^0)$ is a nonempty and compact set.
		
		\item[(ii)] The sequence $\{\Phi(x^k)\}_{k\in\mathbb{N}}$ is convergent and
		$\lim_{k\to\infty}x^{k+1}\!-\!x^{k}=0$, provided that
		\begin{equation}\label{Phi-cond1}
			\liminf_{k\to\infty}\Phi(x^{k})\ge\lim_{k\to\infty}\Phi(x^{\ell(k)}).
		\end{equation}
		
		\item[(iii)] The function $\Phi$ keeps constant on the set $\varpi(x^0)$ 
		when inequality \eqref{Phi-cond1} is satisfied and
		\begin{equation}\label{Phi-cond2}
			\limsup_{j\to\infty}\Phi(x^{k_j})\le\Phi(\widehat{x})
			\quad{\rm for\ each}\ \{x^{k_j}\}_{j\in\mathbb{N}}
			\ {\rm with}\ \lim_{j\to\infty}x^{k_j}=\widehat{x}.
		\end{equation}
		
		\item[(iv)] If $\Phi(x^{\ell(k)})=\Phi(x^{\ell(\overline{k})})$ for all $k\ge\overline{k}$,
		then all $x^{k}$ for $k>\overline{k}\!+\!m$ are the same.
		
		\item[(v)] If $\{x^k\}_{k\in\mathbb{N}}$ also satisfies H2,
		then under conditions \eqref{Phi-cond1}-\eqref{Phi-cond2} 
		$\varpi(x^0)\subseteq{\rm crit}\Phi$.
	\end{itemize}
\end{lemma}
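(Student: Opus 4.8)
The plan is to treat the five assertions in order, exploiting throughout the majorant $M_k:=\Phi(x^{\ell(k)})=\max_{j=[k-m]_+,\ldots,k}\Phi(x^j)$. For (i), I would first check that $\{M_k\}$ is nonincreasing: H1 gives $\Phi(x^{k+1})\le M_k$, and since the indices defining $M_{k+1}$ lie in $\{[k-m]_+,\ldots,k\}\cup\{k+1\}$, the max over $\{[k+1-m]_+,\ldots,k\}$ is bounded by $M_k$ while $\Phi(x^{k+1})\le M_k$, so $M_{k+1}\le M_k$. As $\Phi$ is bounded below, $\{M_k\}$ converges; in particular every $\Phi(x^k)\le M_0$, so $\{x^k\}$ lies in a sublevel set of $\Phi$, which is bounded by coercivity. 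Boundedness forces $\varpi(x^0)$ to be nonempty, and it is closed, hence compact, as a cluster-point set.

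For (ii), write $M^*:=\lim_k M_k$. From $\Phi(x^{k})\le M_{k-1}$ I get $\limsup_k\Phi(x^k)\le M^*$, while \eqref{Phi-cond1} supplies $\liminf_k\Phi(x^k)\ge M^*$, so $\Phi(x^k)\to M^*$. Feeding this into H1 rewritten as $a\|x^{k+1}-x^k\|^2\le M_k-\Phi(x^{k+1})$, the right-hand side tends to $M^*-M^*=0$, yielding $x^{k+1}-x^k\to0$. Assertion (iii) is then immediate: for any $x^{k_j}\to\widehat{x}$, lower semicontinuity gives $\Phi(\widehat{x})\le\liminf_j\Phi(x^{k_j})=M^*$, while \eqref{Phi-cond2} gives $\Phi(\widehat{x})\ge\limsup_j\Phi(x^{k_j})=M^*$, so $\Phi\equiv M^*$ on $\varpi(x^0)$.

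The \emph{crux} is (iv), where $M_k=M_{\overline{k}}=:M$ for all $k\ge\overline{k}$. The key local step is: if $\Phi(x^j)=M$ for some $j\ge\overline{k}+1$, then H1 at index $j-1$, namely $\Phi(x^j)+a\|x^j-x^{j-1}\|^2\le M_{j-1}=M$, forces $x^j=x^{j-1}$ and $\Phi(x^{j-1})=M$. Iterating downward shows $x^{\overline{k}}=\cdots=x^j$. Since every window $\{[k-m]_+,\ldots,k\}$ must contain an index attaining the maximum $M$, once $k$ is large enough that this window is contained in $\{\overline{k}+1,\overline{k}+2,\ldots\}$ the downward propagation forces the corresponding iterates to coincide with $x^{\overline{k}}$; sliding the window over all such $k$ then gives that all $x^k$ with $k>\overline{k}+m$ are equal. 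The main obstacle I anticipate is precisely this index bookkeeping: correctly tracking which window each maximizing index belongs to and verifying that the downward propagation reaches the stated range.

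Finally, for (v) I would combine (ii) with H2. Choosing, via H2, $w^{k}\in\partial\Phi(x^k)$ with $\|w^{k}\|\le b\|x^{k}-x^{k-1}\|$, the bound tends to $0$ by (ii), so $w^k\to0$. Given any $\widehat{x}\in\varpi(x^0)$ with $x^{k_j}\to\widehat{x}$, assertion (iii) together with (ii) gives $\Phi(x^{k_j})\to M^*=\Phi(\widehat{x})$, and the closedness of the graph of the limiting subdifferential (Definition \ref{Gsubdiff-def}) lets me pass to the limit in $w^{k_j}\in\partial\Phi(x^{k_j})$ with $w^{k_j}\to0$ to conclude $0\in\partial\Phi(\widehat{x})$, that is $\widehat{x}\in{\rm crit}\,\Phi$.
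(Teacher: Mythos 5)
Your proposal is correct, and parts (i), (iii) and (v) follow the paper's own route essentially verbatim: monotonicity of $M_k=\Phi(x^{\ell(k)})$ plus coercivity for (i), squeezing between lower semicontinuity and \eqref{Phi-cond2} for (iii), and closedness of the graph of $\partial\Phi$ for (v). The genuine differences are in (ii) and (iv). For the second half of (ii), you apply H1 directly at index $k$ and note that $a\|x^{k+1}-x^k\|^2\le \Phi(x^{\ell(k)})-\Phi(x^{k+1})\to M^*-M^*=0$; the paper instead applies H1 at the shifted indices $\ell(k)-j_k-1$ and recovers $x^{k-m}-x^{k-m-1}\to 0$ from there. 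Your version is shorter and uses nothing beyond what you have already established, so it is a clean simplification. For (iv), the paper argues by contradiction: assuming two later iterates differ, it locates a strict drop $\Phi(x^{k_1+\overline{j}})<\omega$ and propagates it \emph{forward} through a full window of length $m+1$, contradicting $\Phi(x^{\ell(k_1+\overline{j}+m)})=\omega$. You instead argue directly: each window contains a maximizer $j_k$ with $\Phi(x^{j_k})=M$, and H1 forces equality to propagate \emph{backward} from $j_k$ all the way to $x^{\overline{k}}$; sliding the window (e.g.\ using the window at $k+m$ to reach index $k$) covers every index, and in fact yields the slightly stronger conclusion that all $x^k$ with $k\ge\overline{k}$ coincide. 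Both arguments rest on the same two facts (the max is attained in each window, and H1 with $M_{k}=M$ forces $x^{j}=x^{j-1}$ whenever $\Phi(x^j)=M$), but your direct version avoids the contradiction setup and is easier to check; the index bookkeeping you flagged is handled correctly in your sketch.
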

\begin{proof}
	{\bf(i)} From condition H1 and the definition of $\ell(k)$,
	it follows that for each $k\in\mathbb{N}$,
	\begin{equation}\label{lk-monotone}
		\Phi(x^{\ell(k+1)})\le\!\max\{\Phi(x^{\ell(k)}),\Phi(x^{k+1})\}
		\le\!\max\{\Phi(x^{\ell(k)}),\Phi(x^{\ell(k)})\!-\!a\|x^{k+1}\!-\!x^k\|^2\}
		=\!\Phi(x^{\ell(k)}).
	\end{equation}
	This means that $\{x^k\}_{k\in\mathbb{N}}\subseteq\{x\in\mathbb{X}\,|\,\Phi(x)\le\Phi(x^0)\}$.
	The result follows by the coerciveness of $\Phi$.
	
	\noindent
	{\bf(ii)} Note that \eqref{lk-monotone} implies $\limsup_{k\to\infty}\Phi(x^{k})
	\le\limsup_{k\to\infty}\Phi(x^{\ell(k)})$ and the convergence of 
	$\{\Phi(x^{\ell(k)})\}_{k\in\mathbb{N}}$. Together with \eqref{Phi-cond1},
	the sequence $\{\Phi(x^k)\}_{k\in\mathbb{N}}$ is convergent. 
	For each $k\ge m$ and $j_k\in\{0,1,\ldots,m\!+\!1\}$, by condition H1,
	$\Phi(x^{\ell(k)-j_k})\le \Phi(x^{\ell(\ell(k)-j_k-1)})
	-a\|x^{\ell(k)-j_k}-x^{\ell(k)-j_k-1}\|^2$,
	which by the convergence of $\{\Phi(x^k)\}_{k\in\mathbb{N}}$ implies that
	$\lim_{k\to\infty}(x^{\ell(k)-j_k}-x^{\ell(k)-j_k-1})=0$.
	Since $\ell(k)\in\{k\!-\!m,\ldots,k\}$ for each $k\ge m$,
	we have $k\!-\!(m\!+\!1)=\ell(k)\!-\!j_k$ for some $j_k\in\{0,\ldots,m\!+\!1\}$. 
	Then, $\lim_{k\to\infty}(x^{k-m}\!-\!x^{k-m-1})=0$.
	
	\noindent
	{\bf(iii)} Pick any $\overline{x}\in\varpi(x^0)$. There exists a subsequence
	$\{x^{k_j}\}_{j\in\mathbb{N}}$ such that $\lim_{j\to\infty}x^{k_j}=\overline{x}$.
	By combining \eqref{Phi-cond2} and the lower semicontinuity of $\Phi$, 
	we obtain $\lim_{j\to\infty}\Phi(x^{k_j})=\Phi(\overline{x})$.
	From part (ii), $\Phi(\overline{x})=\lim_{k\to\infty}\Phi(x^k)$.
	By the arbitrariness of $\overline{x}$, $\Phi$ keeps constant on $\varpi(x^0)$.
	
	\noindent
	{\bf(iv)} Let $\Phi(x^{\ell(k)})=\omega$ for all $k\ge\overline{k}$.
	Suppose that the conclusion does not hold. Then there exist $k_2>k_1>\overline{k}+m$
	such that $x^{k_1}\ne x^{k_2}$. By condition H1, for each $j\in\{0,1,\ldots,k_2\!-\!k_1\}$,
	we have $\Phi(x^{k_1+j})+a\|x^{k_1+j}\!-x^{k_1+j-1}\|^2\le\Phi(x^{\ell(k_1+j-1)})=\omega$.
	Since $x^{k_1}\ne x^{k_2}$, there exists $\overline{j}\in\{0,\ldots,k_2-\!k_1\}$
	such that $\|x^{k_1+\overline{j}}-x^{k_1+\overline{j}-1}\|\ne 0$ and then
	$\Phi(x^{k_1+\overline{j}})<\omega$. Together with 
	$\Phi(x^{k_1+\overline{j}+1})+a\|x^{k_1+\overline{j}+1}-x^{k_1+\overline{j}}\|^2
	\le\Phi(x^{\ell(k_1+\overline{j})})=\omega$, we have
	$\Phi(x^{k_1+\overline{j}+1})=\Phi(x^{k_1+\overline{j}})<\omega$.
	Using the similar arguments leads to $\max\{\Phi(x^{k_1+\overline{j}}),
	\Phi(x^{k_1+\overline{j}+1}),\ldots,\Phi(x^{k_1+\overline{j}+m})\}<\omega$. 
	This yields a contradiction $\omega=\Phi(x^{\ell(k_1+\overline{j}+m)})
	\le\max_{0\le i\le m}\Phi(x^{k_1+\overline{j}+i})<\omega$.
	The desired result holds.
	
	\noindent
	{\bf(v)} Pick any $\overline{x}\in\varpi(x^0)$. There exists a subsequence
	$\{x^{k_j}\}_{j\in\mathbb{N}}$ such that $\lim_{j\to\infty}x^{k_j}=\overline{x}$.
	From condition H2, for each $j\in\mathbb{N}$, there exists $w^{k_j}\in\partial\Phi(x^{k_j})$ 
	with $\|w^{k_j}\|\le b\|x^{k_j}-x^{k_j-1}\|$. By part (ii),
	$\lim_{j\to\infty}w^{k_j}=0$. In addition, since $\Phi$ is lsc, 
	from \eqref{Phi-cond2} we have $\lim_{j\to\infty}\Phi(x^{k_j})=\Phi(\overline{x})$.
	Thus, by the definition of the limiting subdifferential, 
	$0\in\partial\Phi(\overline{x})$ and the inclusion follows.
\end{proof}

Inequalities \eqref{Phi-cond1}-\eqref{Phi-cond2} are easily satisfied 
by some specific lsc $\Phi$; see Sections \ref{sec4}-\ref{sec5}. 
Then, Lemma \ref{lemma1-Phi} (ii) provides the convergence of $\{\Phi(x^k)\}_{k\in\mathbb{N}}$
under a weaker condition than the continuity of $\Phi$ as required in 
\cite{Wright09,Hager11,Gong13,Kanzow21}. In the sequel, we let $\{x^k\}_{k\in\mathbb{N}}$ 
be a sequence satisfying conditions H1-H2 and denote by $\omega(x^0)$ its cluster point set, 
and write $\Xi_k:=\|x^{\ell(k)}\!-\!x^{\ell(k)-1}\|$ for each $k\in\mathbb{N}$. 
\subsection{Global convergence}\label{sec3.1}
By the proof of \cite[Lemma 5]{Bolte14}, under condition \eqref{Phi-cond1}, 
the set $\omega(x^0)$ is also connected. Together with Lemma \ref{lemma1-Phi} (i), 
when $\Phi$ satisfying \eqref{Phi-cond1} is such that every point of $\varpi(x^0)$ 
is isolated, it is immediate to obtain the convergence of $\{x^k\}_{k\in\mathbb{N}}$.
This section focuses on the convergence of $\{x^k\}_{k\in\mathbb{N}}$ 
under the case that $\varpi(x^0)$ has at least a non-isolated point.
Denote by $\Phi^*$ the limit of $\{\Phi(x^k)\}_{k\in\mathbb{N}}$.    
We need the following technical lemma. 
\begin{lemma}\label{lk-sequence}
	If $\Phi$ is a KL function satisfying \eqref{Phi-cond1}-\eqref{Phi-cond2},
	then $\sum_{k=1}^\infty\Xi_k<\infty$.
\end{lemma}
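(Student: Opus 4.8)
The plan is to reduce the nonmonotone situation to a monotone Attouch--Bolte--Svaiter-type telescoping (in the spirit of \cite{Attouch13}) by passing to the \emph{sequence of peak indices}. First I would dispose of a degenerate case. By \eqref{lk-monotone} the sequence $\{\Phi(x^{\ell(k)})\}$ is nonincreasing, and by Lemma \ref{lemma1-Phi}(ii) under \eqref{Phi-cond1} it converges; its limit is $\Phi^*$. If $\Phi(x^{\ell(k)})=\Phi^*$ for some $k$, then monotonicity forces $\Phi(x^{\ell(k')})=\Phi(x^{\ell(k)})$ for all $k'\ge k$, so Lemma \ref{lemma1-Phi}(iv) makes the iterates eventually constant and $\Xi_{k'}=0$ for all large $k'$, whence $\sum_k\Xi_k<\infty$ trivially. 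Thus I may assume $\Phi(x^{\ell(k)})>\Phi^*$ for every $k$. Using Lemma \ref{lemma1-Phi}(i),(iii),(v), the set $\omega(x^0)$ is nonempty, compact, contained in ${\rm crit}\,\Phi$, and $\Phi\equiv\Phi^*$ on it; invoking the uniformized KL property (\cite[Lemma 6]{Bolte14}) I obtain $\varepsilon,\eta>0$ and $\varphi\in\Upsilon_{\!\eta}$ for which $\varphi'(\Phi(x)-\Phi^*)\,{\rm dist}(0,\partial\Phi(x))\ge1$ whenever ${\rm dist}(x,\omega(x^0))<\varepsilon$ and $\Phi^*<\Phi(x)<\Phi^*+\eta$. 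Since $\{x^k\}$ is bounded with cluster set $\omega(x^0)$ we have ${\rm dist}(x^k,\omega(x^0))\to0$, so this inequality becomes applicable at all $x^{\ell(k)}$ with $k$ large.

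The key structural step, and the one I expect to carry the difficulty, is to show that $k\mapsto\ell(k)$ is nondecreasing with bounded jumps. Comparing the windows $[k\!-\!m,k]$ and $[k\!+\!1\!-\!m,k\!+\!1]$ and using $\Phi(x^{k+1})\le\Phi(x^{\ell(k)})$ from H1, one checks that the windowed maximum can only move forward, so $\ell(k)\le\ell(k+1)$, and $\ell(k+1)-\ell(k)\le m+1$ because $\ell(k+1)\le k+1$ and $\ell(k)\ge k-m$. Let $n_1<n_2<\cdots$ be the distinct values taken by $\ell(\cdot)$ and set $d_i:=\|x^{n_i}-x^{n_i-1}\|$. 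Two consequences follow. First, each $n_i$ equals $\ell(k)$ for at most $m+1$ consecutive indices $k$ (namely $k\in[n_i,n_i+m]$), so grouping $\sum_k\Xi_k$ by the value of $\ell(k)$ gives $\sum_k\Xi_k\le(m+1)\sum_i d_i$; it therefore suffices to prove $\sum_i d_i<\infty$. Second, the bounded-jump property yields $n_i\in[n_{i+1}-1-m,\,n_{i+1}-1]$ together with $\ell(n_{i+1}-1)=n_i$, so the windowed decrease collapses to a genuine consecutive decrease of the peak values $Q_i:=\Phi(x^{n_i})$.

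With this reduction the argument runs exactly as in the monotone case. Applying H1 at index $n_{i+1}-1$ and using $\ell(n_{i+1}-1)=n_i$ gives $a\,d_{i+1}^2\le Q_i-Q_{i+1}$. Applying H2 at $x^{n_i}$ furnishes $w^{n_i}\in\partial\Phi(x^{n_i})$ with ${\rm dist}(0,\partial\Phi(x^{n_i}))\le\|w^{n_i}\|\le b\,d_i$; since the KL inequality rules out ${\rm dist}(0,\partial\Phi(x^{n_i}))=0$ while $Q_i>\Phi^*$, necessarily $d_i>0$, and the KL inequality reads $\varphi'(Q_i-\Phi^*)\ge 1/(b\,d_i)$. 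Writing $\beta_i:=\varphi(Q_i-\Phi^*)$, which is nonincreasing with limit $0$, concavity of $\varphi$ gives $\beta_i-\beta_{i+1}\ge\varphi'(Q_i-\Phi^*)(Q_i-Q_{i+1})\ge (Q_i-Q_{i+1})/(b\,d_i)\ge a\,d_{i+1}^2/(b\,d_i)$, that is $d_{i+1}\le\sqrt{\tfrac{b}{a}\,d_i(\beta_i-\beta_{i+1})}$.

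Finally I would feed this into Lemma \ref{lemma1-sequence} along the index set $\mathcal{K}=\{n_i-1\}$, with $\alpha$ proportional to $d_i$ and the factor $b/a$ absorbed into $\beta$; equivalently, $\sqrt{uv}\le\tfrac14u+v$ with $u=d_i$ and $v=\tfrac{b}{a}(\beta_i-\beta_{i+1})$ gives $d_{i+1}\le\tfrac14 d_i+\tfrac{b}{a}(\beta_i-\beta_{i+1})$. Summing over large $i$ and telescoping the $\beta$-differences (whose total equals $\beta$ at the starting index, hence is finite) leaves a self-referential $\tfrac14\sum_i d_i$ with coefficient $\tfrac14<1$, which—working with partial sums to justify the rearrangement—is absorbed to conclude $\sum_i d_i<\infty$, and therefore $\sum_k\Xi_k\le(m+1)\sum_i d_i<\infty$. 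The crux is thus entirely in establishing the monotonicity and bounded-jump behaviour of $\ell(\cdot)$ and the identity $\ell(n_{i+1}-1)=n_i$: these convert the windowed, nonmonotone decrease condition H1 into a consecutive decrease of the peak value sequence and confine the loss from nonmonotonicity to the single factor $(m+1)$.
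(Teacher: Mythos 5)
Your reduction to peak indices founders on the identity $\ell(n_{i+1}-1)=n_i$, which you correctly single out as the crux but which is false. Take $m=2$ and objective values $\Phi(x^0)=10$, $\Phi(x^1)=6$, $\Phi(x^2)=5$, $\Phi(x^3)=4$, $\Phi(x^4)=3,\dots$, which are consistent with H1. Then $\ell(0)=\ell(1)=\ell(2)=0$, $\ell(3)=1$, $\ell(4)=2$, so $n_1=0$, $n_2=1$, $n_3=2$, while $\ell(n_3-1)=\ell(1)=0=n_1\ne n_2$. What is true is only that $\ell(n_{i+1}-1)=n_{i'}$ for some $i'\le i$ with $n_{i'}\ge n_{i+1}-1-m$ (hence $i-i'\le m$): when the running maximizer drops out of the window, $\ell$ can jump to an interior index whose own predecessor window still contains a much older peak. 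Consequently H1 at index $n_{i+1}-1$ only yields $a\,d_{i+1}^2\le Q_{i'}-Q_{i+1}$, not $Q_i-Q_{i+1}$; in the example above $a\|x^2-x^1\|^2$ is only bounded by $\Phi(x^0)-\Phi(x^2)=5$, whereas $Q_2-Q_3=1$, so your consecutive-decrease inequality can genuinely fail. With the corrected relation the recursion becomes $d_{i+1}\le\frac14 d_{i'}+\frac{b}{a}(\beta_{i'}-\beta_{i+1})$ with $i'$ ranging over $[i-m,i]$; since up to $m+1$ indices $i+1$ can share the same $i'$, summing produces a coefficient $\frac{m+1}{4}$ in front of $\sum_i d_i$, which is $\ge 1$ for $m\ge 3$ and the absorption fails. (It could be repaired by using $\sqrt{uv}\le\frac{u}{4(m+1)}+(m+1)v$ and bounding the shifted telescoping sum of the $\beta$'s, but that is not what you wrote.)

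The paper sidesteps this entirely: it never tries to relate consecutive peaks. It compares $\Xi_{k+m+1}$ with $\Xi_k$ for every $k$, using only that $\ell(k+m+1)-1\ge k$ and that $k\mapsto\Phi(x^{\ell(k)})$ is nonincreasing, so that $\Phi(x^{\ell(\ell(k+m+1)-1)})\le\Phi(x^{\ell(k)})$ and H1 gives $a\,\Xi_{k+m+1}^2\le\Phi(x^{\ell(k)})-\Phi(x^{\ell(k+m+1)})$. Combined with the KL inequality at $x^{\ell(k)}$ this yields $\Xi_{k+m+1}\le\frac14\Xi_k+\frac{b}{a}\big[\varphi(\Phi(x^{\ell(k)})-\Phi^*)-\varphi(\Phi(x^{\ell(k+m+1)})-\Phi^*)\big]$, and summing over all $k$ works because each index appears exactly once on either side and the $\varphi$-differences telescope along residue classes mod $m+1$ into $m+1$ finite boundary terms, leaving $\frac34\sum_j\Xi_j$ bounded. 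Your opening steps (the degenerate case via Lemma \ref{lemma1-Phi}(iv), the uniformized KL property, the monotonicity and bounded jumps of $\ell$, and the bound $\sum_k\Xi_k\le(m+1)\sum_i d_i$) are correct and match the paper's setup, but the core telescoping step needs to be replaced.
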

\begin{proof}
	By Lemma \ref{lemma1-Phi} (i), $\varpi(x^0)$ is a nonempty compact set. 
	By invoking \cite[Lemma 6]{Bolte14} with $\Omega=\varpi(x^0)$ and 
	Lemma \ref{lemma1-Phi} (iii), there exist $\delta>0$, $\eta>0$
	and a function $\varphi\in\Upsilon_{\!\eta}$ such that 
	for all $\overline{x}\in\Omega$ and
	all $x\in[\Phi(\overline{x})<\Phi<\Phi(\overline{x})+\eta]
	\cap\big\{z\in\mathbb{X}\,|\,{\rm dist}(z,\Omega)<\delta\big\}$, 
	$\varphi'(\Phi(x)-\Phi(\overline{x})){\rm dist}(0,\partial\Phi(x))\ge 1$.
	Pick a point $\widetilde{x}\in\varpi(x^0)$. Then, there exists a subsequence 
	$\{x^{k_j}\}_{j\in\mathbb{N}}$ such that $\lim_{j\to\infty}x^{k_j}=\widetilde{x}$. 
	Also, from the proof of Lemma \ref{lemma1-Phi} (iii), 
	$\lim_{j\to\infty}\Phi(x^{k_j})=\Phi(\widetilde{x})=\lim_{k\to\infty}\Phi(x^{k})=\Phi^*$. 
	
	\noindent
	{\bf Case 1: there exists $\overline{k}\in\mathbb{N}$ such that
		$\Phi(x^{\ell(\overline{k})})=\Phi^*$.} 
	From \eqref{lk-monotone} and $\lim_{k\to\infty}\Phi(x^{\ell(k)})=\Phi^*$,
	we have $\Phi(x^{\ell(k)})=\Phi^*$ for all $k\ge\overline{k}$.
	The result then follows by Lemma \ref{lemma1-Phi} (iv).
	
	\noindent
	{\bf Case 2: $\Phi(x^{\ell(k)})\ne\Phi^*$ for every $k\in\mathbb{N}$.}
	In this case, from \eqref{lk-monotone} and $\lim_{k\to\infty}\Phi(x^{\ell(k)})=\Phi^*$,
	$\Phi(x^{\ell(k)})>\Phi^*$ for every $k\in\mathbb{N}$ 
	and there exists $\widehat{k}\in\mathbb{N}$ such that 
	$\Phi(x^{\ell(k)})<\Phi^*+\eta$ for all $k\ge\widehat{k}$. 
	Since $\lim_{k\to\infty}{\rm dist}(x^k, \varpi(x^0))=0$, 
	for all $k\ge\widehat{k}$ (if necessary by increasing $\widehat{k}$), 
	${\rm dist}(x^k, \varpi(x^0))<\delta$. Thus, for all $k\ge\widehat{k}$, 
	$\varphi'(\Phi(x^{\ell(k)})-\Phi^*){\rm dist}(0,\partial\Phi(x^{\ell(k)}))\ge 1$,
	which along with condition H2 implies that 
	$b\Xi_k\varphi'\big(\Phi(x^{\ell(k)})-\Phi^*\big)\ge 1$. 
	By \eqref{lk-monotone} and the concavity of $\varphi$ on $[0,\eta)$, 
	for all $k\ge\widehat{k}$,
	\begin{equation}\label{Phi-ineq1}
		b\Xi_k\big[\varphi\big(\Phi(x^{\ell(k)})-\Phi^*\big)
		\!-\!\varphi\big(\Phi(x^{\ell(k+m+1)})\!-\!\Phi^*\big)\big]
		\ge\Phi(x^{\ell(k)})\!-\!\Phi(x^{\ell(k+m+1)}).
	\end{equation}
	In addition, from condition H1 and $\Phi(x^{\ell(\ell(k+m+1)-1)})\le\Phi(x^{\ell(k)})$,
	it follows that for each $k\in\mathbb{N}$,
	\begin{equation*}
		\Phi(x^{\ell(k+m+1)})\le\Phi(x^{\ell(k)})\!-\!a\|x^{\ell(k+m+1)}\!-\!x^{\ell(k+m+1)-1}\|^2.
	\end{equation*}
	From the last two inequalities, it is not hard to obtain that 
	for every $k\ge\widehat{k}$,
	\begin{equation*}
		\|x^{\ell(k+m+1)}\!-\!x^{\ell(k+m+1)-1}\|
		\le \sqrt{ba^{-1}\Xi_k[\varphi(\Phi(x^{\ell(k)})\!-\!\Phi^*)
			\!-\!\varphi(\Phi(x^{\ell(k+m+1)})\!-\!\Phi^*)]}.
	\end{equation*}
	By using Lemma \ref{lemma1-sequence} for the sequence
	$\{x^{\ell(k+m+1)-1}\}_{k\in\mathbb{N}}$ and $\mathcal{K}=\mathbb{N}$, 
	for any $\nu>k\ge\widehat{k}$,  
	\begin{equation}\label{rate-ineq1}
		({3}/{4}){\textstyle\sum_{j=k+m+1}^{\nu+m+1}}\,\Xi_j
		\le({1}/{4}){\textstyle\sum_{j=k}^{k+m}}\,\Xi_j
		+ba^{-1}{\textstyle\sum_{j=k}^{k+m}}
		\varphi\big(\Phi(x^{\ell(j)})\!-\!\Phi^*\big).
	\end{equation}
	By passing the limit $\nu\to\infty$ to the both sides of this inequality,
	we obtain $\sum_{k=1}^\infty\Xi_k<\infty$.
\end{proof}
\begin{theorem}\label{KL-converge}
	Let $\Phi$ be a KL function satisfying \eqref{Phi-cond1}-\eqref{Phi-cond2}.
	Suppose that
	\begin{equation}\label{assump0}
		\sum_{K_1\ni k=0}^{\infty}\!\sqrt{\Phi(x^{\ell(k+1)})\!-\!\Phi(x^{k+1})}<\infty
		\ \ {\rm when}\ 
		\liminf_{K_1\ni k\to\infty}\frac{\Phi(x^{\ell(k)})-\Phi(x^{\ell(k+1)})}{\|x^{k+1}-x^k\|^2}=0,
	\end{equation}
	where $K_1\!:=\!\big\{k\in\mathbb{N}\,|\,\Phi(x^{\ell(k+1)})-\Phi(x^{k+1})
	\ge\frac{a}{2}\|x^{k+1}\!-\!x^{k}\|^2\big\}$. 
	Then $\sum_{k=0}^\infty\|x^{k+1}\!-\!x^k\|<\infty$.
\end{theorem}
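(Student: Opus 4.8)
The plan is to estimate each step length through condition H1 and then partition $\mathbb{N}$ according to $K_1$. For each $k$ set $d_k:=\Phi(x^{\ell(k)})-\Phi(x^{\ell(k+1)})$ and $e_{k+1}:=\Phi(x^{\ell(k+1)})-\Phi(x^{k+1})$; both are nonnegative, the former by \eqref{lk-monotone} and the latter because $\ell(k+1)$ is an argmax over a window containing $k+1$. Condition H1 gives $a\|x^{k+1}-x^k\|^2\le\Phi(x^{\ell(k)})-\Phi(x^{k+1})=d_k+e_{k+1}$ for every $k$. If $k\notin K_1$, the definition of $K_1$ forces $e_{k+1}<\frac{a}{2}\|x^{k+1}-x^k\|^2$, so that $\frac{a}{2}\|x^{k+1}-x^k\|^2\le d_k$; if $k\in K_1$, then directly $\|x^{k+1}-x^k\|\le\sqrt{2a^{-1}(\Phi(x^{\ell(k+1)})-\Phi(x^{k+1}))}$. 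Hence it suffices to show that $\sum_{k\notin K_1}\|x^{k+1}-x^k\|<\infty$ and $\sum_{K_1\ni k}\|x^{k+1}-x^k\|<\infty$.

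For the first sum I would reuse the KL argument of Lemma \ref{lk-sequence}. As there, if $\Phi(x^{\ell(\overline{k})})=\Phi^*$ for some $\overline{k}$, then by \eqref{lk-monotone} $\Phi(x^{\ell(k)})\equiv\Phi^*$ for $k\ge\overline{k}$ and Lemma \ref{lemma1-Phi}(iv) makes the iterates eventually constant, so the whole series is finite. Otherwise $\Phi(x^{\ell(k)})>\Phi^*$ for all $k$, and for all large $k$ the KL inequality holds at $x^{\ell(k)}$; combined with H2, which yields ${\rm dist}(0,\partial\Phi(x^{\ell(k)}))\le b\Xi_k$, this gives $\varphi'(\Phi(x^{\ell(k)})-\Phi^*)\ge(b\Xi_k)^{-1}$. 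The concavity of $\varphi$ then produces the one-step estimate
\[
 d_k\le b\,\Xi_k\big[\varphi(\Phi(x^{\ell(k)})-\Phi^*)-\varphi(\Phi(x^{\ell(k+1)})-\Phi^*)\big]=:b\,\Xi_k\Delta\varphi_k,
\]
which is valid for all large $k$ irrespective of membership in $K_1$ (note $\Delta\varphi_k\ge0$ since $\varphi$ is increasing and $\Phi(x^{\ell(\cdot)})$ is nonincreasing). For $k\notin K_1$ this together with $\frac{a}{2}\|x^{k+1}-x^k\|^2\le d_k$ gives $\|x^{k+1}-x^k\|\le\sqrt{2a^{-1}b\,\Xi_k\Delta\varphi_k}$, and applying $\sqrt{st}\le\frac14 s+t$ with $s=\Xi_k$ yields $\|x^{k+1}-x^k\|\le\frac14\Xi_k+2a^{-1}b\,\Delta\varphi_k$. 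Since $\sum_k\Xi_k<\infty$ by Lemma \ref{lk-sequence} and $\sum_k\Delta\varphi_k$ telescopes to a finite quantity, the sum $\sum_{k\notin K_1}\|x^{k+1}-x^k\|$ is finite.

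It remains to bound $\sum_{K_1\ni k}\|x^{k+1}-x^k\|$, and here I would argue by a dichotomy on $L:=\liminf_{K_1\ni k\to\infty}d_k/\|x^{k+1}-x^k\|^2$ (with $K_1$ finite as a trivial subcase). If $L>0$, then for all large $k\in K_1$ one has $\|x^{k+1}-x^k\|^2\le 2L^{-1}d_k\le 2L^{-1}b\,\Xi_k\Delta\varphi_k$, so exactly the same $\sqrt{st}\le\frac14 s+t$ estimate shows the sum over $K_1$ is finite. If instead $L=0$, then the hypothesis of condition \eqref{assump0} is met, and its conclusion gives $\sum_{K_1\ni k}\sqrt{\Phi(x^{\ell(k+1)})-\Phi(x^{k+1})}<\infty$, whence $\sum_{K_1\ni k}\|x^{k+1}-x^k\|\le\sqrt{2a^{-1}}\sum_{K_1\ni k}\sqrt{\Phi(x^{\ell(k+1)})-\Phi(x^{k+1})}<\infty$. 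Combining the two partial sums yields $\sum_{k=0}^\infty\|x^{k+1}-x^k\|<\infty$.

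The main obstacle is the nonmonotone gap $e_{k+1}=\Phi(x^{\ell(k+1)})-\Phi(x^{k+1})$, which has no counterpart in the monotone analysis and which the $\Xi_k$-summability of Lemma \ref{lk-sequence} does not control on its own. The key observation making the scheme work is that the KL--concavity estimate for $d_k$ is independent of $K_1$, so it already disposes of the subcase $L>0$; this isolates precisely the subcase $L=0$, in which the decrease of the running maximum is asymptotically negligible relative to the step length, as the only situation requiring an extra hypothesis—and this is exactly what the conditional form of \eqref{assump0} is designed to supply.
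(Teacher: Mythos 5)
Your proof is correct, and it uses the same toolkit as the paper's: the reduction to the case $\Phi(x^{\ell(k)})>\Phi^*$ via Lemma \ref{lemma1-Phi}(iv), the KL--concavity estimate $\Phi(x^{\ell(k)})-\Phi(x^{\ell(k+1)})\le b\,\Xi_k\big[\varphi(\Phi(x^{\ell(k)})-\Phi^*)-\varphi(\Phi(x^{\ell(k+1)})-\Phi^*)\big]$ (the paper's \eqref{Phi-ineq1} with $m=0$), the $\sqrt{st}\le\frac14 s+t$ device of Lemma \ref{lemma1-sequence}, the summability of $\Xi_k$ from Lemma \ref{lk-sequence}, and the direct bound $\|x^{k+1}-x^k\|\le\sqrt{2a^{-1}(\Phi(x^{\ell(k+1)})-\Phi(x^{k+1}))}$ on $K_1$. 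Where you genuinely differ is the case analysis. The paper splits on whether $\liminf_{k\to\infty}\frac{\Phi(x^{\ell(k)})-\Phi(x^{\ell(k+1)})}{\|x^{k+1}-x^k\|^2}$ is positive or zero over \emph{all} indices, handling the whole sum at once when it is positive and separating $\overline{K}_1$ from $K_1$ (with an appeal to \eqref{assump0}) when it is zero. You instead bound the $\overline{K}_1$-part unconditionally, via $\frac{a}{2}\|x^{k+1}-x^k\|^2\le\Phi(x^{\ell(k)})-\Phi(x^{\ell(k+1)})$ there, and place the dichotomy on $L=\liminf_{K_1\ni k\to\infty}\frac{\Phi(x^{\ell(k)})-\Phi(x^{\ell(k+1)})}{\|x^{k+1}-x^k\|^2}$, which is exactly the quantity appearing in the hypothesis of \eqref{assump0}. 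This buys a real gain in precision: in the paper's Case 2 the unrestricted liminf being zero does not force the liminf along $K_1$ to be zero, so the closing appeal to ``the given assumption'' is not literally covered by \eqref{assump0} as stated, whereas your $L>0$ subcase disposes of $K_1$ through the KL bound on $\Phi(x^{\ell(k)})-\Phi(x^{\ell(k+1)})$ without any extra hypothesis, leaving only $L=0$, where \eqref{assump0} applies verbatim. Two harmless points of bookkeeping: the liminf should be understood over indices with $x^{k+1}\ne x^k$ (zero steps contribute nothing to the sum), and the $L>0$ branch should be read as covering $L=+\infty$ and $K_1$ finite, both of which your argument handles trivially.
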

\begin{proof}
	By the proof Lemma \ref{lk-sequence}, it suffices to consider that
	$\Phi(x^{\ell(k)})\ne\Phi^*$ for every $k\in\mathbb{N}$.
	Now inequality \eqref{Phi-ineq1} holds with $m=0$ for all $k\ge\widehat{k}$.
	We proceed the arguments by two cases.
	
	\noindent
	{\bf Case 1: ${\displaystyle\liminf_{k\to\infty}}\frac{\Phi(x^{\ell(k)})-\Phi(x^{\ell(k+1)})}
		{\|x^{k+1}-x^k\|^2}>0$.}
	Now there exists $\gamma>0$ such that for all $k\ge\widehat{k}$ 
	(if necessary by increasing $\widehat{k}$), 
	$\Phi(x^{\ell(k)})-\Phi(x^{\ell(k+1)})\ge\gamma\|x^{k+1}\!-\!x^k\|^2$.
	Along with \eqref{Phi-ineq1} with $m=0$, for all $k\ge\widehat{k}$,
	\begin{equation*}
		\|x^{k+1}\!-\!x^k\|
		\le\sqrt{b\gamma^{-1}\Xi_k[\varphi(\Phi(x^{\ell(k)})\!-\!\Phi^*)
			-\varphi(\Phi(x^{\ell(k+1)})\!-\!\Phi^*)]}.
	\end{equation*}
	By invoking Lemma \ref{lemma1-sequence} for the sequence
	$\{x^{k}\}_{k\in\mathbb{N}}$ and $\mathcal{K}=\mathbb{N}$, 
	it then follows that 
	\[
	{\textstyle\sum_{k=\widehat{k}}^{\nu}}\,\|x^{k+1}\!-\!x^k\|
	<(1/4){\textstyle\sum_{k=\widehat{k}}^\nu}\,\Xi_k
	+b\gamma^{-1}\varphi(\Phi(x^{\ell(\widehat{k})})\!-\!\Phi^*).
	\]
	Passing the limit $\nu\to\infty$ to the both sides and 
	using Lemma \ref{lk-sequence} yields the result.
	
	\noindent
	{\bf Case 2: ${\displaystyle\liminf_{k\to\infty}}
		\frac{\Phi(x^{\ell(k)})-\Phi(x^{\ell(k+1)})}{\|x^{k+1}-x^k\|^2}=0$.}
	From \eqref{Phi-ineq1} for $m=0$ and condition H1, for all $k\ge\widehat{k}$,
	\begin{align*}
		a\|x^{k+1}\!-\!x^k\|^2+\Phi(x^{k+1})\!-\!\Phi(x^{\ell(k+1)})
		\le b\Xi_k[\varphi(\Phi(x^{\ell(k)})\!-\!\Phi^*)
		\!-\!\varphi(\Phi(x^{\ell(k+1)})\!-\!\Phi^*)].
	\end{align*}
	Suppose that $\overline{K}_1\!:=\mathbb{N}\backslash K_1$
	is an infinite set. For all $\overline{K}_1\ni k>\widehat{k}$ 
	(if necessary by increasing $\widehat{k}$),
	$\Phi(x^{k+1})\!-\!\Phi(x^{\ell(k+1)})>\!-\frac{a}{2}\|x^{k+1}\!-\!x^{k}\|^2$,
	which along with the last inequality implies that
	\begin{equation*}
		\|x^{k+1}\!-\!x^k\|
		<\sqrt{2ba^{-1}\Xi_k[\varphi(\Phi(x^{\ell(k)})\!-\!\Phi^*)
			-\varphi(\Phi(x^{\ell(k+1)})\!-\!\Phi^*)]}.
	\end{equation*}
	By using Lemma \ref{lemma1-sequence} for the sequence
	$\{x^{k}\}_{k\in\mathbb{N}}$ and $\mathcal{K}=\overline{K}_1$, 
	it follows that for any $\nu>k\ge\widehat{k}$,   
	\begin{equation}\label{temp-ineq1}
		{\textstyle\sum_{\overline{K}_1\ni j=k}^{\nu}}\,\big\|x^{j+1}\!-\!x^j\big\|
		\le(1/2){\textstyle\sum_{j=k}^{\nu}}\,\Xi_j
		+ba^{-1}\varphi(\Phi(x^{\ell(k)})\!-\!\Phi^*).
	\end{equation}
	If $\overline{K}_1$ is a finite set, then $j\notin\overline{K}_1$ 
	for all $j>\widehat{k}$ (if necessary by increasing $\widehat{k}$), 
	and inequality \eqref{temp-ineq2} holds automatically. 
	Next we consider that $K_1$ is an infinite set. Obviously, for any $\nu>k\ge\widehat{k}$,
	\begin{equation}\label{temp-ineq2}
		{\textstyle\sum_{K_1\ni j=k}^{\nu}}\|x^{j+1}\!-\!x^{j}\|
		\le\sqrt{2a^{-1}}{\textstyle\sum_{K_1\ni j=k}^{\nu}}\sqrt{\Phi(x^{\ell(j+1)})-\Phi(x^{j+1})},
	\end{equation}
	which holds automatically if $K_1$ is a finite set. 
	For any $\nu>k\ge\widehat{k}$, adding \eqref{temp-ineq1} to \eqref{temp-ineq2} yields that
	\[
	\sum_{j=k}^{\nu}\|x^{j+1}\!-\!x^{j}\|
	\le\frac{1}{2}\sum_{j=k}^{\nu}\Xi_j
	+\frac{b\varphi(\Phi(x^{\ell(k)})\!-\!\Phi^*)}{a}
	+\!\sqrt{\frac{2}{a}}\!\sum_{K_1\ni j=k}^{\nu}\!\sqrt{\Phi(x^{\ell(j+1)})-\Phi(x^{j+1})}.
	\]
	Passing the limit $\nu\to\infty$ and using Lemma \ref{lk-sequence} 
	and the given assumption yields the result.
\end{proof}

	Now we take a closer look at condition \eqref{assump0}. We first show that 
	it is sufficient and necessary for $\sum_{k=0}^{\infty}\|x^{k+1}-x^k\|<\infty$ 
	if $\Phi$ is a KL function that is weakly convex on a neighborhood of $\omega(x^0)$.
	\begin{proposition}\label{SN-cond1}
		Suppose that $\Phi$ is a KL function satisfying \eqref{Phi-cond1}-\eqref{Phi-cond2} 
		and having a $\rho$-weak convexity on a neighborhood of $\omega(x^0)$. 
		Then, $\sum_{k=0}^{\infty}\|x^{k+1}-x^k\|<\infty$ iff
		condition \eqref{assump0} holds.
	\end{proposition}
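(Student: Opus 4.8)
The plan is to establish the two implications separately, noting that one of them is already in hand. For the direction ``$\Leftarrow$'', i.e.\ that \eqref{assump0} implies $\sum_{k=0}^\infty\|x^{k+1}-x^k\|<\infty$, nothing new is required: this is precisely the conclusion of Theorem \ref{KL-converge}, which holds under \eqref{Phi-cond1}--\eqref{Phi-cond2} and \eqref{assump0} \emph{without} any weak-convexity hypothesis. Thus the $\rho$-weak convexity is used only for the converse, and the entire work lies in showing that $\sum_{k=0}^\infty\|x^{k+1}-x^k\|<\infty$ forces \eqref{assump0}.

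For ``$\Rightarrow$'', I would assume $\sum_{k=0}^\infty\|x^{k+1}-x^k\|<\infty$ and argue that the \emph{conclusion} in \eqref{assump0}, namely $\sum_{K_1\ni k}\sqrt{\Phi(x^{\ell(k+1)})-\Phi(x^{k+1})}<\infty$, in fact holds unconditionally; this renders the implication defining \eqref{assump0} true irrespective of whether its liminf premise is met. (Note the radicand is nonnegative since $\ell(k+1)$ is a maximizing index.) The engine of the estimate is the weak convexity. Because $\mathrm{dist}(x^k,\omega(x^0))\to 0$ and $\|x^{k+1}-x^k\|\to 0$ by Lemma \ref{lemma1-Phi}(ii), for all large $k$ the points $x^{\ell(k+1)}$ and $x^{k+1}$ and the segment joining them lie in the neighborhood of $\omega(x^0)$ on which $\Phi+\tfrac{\rho}{2}\|\cdot\|^2$ is convex. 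The subgradient inequality for weakly convex functions, applied with the subgradient $w^{\ell(k+1)}\in\partial\Phi(x^{\ell(k+1)})$ furnished by H2 (so $\|w^{\ell(k+1)}\|\le b\,\Xi_{k+1}$), then gives, for all large $k$,
\begin{equation*}
\Phi(x^{\ell(k+1)})-\Phi(x^{k+1})\le b\,\Xi_{k+1}\,\|x^{k+1}-x^{\ell(k+1)}\|+\tfrac{\rho}{2}\|x^{k+1}-x^{\ell(k+1)}\|^2 .
\end{equation*}

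Next I would bound the two terms by quantities already known to be summable. Since $\ell(k+1)\in\{k\!+\!1\!-\!m,\ldots,k\!+\!1\}$, one has $\|x^{k+1}-x^{\ell(k+1)}\|\le\sum_{i=\ell(k+1)}^{k}\|x^{i+1}-x^i\|$, a sum of at most $m$ consecutive increments, so summing over $k$ costs at most a factor $m\!+\!1$ and remains finite under the standing hypothesis $\sum_k\|x^{k+1}-x^k\|<\infty$. Taking square roots in the displayed bound via $\sqrt{s+t}\le\sqrt s+\sqrt t$ and $\sqrt{st}\le\tfrac12(s+t)$ reduces $\sqrt{\Phi(x^{\ell(k+1)})-\Phi(x^{k+1})}$ to a linear combination of $\Xi_{k+1}$ and $\|x^{k+1}-x^{\ell(k+1)}\|$. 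Summing over $k$ and invoking both $\sum_k\Xi_k<\infty$ (Lemma \ref{lk-sequence}, available because $\Phi$ is a KL function satisfying \eqref{Phi-cond1}--\eqref{Phi-cond2}) and $\sum_k\|x^{k+1}-x^k\|<\infty$ yields $\sum_{k}\sqrt{\Phi(x^{\ell(k+1)})-\Phi(x^{k+1})}<\infty$, whose subsum over $K_1$ is a fortiori finite; hence \eqref{assump0} holds.

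The step I expect to be the main obstacle is the rigorous justification that the weak-convexity subgradient inequality may be invoked for the pair $(x^{\ell(k+1)},x^{k+1})$: one must confine not merely these two iterates but the \emph{whole segment} between them to the weakly convex neighborhood. This is exactly where the combination $\mathrm{dist}(x^k,\omega(x^0))\to0$ and $\|x^{k+1}-x^k\|\to0$ is essential, since $\ell(k+1)$ trails $k+1$ by at most $m$ steps while both endpoints sit within a vanishing distance of $\omega(x^0)$; only finitely many early indices are excluded, and these do not affect summability. A secondary care point is the subgradient calculus identity $\partial(\Phi+\tfrac{\rho}{2}\|\cdot\|^2)=\partial\Phi+\rho(\cdot)$ underlying the inequality, which is licit here because weak convexity forces $\widehat{\partial}\Phi=\partial\Phi$ on the neighborhood.
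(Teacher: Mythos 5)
Your proposal is correct and follows essentially the same route as the paper's proof: sufficiency is delegated to Theorem \ref{KL-converge}, and necessity is obtained from the weak-convexity subgradient inequality at $x^{\ell(k)}$ combined with the H2 bound $\|w^{\ell(k)}\|\le b\,\Xi_k$, controlling $\|x^{k}-x^{\ell(k)}\|$ by at most $m$ consecutive increments, and summing via Lemma \ref{lk-sequence} together with the hypothesis $\sum_k\|x^{k+1}-x^k\|<\infty$. The only cosmetic difference is that you take square roots first and use $\sqrt{st}\le\tfrac12(s+t)$, whereas the paper applies Young's inequality to the inner product before extracting the root.
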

	\begin{proof}
		By Theorem \ref{KL-converge}, it suffices to prove the necessity. 
		Suppose that $\sum_{k=0}^{\infty}\|x^{k+1}-x^k\|<\infty$. Let 
		$\mathcal{N}_0$ be a neighborhood of $\omega(x^0)$ such that 
		$\Phi$ is $\rho$-weakly convex on $\mathcal{N}_0$. 
		From Lemma \ref{lemma1-Phi} (ii), we have
		$\lim_{k\to\infty}\|x^{k}-x^{\ell(k)}\|=0$. Note that 
		$\lim_{k\to\infty}{\rm dist}(x^k,\omega(x^0))=0$. It is easy to
		argue that there exists $\widehat{k}_1\ge m$ such that 
		for all $k\ge\widehat{k}_1$, $x^{k},x^{l(k)}\in\mathcal{N}_0$. 
		Then, for each $k\ge\widehat{k}_1$ and $w^{\ell(k)}\in\partial\Phi(x^{\ell(k)})$,
		\begin{equation*}
			\Phi(x^{\ell(k)})\!-\!\Phi(x^k)
			\le\langle w^{\ell(k)},x^{\ell(k)}\!-\!x^k\rangle+\frac{\rho}{2}\|x^k\!-\!x^{\ell(k)}\|^2
			\le\frac{b^2}{2}\|x^{\ell(k)}\!-\!x^{\ell(k)-1}\|^2
			+\frac{\rho\!+\!1}{2}\|x^k\!-\!x^{\ell(k)}\|^2,
		\end{equation*}
		where the last inequality is using condition H2. 
		Together with the definitions of $\ell(k)$ and $\Xi_k$, 
		$\sqrt{\Phi(x^{\ell(k)})\!-\!\Phi(x^k)}\le\sqrt{b^2/2}\,\Xi_k
		+\sqrt{(\rho\!+\!1)/2}\sum_{j=k-\widehat{k}_1}^{k-1}
		\big\|x^{j+1}\!-\!x^{j}\big\|$. 
		Then, for each $\nu>\widehat{k}_1$, 
		\begin{equation}\label{temp-ineq3}
			{\textstyle\sum_{k=\widehat{k}_1}^{\nu}}\sqrt{\Phi(x^{\ell(k)})-\Phi(x^k)}
			\le \sqrt{b^2/2}\,{\textstyle\sum_{k=\widehat{k}_1}^{\nu}}\Xi_k+\sqrt{(\rho\!+\!1)/2}\,
			{\textstyle\sum_{k=\widehat{k}_1}^{\nu}\sum_{j=k-\widehat{k}_1}^{k-1}}
			\big\|x^{j+1}\!-\!x^{j}\big\|.
		\end{equation}
		Passing the limit $\nu\to\infty$ to this inequality and using 
		Lemma \ref{lk-sequence} and $\sum_{k=0}^{\infty}\|x^{k+1}-x^k\|<\infty$ yields that
		$\sum_{k=\widehat{k}_1}^{\infty}\!\sqrt{\Phi(x^{\ell(k)})-\Phi(x^k)} <\infty$. 
		That is, condition \eqref{assump0} holds.
	\end{proof}
	
	Next we provide a condition independent of $\{x^k\}_{k\in\mathbb{N}}$ 
	to ensure that \eqref{assump0} holds. This condition is satisfied by a class of 
	$\rho$-weakly convex functions with $\rho\le\frac{a}{2(m+1)^2}$ and so
	by convex functions. 
	\begin{lemma}\label{Scond1-prop}
		If $\Phi$ is a KL function satisfying \eqref{Phi-cond1}-\eqref{Phi-cond2}, 
		then condition \eqref{assump0} holds whenever there exist a constant 
		$a_0\in[0,\frac{a}{8(m+1)^2}]$ and a neighborhood 
		of $\omega(x^0)$, denoted by $\mathcal{N}_0$, such that 
		\begin{equation}\label{scond}
			{\rm dist}(0,\partial\Phi(z))\|y-z\|+a_0\|y-z\|^2\ge\Phi(z)-\Phi(y)
			\quad\forall y,z\in\mathcal{N}_0\backslash\omega(x^0).
		\end{equation}
	\end{lemma}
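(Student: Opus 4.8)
My plan is to prove the stronger, unconditional statement $\sum_{K_1}\sqrt{\Phi(x^{\ell(k+1)})-\Phi(x^{k+1})}<\infty$, from which \eqref{assump0} follows at once. Write $D_k:=\Phi(x^{\ell(k+1)})-\Phi(x^{k+1})\ (\ge 0)$. As in the proof of Lemma \ref{lk-sequence}, I first dispose of the case where $\Phi(x^{\ell(\overline{k})})=\Phi^*$ for some $\overline{k}$: by \eqref{lk-monotone} this forces $\Phi(x^{\ell(k)})=\Phi^*$ for all $k\ge\overline{k}$, so by Lemma \ref{lemma1-Phi} (iv) the iterates are eventually constant, $D_k$ eventually vanishes, and the sum is trivially finite. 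Hence I may assume $\Phi(x^{\ell(k)})>\Phi^*$ for all $k$; since $\Phi\equiv\Phi^*$ on $\omega(x^0)$ by Lemma \ref{lemma1-Phi} (iii), this gives $x^{\ell(k+1)}\notin\omega(x^0)$, and by Lemma \ref{lemma1-Phi} (ii) together with ${\rm dist}(x^k,\omega(x^0))\to 0$ all $x^k$ with $k$ large lie in $\mathcal{N}_0$.

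\emph{Core per-index estimate.} Fix a large $k\in K_1$ and apply \eqref{scond} with $z=x^{\ell(k+1)}$ and $y=x^{k+1}$ (if $x^{k+1}\in\omega(x^0)$, one extends \eqref{scond} to this $y$ by a routine limiting argument using the lower semicontinuity of $\Phi$). By condition H2 there is $w^{\ell(k+1)}\in\partial\Phi(x^{\ell(k+1)})$ with $\|w^{\ell(k+1)}\|\le b\Xi_{k+1}$, so ${\rm dist}(0,\partial\Phi(x^{\ell(k+1)}))\le b\Xi_{k+1}$. Setting $p_k:=\|x^{k+1}-x^{\ell(k+1)}\|\le\sum_{j=\ell(k+1)}^{k}\|x^{j+1}-x^j\|$, which has at most $m$ summands because $\ell(k+1)\ge k+1-m$, inequality \eqref{scond} becomes $D_k\le b\Xi_{k+1}p_k+a_0p_k^2$. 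Taking square roots and using $\sqrt{uv}\le(2t)^{-1}u+(t/2)v$ with $u=b\Xi_{k+1},\ v=p_k$ gives, for every $t>0$, $\sqrt{D_k}\le\frac{b}{2t}\Xi_{k+1}+\big(\tfrac{t}{2}+\sqrt{a_0}\big)p_k$.

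\emph{Summation and absorption.} Summing over $K_1\ni k$ with $\widehat{k}\le k\le\nu$, and noting that each $\|x^{j+1}-x^j\|$ appears in at most $m$ of the sums $p_k$, I obtain $\sum_{K_1\ni k=\widehat{k}}^{\nu}\sqrt{D_k}\le\frac{b}{2t}\sum_{k}\Xi_{k+1}+\big(\tfrac{t}{2}+\sqrt{a_0}\big)m\sum_{j}\|x^{j+1}-x^j\|$, where the last sum runs over a range differing from $[\widehat{k},\nu]$ by finitely many indices. I split this last sum into its $\overline{K}_1$- and $K_1$-parts. The $\overline{K}_1$-part is finite: the derivation of \eqref{temp-ineq1} in the proof of Theorem \ref{KL-converge} is independent of \eqref{assump0} and, combined with Lemma \ref{lk-sequence}, yields $\sum_{\overline{K}_1}\|x^{j+1}-x^j\|<\infty$. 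For the $K_1$-part, the defining inequality of $K_1$ gives $\|x^{k+1}-x^k\|\le\sqrt{2/a}\,\sqrt{D_k}$, so it is at most $\sqrt{2/a}\sum_{K_1\ni k\le\nu}\sqrt{D_k}$ up to finitely many fixed terms. Writing $A_\nu:=\sum_{K_1\ni k=\widehat{k}}^{\nu}\sqrt{D_k}$ and collecting the estimates, there is $C<\infty$ independent of $\nu$ with $A_\nu\le C+\kappa A_\nu$, where $\kappa:=\big(\tfrac{t}{2}+\sqrt{a_0}\big)m\sqrt{2/a}$.

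The decisive point is that $\kappa<1$. Letting $t\downarrow 0$, $\kappa\to m\sqrt{2a_0/a}$, and the hypothesis $a_0\le\frac{a}{8(m+1)^2}$ gives $\sqrt{2a_0/a}\le\frac{1}{2(m+1)}$, whence $m\sqrt{2a_0/a}\le\frac{m}{2(m+1)}<\frac12$; thus for $t$ small enough $\kappa<1$. Then $A_\nu\le C/(1-\kappa)$ for every $\nu$, so $\sum_{K_1}\sqrt{D_k}<\infty$ and \eqref{assump0} holds. \textbf{The main obstacle} is the self-referential nature of the estimate: \eqref{scond} bounds $D_k$ through the local step lengths $p_k$, which on $K_1$ are themselves controlled by $\sqrt{D_k}$, producing the term $\kappa A_\nu$ on the right; the quantitative bound $a_0\le\frac{a}{8(m+1)^2}$ is exactly what forces the feedback coefficient $\kappa$ below $1$ so that $A_\nu$ can be absorbed. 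A secondary technical nuisance is the exclusion of $\omega(x^0)$ in \eqref{scond}, handled by the limiting argument noted above.
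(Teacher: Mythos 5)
Your argument is essentially the paper's own proof: both establish the unconditional bound $\sum_{K_1\ni k}\sqrt{\Phi(x^{\ell(k+1)})-\Phi(x^{k+1})}<\infty$ by applying \eqref{scond} with $z=x^{\ell(k+1)}$ and $y=x^{k+1}$, bounding ${\rm dist}(0,\partial\Phi(x^{\ell(k+1)}))$ through condition H2, splitting the square root by an AM--GM inequality, summing with a multiplicity count over the window $\{\ell(k+1),\dots,k\}$, separating the $K_1$ and $\overline{K}_1$ contributions via \eqref{temp-ineq1} and Lemma \ref{lk-sequence}, and absorbing the resulting $K_1$ feedback term, which is precisely what the bound $a_0\le\frac{a}{8(m+1)^2}$ makes possible (your free parameter $t$ plays the role of the paper's fixed weights $a_1$). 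The one point to watch is your side remark that \eqref{scond} extends to $y=x^{k+1}\in\omega(x^0)$ ``by lower semicontinuity'': lsc bounds $\liminf_n\Phi(y_n)$ from below, which is the wrong direction for passing $\Phi(z)-\Phi(y_n)\le\mathrm{LHS}$ to the limit, although the paper glosses over the same corner case by simply asserting $x^{k},x^{\ell(k)}\in\mathcal{N}_0\backslash\omega(x^0)$ for all large $k$.
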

	\begin{proof}
		It suffices to consider that $K_1$ is an infinite set and 
		$\Phi(x^{\ell(k)})\ne\Phi^*$ for every $k\in K_1$.
		By the proof of Lemma \ref{SN-cond1}, there exists $\widehat{k}_1\ge m$ such that 
		for all $k\ge\widehat{k}_1$, $x^{k},x^{l(k)}\in\mathcal{N}_0\backslash\omega(x^0)$. 
		For each $K_1\ni j\ge\widehat{k}_1$, let $w^{\ell(j+1)}\in\partial\Phi(x^{\ell(j+1)})$ 
		be such that ${\rm dist}(0,\partial\Phi(x^{\ell(j+1)}))=\|w^{\ell(j+1)}\|$. 
		Together with condition H2 and the condition in \eqref{scond}, 
		for each $j\ge\widehat{k}_1$, it holds that 
		\begin{align*}
			\sqrt{\Phi(x^{\ell(j+1)})-\Phi(x^{j+1})}
			&\le\sqrt{b\|x^{\ell(j+1)}\!-\!x^{\ell(j+1)-1}\|\|x^{\ell(j+1)}\!-\!x^{j+1}\|}
			\!+\!\sqrt{\frac{a}{8(m\!+\!1)^2}}\|x^{j+1}\!-\!x^{\ell(j+1)}\|\\
			&\le \frac{\sqrt{2}(m\!+\!1)^2b}{\sqrt{a}}\,\Xi_{j+1}
			+\Big[\frac{\sqrt{a}}{4\sqrt{2}(m\!+\!1)^2}\!+\!\sqrt{\frac{a}{8(m\!+\!1)^2}}\Big]
			\sum_{l=j-m}^{j}\|x^{l+1}\!-\!x^{l}\|.
		\end{align*}
		For any $K_1\!\ni\nu>k\!\ge\widehat{k}_1$, summing the last inequality from 
		$j=k$ to $\nu$ yields that 
		\begin{align}\label{temp-ineq4}
			\sum_{K_1\ni j=k}^{\nu}\!\sqrt{\Phi(x^{\ell(j+1)})-\Phi(x^{j+1})}
			\le\frac{\sqrt{2}(m\!+\!1)^2b}{\sqrt{a}}\sum_{j=k}^{\nu}\!\Xi_{j+1}
			+a_1\sum_{K_1\ni j=k}^{\nu}\sum_{l=j-m}^{j}\|x^{l+1}\!-\!x^{l}\|.
		\end{align}
		with $a_1=\!\frac{\sqrt{a}}{4\sqrt{2}(m+1)^2}\!+\!\sqrt{\frac{a}{8(m+1)^2}}$. 
		Note that $\sum_{l=j-m}^{j}\|x^{l+1}\!-\!x^{l}\|= 
		\sum_{K_1\ni l=j-m}^{j}\|x^{l+1}\!-\!x^{l}\|
		+\sum_{\overline{K}_1\ni l=j-m}^{j}\|x^{l+1}\!-\!x^{l}\|$, 
		while 
		$\sum_{K_1\ni j=k}^{\nu}\sum_{K_1\ni l=j-m}^{j}\|x^{l+1}\!-\!x^{l}\|
		\le(m\!+\!1)\!\sum_{K_1\ni j=k-m}^{\nu}\|x^{j+1}\!-\!x^{j}\|$ and 
		$\sum_{K_1\ni j=k}^{\nu}\sum_{\overline{K}_1\ni l=j-m}^{j}\|x^{l+1}\!-\!x^{l}\|
		\le(m\!+\!1)\!\sum_{\overline{K}_1\ni j=k-m}^{\nu}\|x^{j+1}\!-\!x^{j}\|$. 
		Then, together with $\|x^{j+1}\!-\!x^j\|\le
		\!\sqrt{\frac{2}{a}}\sqrt{\Phi(x^{\ell(j+1)})\!-\!\Phi(x^{j+1})}$ 
		for each $j\in K_1$ and inequalities \eqref{temp-ineq4} and \eqref{temp-ineq1}, 
		\begin{align*}
			\frac{2m\!+\!1}{4(m\!+\!1)}
			\!\sum_{K_1\ni j=k}^{\nu}\!\sqrt{\Phi(x^{\ell(j+1)})-\Phi(x^{j+1})}
			&\le\!\frac{\sqrt{2}(m\!+\!1)^2b}{\sqrt{a}}\sum_{j=k}^{\nu}\!\Xi_{j+1}
			+a_1(m\!+\!1)\sum_{j=k-m}^{k-1}\|x^{j+1}-x^j\|\\ 
			&\quad\!+\!\frac{a_1(m+1)}{2}\Big[\sum_{j=k}^{\nu}\,\Xi_j
			\!+\!2ba^{-1}\varphi(\Phi(x^{\ell(k)})\!-\!\Phi^*)\Big].
		\end{align*}
		This by Lemma \ref{lk-sequence} implies that condition \eqref{assump0} holds.
		The proof is completed.
	\end{proof} 

\subsection{Convergence rate}\label{sec3.2}

In this subsection, we establish the convergence rate of $\{x^k\}_{k\in\mathbb{N}}$ 
under the assumption that $\Phi$ is a KL function associated to 
$\varphi(t):=ct^{1-\theta}$ for $t\in[0,\infty)$ with $\theta\in[0,1)$ 
and $c>0$. For this purpose, we need the following two technical lemmas.
\begin{lemma}\label{lemma-sequence}
	Let $\{\Gamma_{\!l}\}_{l\in\mathbb{N}}$ be a nonnegative nonincreasing sequence 
	such that for all $l\ge\overline{l}$ with some $\overline{l}\in\mathbb{N}$,
	$\Gamma_{\!l}\le C\max\big\{l^{\frac{1-\theta}{1-2\theta}},
	(\Gamma_{\!l-m-1}\!-\!\Gamma_{\!l})^{\frac{1-\theta}{\theta}}\big\}$, 
	where $\theta\in(1/2,1)$ and $C>0$ are the constants. 
	Then, there exists $\widetilde{\gamma}_1>0$ such that for all $l\ge\overline{l}$, 
	$\Gamma_{\!l}\le\max\big\{C,\widetilde{\gamma}_1^{\frac{1}{\mu}}\big\}
	(\frac{l-\overline{l}}{m+2})^{\frac{1}{\mu}}$ 
	with $\mu:=\frac{1-2\theta}{1-\theta}$.  
\end{lemma}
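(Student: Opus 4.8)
The plan is to transform the nonincreasing sequence $\{\Gamma_l\}$ into one that grows at least linearly, and then invert. Set $\alpha:=\frac{\theta}{1-\theta}$, so that $\alpha>1$ and $1-\alpha=\mu<0$, and introduce $\psi_l:=\Gamma_l^{\mu}$. Since $\Gamma$ is nonincreasing and $s\mapsto s^{\mu}$ is decreasing for $s>0$, the sequence $\{\psi_l\}$ is nondecreasing; if $\Gamma_l=0$ for some $l$ then $\psi_l=+\infty$ and the asserted bound is trivial, so I may assume $\Gamma_l>0$ for all $l$, whence $\psi_l\in(0,\infty)$. Rewriting the hypothesis, for each $l\ge\overline{l}$ one of two alternatives holds: (Case A) $\Gamma_l\le C l^{1/\mu}$, or (Case B) $\Gamma_l^{\alpha}\le C^{\alpha}(\Gamma_{l-m-1}-\Gamma_l)$. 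The whole proof reduces to showing that $\psi_l$ grows linearly in $l$, because the target inequality $\Gamma_l\le K(\frac{l-\overline{l}}{m+2})^{1/\mu}$ with $K:=\max\{C,\widetilde{\gamma}_1^{1/\mu}\}$ is, after applying the decreasing map $s\mapsto s^{\mu}$ and using $(l^{1/\mu})^{\mu}=l$, equivalent to $\psi_l\ge K^{\mu}\frac{l-\overline{l}}{m+2}$.

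Next I would record what each alternative gives. In Case A, applying $s\mapsto s^{\mu}$ to $\Gamma_l\le C l^{1/\mu}$ yields $\psi_l\ge C^{\mu}l$, i.e.\ the linear lower bound holds outright at such $l$. In Case B, I would use the convexity of $\varphi(s):=s^{\mu}$ on $(0,\infty)$ (here $\varphi''>0$ since $\mu(\mu-1)>0$) to obtain
\[
\psi_l-\psi_{l-m-1}\ge\varphi'(\Gamma_{l-m-1})(\Gamma_l-\Gamma_{l-m-1})=|\mu|\,\Gamma_{l-m-1}^{\mu-1}(\Gamma_{l-m-1}-\Gamma_l),
\]
and then bound $\Gamma_{l-m-1}-\Gamma_l\ge C^{-\alpha}\Gamma_l^{\alpha}$ from Case B. Since $\mu-1=-\alpha$, this collapses to $\psi_l-\psi_{l-m-1}\ge|\mu|C^{-\alpha}(\Gamma_l/\Gamma_{l-m-1})^{\alpha}$.

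The key step is to convert this into a uniform additive gain $\psi_l-\psi_{l-m-1}\ge\delta>0$, which I would do by a dichotomy on the ratio $r_l:=\Gamma_l/\Gamma_{l-m-1}\in(0,1]$ against a fixed threshold $q\in(0,1)$. If $r_l\ge q$, the estimate above already gives $\psi_l-\psi_{l-m-1}\ge|\mu|C^{-\alpha}q^{\alpha}$. If $r_l<q$, then $\Gamma_l<q\Gamma_{l-m-1}$ forces $\psi_l>q^{\mu}\psi_{l-m-1}$ with $q^{\mu}>1$, so using $\psi_{l-m-1}\ge\psi_{\overline{l}}=\Gamma_{\overline{l}}^{\mu}>0$ (valid once $l\ge\overline{l}+m+1$, by monotonicity of $\psi$) yields $\psi_l-\psi_{l-m-1}>(q^{\mu}-1)\Gamma_{\overline{l}}^{\mu}$. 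Taking $\delta$ to be the minimum of these two positive constants settles both branches. I expect this branch to be the main obstacle: when the per-step decrease $\Gamma_{l-m-1}-\Gamma_l$ is negligible relative to $\Gamma_{l-m-1}$, the convexity estimate is too weak and one must instead exploit the multiplicative jump of $\psi$.

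Finally I would close by strong induction on $l$, proving $\psi_l\ge\beta(l-\overline{l})$ with $\beta:=\frac{K^{\mu}}{m+2}$, where $\widetilde{\gamma}_1$ (hence $K$, which is large precisely when $\widetilde{\gamma}_1$ is small, as $1/\mu<0$) is chosen so that $K\ge C$, $\beta(m+1)\le\delta$, and $\Gamma_{\overline{l}}^{\mu}\ge\beta m$. For $\overline{l}\le l\le\overline{l}+m$ the bound follows from $\psi_l\ge\psi_{\overline{l}}=\Gamma_{\overline{l}}^{\mu}\ge\beta m\ge\beta(l-\overline{l})$. For $l\ge\overline{l}+m+1$, Case A gives $\psi_l\ge C^{\mu}l\ge\beta(l-\overline{l})$ since $\beta\le C^{\mu}$, while Case B combined with the inductive bound $\psi_{l-m-1}\ge\beta(l-m-1-\overline{l})$ and $\beta(m+1)\le\delta$ gives $\psi_l\ge\beta(l-m-1-\overline{l})+\delta\ge\beta(l-\overline{l})$. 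Rearranging $\psi_l=\Gamma_l^{\mu}\ge K^{\mu}\frac{l-\overline{l}}{m+2}$ through the decreasing map $s\mapsto s^{1/\mu}$ then delivers the stated estimate.
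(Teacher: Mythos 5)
Your proof is correct and follows essentially the same route as the paper's: both pass to $\psi_l=\Gamma_l^{\mu}$ and extract a uniform additive increment $\psi_l-\psi_{l-m-1}\ge\delta$ from the alternative $\Gamma_l^{\alpha}\le C^{\alpha}(\Gamma_{l-m-1}-\Gamma_l)$ via the convexity estimate combined with the dichotomy on the ratio $\Gamma_l/\Gamma_{l-m-1}$ --- which is precisely the ``technique of \cite[p.~14]{Attouch09}'' that the paper invokes only by citation, so you have usefully spelled that step out. The only difference is bookkeeping: the paper fixes $l$ and splits according to whether some index in the window $[\frac{l-\overline{l}}{m+2}+\overline{l},\,l]$ satisfies the first alternative (using monotonicity of $\Gamma$ there, and telescoping the increments otherwise), whereas you run a single strong induction establishing $\psi_l\ge\beta(l-\overline{l})$ with the first alternative re-anchoring the linear bound whenever it occurs; both deliver the stated estimate.
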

\begin{proof}
	Fix any $l\ge\overline{l}$. If there exists
	$i\in[\frac{l-\overline{l}}{m+2}+\overline{l},l]\cap\mathbb{N}$ such that
	$i^{\frac{1}{\mu}}\!\ge(\Gamma_{i-m-1}\!-\!\Gamma_i)^{\frac{1-\theta}{\theta}}$,
	then $\Gamma_{\!l}\le\Gamma_{\!i}\le C{i}^{\frac{1}{\mu}}
	\le C(\frac{l-\overline{l}}{m+2}\!+\!\overline{l})^{\frac{1}{\mu}}
	\le C(\frac{l-\overline{l}}{m+2})^{\frac{1}{\mu}}$,
	where the last two inequalities are due to $\frac{1}{\mu}<0$.  
	Thus, the conclusion holds for this case. 
	Suppose that for all $i\in[\frac{l-\overline{l}}{m+2}\!+\!\overline{l},l]\cap\mathbb{N}$,
	${i}^{\frac{1}{\mu}}\le(\Gamma_{i-m-1}\!-\!\Gamma_i)^{\frac{1-\theta}{\theta}}$. 
	Then $\Gamma_{i}\le C\big(\Gamma_{i-m-1}\!-\!\Gamma_i\big)^{\frac{1-\theta}{\theta}}$.
	By using the same analysis technique as in \cite[Page 14]{Attouch09}, 
	for all $i\in[\frac{l-\overline{l}}{m+2}\!+\!\overline{l},l]\cap\mathbb{N}$,  
	we have $\Gamma_{i}^{\mu}-\Gamma_{i-m-1}^{\mu}\ge\widetilde{\gamma}_1$ 
	for some $\widetilde{\gamma}_1>0$ (if necessary by increasing $\overline{l}$), 
	which implies that $\Gamma_{l}^{\mu}-\Gamma_{l-q(m+1)}^{\mu}\ge q\widetilde{\gamma}_1$ 
	with $q=\lfloor\frac{l-\frac{l-\overline{l}}{m+2}-\overline{l}}{m+1}\rfloor+1
	=\lfloor\frac{l-\overline{l}}{m+2}\rfloor+1$, and consequently, 
	\(
	\Gamma_{l}\le(\Gamma_{l-q(m+1)}+q\widetilde{\gamma}_1)^{1/\mu}
	\le\widetilde{\gamma}_1^{1/\mu}q^{1/\mu}
	\le\widetilde{\gamma}_1^{1/\mu}\lfloor\frac{l-\overline{l}}{m+2}\rfloor^{1/\mu}.
	\)
	The desired result then follows.   
\end{proof} 
\begin{lemma}\label{KL-Xirate}
	Suppose that $\Phi$ is a KL function associated to $\varphi$ and satisfies
	\eqref{Phi-cond1}-\eqref{Phi-cond2}. Then there exist $\overline{k}\in\mathbb{N}$
	and constants $\widehat{\varrho}\in(0,1),\widehat{\gamma}>0$ 
	and $\gamma'>0$ such that for all $k\ge\overline{k}$,
	\[
	\sum_{j=k}^{\infty}\Xi_{j}\le\left\{\begin{array}{cl}
		\!\widehat{\gamma}\widehat{\varrho}^{\lfloor\frac{k-1}{m+1}\rfloor} 
		&{\rm if}\ \theta\in(0,\frac{1}{2}],\\
		\widehat{\gamma}{k}^{\frac{1-\theta}{1-2\theta}}&{\rm if}\ \theta\in(\frac{1}{2},1)
	\end{array}\right.\,{\rm and}\ 
	\Phi(x^k)-\Phi^*\le\left\{\begin{array}{cl}
		\!\gamma'\widehat{\varrho}^{\lfloor\frac{k-1}{m+1}\rfloor} 
		&{\rm if}\ \theta\in(0,\frac{1}{2}],\\
		\gamma'{k}^{\frac{1-\theta}{1-2\theta}}&{\rm if}\ \theta\in(\frac{1}{2},1).
	\end{array}\right.    
	\]
\end{lemma}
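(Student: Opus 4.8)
The plan is to run a nonmonotone \L ojasiewicz-type argument on the tail sums $A_k:=\sum_{j=k}^{\infty}\Xi_j$, which are finite by Lemma \ref{lk-sequence}, and on the shifted value gaps $r_k:=\Phi(x^{\ell(k)})-\Phi^*$, treating the two exponent regimes separately. First I would dispose of finite termination: if $\Phi(x^{\ell(\overline{k})})=\Phi^*$ for some $\overline{k}$, then as in Case 1 of the proof of Lemma \ref{lk-sequence} we get $\Phi(x^{\ell(k)})=\Phi^*$ for all $k\ge\overline{k}$, and Lemma \ref{lemma1-Phi} (iv) forces all $x^k$ with $k>\overline{k}+m$ to coincide, so $\Xi_k=0$ and $\Phi(x^k)-\Phi^*=0$ eventually and both estimates hold trivially. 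Hence I may assume $r_k>0$ for all $k$ (Case 2 of that proof), in which its displayed inequalities remain in force: for all $k\ge\widehat{k}$, the KL bound $b\,\Xi_k\varphi'(r_k)\ge1$ and the descent estimate $a\,\Xi_{k+m+1}^2\le r_k-r_{k+m+1}$. With $\varphi(t)=ct^{1-\theta}$, applying the former at index $k+m+1$ and combining with the latter yields the master recursion
\[
r_k-r_{k+m+1}\ \ge\ C_0\,r_{k+m+1}^{2\theta},\qquad C_0:=\frac{a}{\big(bc(1-\theta)\big)^2}>0 .
\]
Passing $\nu\to\infty$ in \eqref{rate-ineq1} and using that $\{r_j\}$ is nonincreasing also gives the tail-sum recursion
\[
A_{k+m+1}\ \le\ \tfrac14 A_k+ba^{-1}{\textstyle\sum_{j=k}^{k+m}}\varphi(r_j)\ \le\ \tfrac14 A_k+(m\!+\!1)bca^{-1}r_k^{1-\theta}.
\]

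For $\theta\in(0,\tfrac12]$ one has $2\theta\le1$, so $r_{k+m+1}^{2\theta}\ge r_{k+m+1}$ once $r_{k+m+1}\le1$, and the master recursion gives $r_{k+m+1}\le\varrho\,r_k$ with $\varrho:=(1+C_0)^{-1}\in(0,1)$. Iterating along each residue class modulo $m+1$ then yields $r_k\le\gamma'\widehat{\varrho}^{\,\lfloor(k-1)/(m+1)\rfloor}$ with $\widehat{\varrho}:=\varrho$, after adjusting the constant $\gamma'$. Since $\ell(k)\in\{[k-m]_+,\dots,k\}$ gives $\Phi(x^k)\le\Phi(x^{\ell(k)})$, this is exactly the claimed bound on $\Phi(x^k)-\Phi^*$. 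Inserting the geometric decay of $r_k^{1-\theta}$ into the tail-sum recursion produces a mixed recursion $A_{k+m+1}\le\tfrac14 A_k+(\text{geometric in }k)$, whose elementary solution gives geometric decay of $A_k=\sum_{j\ge k}\Xi_j$ at the rate $\widehat{\varrho}^{\,\lfloor(k-1)/(m+1)\rfloor}$ once $\widehat{\varrho}$ is enlarged to $\max\{\tfrac14,\varrho^{1-\theta}\}$.

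For $\theta\in(\tfrac12,1)$ the exponent $(1-\theta)/\theta$ lies in $(0,1)$. Rearranging the KL bound gives $\varphi(r_k)\le c'\,\Xi_k^{(1-\theta)/\theta}$, and since each $\Xi_j\le A_k-A_{k+m+1}$ for $j\in\{k,\dots,k+m\}$, the tail-sum recursion becomes $A_{k+m+1}\le\tfrac14 A_k+C_1\big(A_k-A_{k+m+1}\big)^{(1-\theta)/\theta}$. Writing $l:=k+m+1$ and decomposing $\tfrac14 A_{l-m-1}=\tfrac14 A_l+\tfrac14(A_{l-m-1}-A_l)$, then using $A_{l-m-1}-A_l\to0$ together with $(1-\theta)/\theta<1$ to absorb the gap term, I obtain $A_l\le C_2\big(A_{l-m-1}-A_l\big)^{(1-\theta)/\theta}$ for all large $l$. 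As $\Gamma_l:=A_l$ is nonnegative and nonincreasing, this is a (sharper) instance of the hypothesis of Lemma \ref{lemma-sequence}, which therefore gives $\sum_{j\ge k}\Xi_j=A_k\le\widehat{\gamma}\,k^{(1-\theta)/(1-2\theta)}$. For the value gap I rearrange the KL bound into $r_k\le\big(bc(1-\theta)\big)^{1/\theta}\Xi_k^{1/\theta}\le C_3 A_k^{1/\theta}$; since $1/\theta>1$ and the exponent $(1-\theta)/(1-2\theta)$ is negative, the $A_k$-rate yields $A_k^{1/\theta}\le\gamma' k^{(1-\theta)/(1-2\theta)}$ for large $k$, and $\Phi(x^k)-\Phi^*\le r_k$ closes the argument.

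The main obstacle lies in the case $\theta\in(\tfrac12,1)$: the sliding-window index $\ell(k)$ must be tracked so that the descent and KL estimates line up with the correct $(m\!+\!1)$-shift, and the tail-sum recursion must be coerced into exactly the hypothesis of Lemma \ref{lemma-sequence}. The delicate point is absorbing the surplus $\tfrac14 A_{l-m-1}$ term, which relies on both $(1-\theta)/\theta<1$ and $A_{l-m-1}-A_l\to0$ (the latter guaranteed by $\sum\Xi_j<\infty$ from Lemma \ref{lk-sequence}); once this normalization is in place, the stated sublinear rate is immediate from Lemma \ref{lemma-sequence}, while the linear regime reduces to the elementary geometric solve described above.
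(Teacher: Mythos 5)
Your argument is correct and follows essentially the same route as the paper's: both rest on the KL inequality evaluated at $x^{\ell(k)}$, the summed descent inequality \eqref{rate-ineq1} passed to the limit to yield a recursion on the tail sums, and Lemma \ref{lemma-sequence} for the sublinear regime. The only (harmless) difference is organizational: for $\theta\in(0,1/2]$ you first extract geometric decay of $r_k=\Phi(x^{\ell(k)})-\Phi^*$ from the separate recursion $r_k-r_{k+m+1}\ge C_0\,r_{k+m+1}^{2\theta}$ and then feed it into the tail-sum recursion, whereas the paper obtains geometric decay of $\Lambda_k=\sum_{j\ge k}\Xi_j$ directly from $\Lambda_{k+m+1}\le M(\Lambda_k-\Lambda_{k+m+1})$ and reads off the value-gap rate from $\Phi(x^k)-\Phi^*\le[bc(1-\theta)]^{1/\theta}\Xi_k\le[bc(1-\theta)]^{1/\theta}\Lambda_k$.
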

\begin{proof}
	It suffices to consider that $\Phi(x^{\ell(k)})\ne\Phi^*$ 
	for every $k\in\mathbb{N}$. By the proof of Lemma \ref{lk-sequence}, 
	$b\Xi_k\varphi'(\Phi(x^{\ell(k)})-\Phi(\widetilde{x}))\ge 1$ 
	for all $k\ge\widehat{k}$. Together with the expression of $\varphi$
	and condition H2, for all $k\ge\widehat{k}$,  
	$(\Phi(x^{\ell(k)})-\Phi^*)^{\theta}
	\le bc(1\!-\!\theta)\|x^{\ell(k)}\!-\!x^{\ell(k)-1}\|$,
	and consequently,
	\[
	\varphi(\Phi(x^{\ell(k)})\!-\!\Phi^*)
	=c(\Phi(x^{\ell(k)})\!-\!\Phi^*)^{1-\theta}
	\le c\big[bc(1\!-\!\theta)\|x^{\ell(k)}-x^{\ell(k)-1}\|\big]^{\frac{1-\theta}{\theta}}.
	\]
	For each $k\in\mathbb{N}$, let $\Lambda_k:=\sum_{j=k}^{\infty}\Xi_{j}$. 
	By combining the last inequality with \eqref{rate-ineq1}, 
	for any $\nu>k\ge\widehat{k}$,
	\begin{equation}\label{Xik-ineq1}
		\frac{3}{4}\Lambda_{k+m+1}
		\le\frac{1}{4}{\textstyle\sum_{j=k}^{k+m}}\,\Xi_j
		+{bc}a^{-1}\big[bc(1-\theta)\big]^{\frac{1-\theta}{\theta}}
		{\textstyle\sum_{j=k}^{k+m}}\,\Xi_j^{\frac{1-\theta}{\theta}}.
	\end{equation}
	When $\theta\in(0,1/2]$, since $\frac{1-\theta}{\theta}\ge 1$ and
	$\Xi_k<1$ for all $k\ge\widehat{k}$ (if necessary by increasing $\widehat{k}$), 
	we have $\Lambda_{k+m+1}\le M(\Lambda_{k}-\Lambda_{k+m+1})$ 
	with $M=\frac{1}{3}+\frac{4bc}{3a}[bc(1-\theta)]^{\frac{1-\theta}{\theta}}$,
	which implies that $\Lambda_{k}\le\frac{M}{1+M}\Lambda_{k-m-1}$ 
	for all $k\ge\widehat{k}+m\!+\!1$. From this recursion formula, we obtain 
	$\Lambda_{k}\le(\frac{M}{1+M})^{\lfloor\frac{k-1}{m+1}\rfloor}\Lambda_1$.
	The result holds with $\widehat{\varrho}=\frac{M}{1+M}$ and $\widehat{\gamma}=\Lambda_1$.
	When $\theta\in(1/2,1)$, from \eqref{Xik-ineq1} it follows that 
	for all $k\ge\widehat{k}+m\!+\!1$,
	\[
	\Lambda_{k+m+1}\le M{\textstyle\sum_{j=k}^{k+m}}\,\Xi_j^{\frac{1-\theta}{\theta}}
	\le M(m\!+\!1)^{\frac{2\theta-1}{\theta}}
	\big[{\textstyle\sum_{j=k}^{k+m}}\,\Xi_j\big]^{\frac{1-\theta}{\theta}}
	\le M(m\!+\!1)^{\frac{2\theta-1}{\theta}}
	\big(\Lambda_{k}\!-\!\Lambda_{k+m+1}\big)^{\frac{1-\theta}{\theta}},
	\]
	where the second inequality is by the concavity of $t^{\frac{1-\theta}{\theta}}\ (t>0)$.
	Using Lemma \ref{lemma-sequence} yields the result. 
	
	The second part follows by noting that $\Phi(x^k)\!-\!\Phi^*
	\le[bc(1\!-\!\theta)]^{\frac{1}{\theta}}\|x^{\ell(k)}\!-\!x^{\ell(k)-1}\|^{\frac{1}{\theta}}$
	for all $k\ge\widehat{k}$, and $\|x^{\ell(k)}\!-\!x^{\ell(k)-1}\|^{\frac{1}{\theta}}\le\Xi_k$
	because $\|x^{\ell(k)}\!-\!x^{\ell(k)-1}\|<1$ (if necessary by increasing $\widehat{k}$).
\end{proof}

Now we are ready to analyze the convergence rate of $\{x^k\}_{k\in\mathbb{N}}$ under a suitable assumption.
\begin{theorem}\label{KL-rate}
	Suppose that $\Phi$ is a KL function associated to $\varphi$ and satisfies
	\eqref{Phi-cond1}-\eqref{Phi-cond2}.
	\begin{itemize}
		\item [(i)] When $\theta=0$, $\{x^k\}_{k\in\mathbb{N}}$ converges to 
		a point $\widetilde{x}\in\varpi(x^0)$ in a finite number of steps.
		
		\item [(ii)]When $\theta\in(0,1)$, if there exist $\widetilde{k}_0\in\mathbb{N}$,
		$\widetilde{\gamma}>0$ and $\widetilde{\tau}\in(0,1)$ such that
		for all $k\ge\widetilde{k}_0$,
		\begin{align}\label{rate-cond}
			\sum_{K_2\cup K_{31}\ni j=k}^{\infty}\!
			\sqrt{\Phi(x^{\ell(j+1)})\!-\!\Phi(x^{j+1})}
			\le\left\{\begin{array}{cl}
				\widetilde{\gamma}\widetilde{\tau}^k &{\rm if}\ \theta\!\in(0,1/2],\\
				\!\widetilde{\gamma}k^{\frac{1-\theta}{1-2\theta}}&{\rm if}\ \theta\!\in(1/2,1),
			\end{array}\right.
		\end{align}
		where $K_{2}\!:=\!\big\{k\in\mathbb{N}\,|\,\frac{a}{2}\|x^{k+1}\!-\!x^{k}\|^2\!
		\le\Phi(x^{\ell(k+1)})\!-\!\Phi(x^{k+1})
		<\frac{a}{2}\|x^{k+1}\!-\!x^{k}\|^{\frac{1}{\theta}}\big\}$
		and $K_{31}\!:=\!\big\{k\in K_1\backslash K_{2}\ |\ 
		\Phi^*\!-\!\Phi(x^{k+1})
		>\frac{a}{4}\|x^{k+1}\!-\!x^{k}\|^{\frac{1}{\theta}}\big\}$,
		then the sequence $\{x^k\}_{k\in\mathbb{N}}$ converges to 
		a point $\widetilde{x}\in\varpi(x^0)$ and there exist $\gamma>0$ 
		and $\varrho\in(0,1)$ such that for all sufficiently large $k$,
		\begin{equation}\label{aim-ineq-rate}
			\|x^k-\widetilde{x}\|\le\Delta_k\le\left\{\begin{array}{cl}
				\gamma\varrho^{k} &{\rm if}\ \theta\in(0,1/2],\\
				\gamma k^{\frac{1-\theta}{1-2\theta}}&{\rm if}\ \theta\in(1/2,1)
			\end{array}\right.{\rm with}\ \Delta_k:=\sum_{j=k}^{\infty}\|x^{j+1}\!-\!x^j\|.
		\end{equation}
	\end{itemize}
\end{theorem}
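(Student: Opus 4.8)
The plan is to reduce both parts to the tail sum $\Delta_k=\sum_{j\ge k}\|x^{j+1}-x^j\|$ and to exploit the rates of $\Lambda_k=\sum_{j\ge k}\Xi_j$ and of $\Phi(x^k)-\Phi^*$ already furnished by Lemma \ref{KL-Xirate}, together with the recursion machinery of Lemma \ref{lemma-sequence}. For part (i) I would argue by contradiction. If $\Phi(x^{\ell(\overline k)})=\Phi^*$ for some $\overline k$, then \eqref{lk-monotone} forces $\Phi(x^{\ell(k)})=\Phi^*$ for all $k\ge\overline k$, and Lemma \ref{lemma1-Phi}(iv) gives that all $x^k$ coincide for $k>\overline k+m$, i.e.\ finite termination at a point of $\varpi(x^0)$. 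It remains to exclude the case $\Phi(x^{\ell(k)})\ne\Phi^*$ for every $k$: here the KL inequality derived in the proof of Lemma \ref{lk-sequence} reads $b\,\Xi_k\,\varphi'(\Phi(x^{\ell(k)})-\Phi^*)\ge1$ for all large $k$, and with $\varphi(t)=ct$ one has $\varphi'\equiv c$, so $\Xi_k\ge(bc)^{-1}$, contradicting $\sum_k\Xi_k<\infty$. Hence only finite termination can occur.

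For part (ii) I would first discard the finite-termination subcase (handled as above) and then use the partition $\mathbb N=\overline K_1\cup K_2\cup K_{31}\cup K_{32}$, where $\overline K_1=\mathbb N\setminus K_1$ and $K_{32}:=\{j\in K_1\setminus K_2:\Phi^*-\Phi(x^{j+1})\le\tfrac a4\|x^{j+1}-x^j\|^{1/\theta}\}$, so that $K_1\setminus K_2=K_{31}\cup K_{32}$. On $\overline K_1$, condition H1 and the definition of $\overline K_1$ give $\tfrac a2\|x^{j+1}-x^j\|^2\le\Phi(x^{\ell(j)})-\Phi(x^{\ell(j+1)})$, which with the KL inequality and the concavity of $\varphi$ yields $\|x^{j+1}-x^j\|\le\sqrt{\tfrac{2b}a\,\Xi_j\,[\varphi(\Phi(x^{\ell(j)})-\Phi^*)-\varphi(\Phi(x^{\ell(j+1)})-\Phi^*)]}$; Lemma \ref{lemma1-sequence} telescopes this block to $\tfrac14\Lambda_k+\tfrac{2b}a\varphi(\Phi(x^{\ell(k)})-\Phi^*)$. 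On $K_2\cup K_{31}\subseteq K_1$ the membership gives $\|x^{j+1}-x^j\|\le\sqrt{2/a}\,\sqrt{\Phi(x^{\ell(j+1)})-\Phi(x^{j+1})}$, so this block is controlled directly by assumption \eqref{rate-cond}. On $K_{32}$, writing $\Phi(x^{\ell(j+1)})-\Phi^*=[\Phi(x^{\ell(j+1)})-\Phi(x^{j+1})]-[\Phi^*-\Phi(x^{j+1})]$ and using $j\in K_1\setminus K_2$ gives $\Phi(x^{\ell(j+1)})-\Phi^*\ge\tfrac a4\|x^{j+1}-x^j\|^{1/\theta}$, whence $\|x^{j+1}-x^j\|\le C(\Phi(x^{\ell(j+1)})-\Phi^*)^\theta\le C\,\Xi_{j+1}$ by the KL inequality, so this block is bounded by $C\Lambda_k$. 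Summing the four blocks gives $\Delta_k\le C\Lambda_k+\tfrac{2b}a\varphi(\Phi(x^{\ell(k)})-\Phi^*)+\sqrt{2/a}\,(\text{the }\eqref{rate-cond}\text{ bound})$.

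The crucial step, and the one I expect to be the main obstacle, is converting the boundary term $\varphi(\Phi(x^{\ell(k)})-\Phi^*)$ into something with the \emph{correct} rate when $\theta\in(1/2,1)$, since bounding it by its own decay rate is too lossy there. Instead I would use $\varphi(\Phi(x^{\ell(k)})-\Phi^*)\le C\,\Xi_k^{(1-\theta)/\theta}$ (from the proof of Lemma \ref{KL-Xirate}) together with the observation that, because $\ell(k)\in\{k-m,\dots,k\}$, we have $\Xi_k=\|x^{\ell(k)}-x^{\ell(k)-1}\|\le\sum_{j=k-m-1}^{k-1}\|x^{j+1}-x^j\|=\Delta_{k-m-1}-\Delta_k$. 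Inserting the rate of $\Lambda_k$ from Lemma \ref{KL-Xirate} and the $\theta\in(1/2,1)$ branch of \eqref{rate-cond}, the estimate above becomes a closed recursion $\Delta_k\le C\max\{k^{(1-\theta)/(1-2\theta)},(\Delta_{k-m-1}-\Delta_k)^{(1-\theta)/\theta}\}$, and Lemma \ref{lemma-sequence} then delivers $\Delta_k\le C\,k^{(1-\theta)/(1-2\theta)}$.

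For $\theta\in(0,1/2]$ the argument simplifies: since $(1-\theta)/\theta\ge1$ and $\Xi_k<1$ for large $k$, one has $\varphi(\Phi(x^{\ell(k)})-\Phi^*)\le C\,\Xi_k\le C\Lambda_k$, so all blocks decay geometrically and $\Delta_k\le\gamma\varrho^k$ after combining the two geometric rates. Finally, finiteness of $\Delta_{\widetilde k_0}$ (hence convergence of $\{x^k\}$ to some $\widetilde x\in\varpi(x^0)$) follows from the very same decomposition evaluated at $k=\widetilde k_0$, and the bound $\|x^k-\widetilde x\|\le\Delta_k$ is immediate from $x^k-\widetilde x=-\sum_{j\ge k}(x^{j+1}-x^j)$.
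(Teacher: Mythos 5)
Your proposal is correct and follows essentially the same route as the paper: the same partition $\mathbb{N}=\overline{K}_1\cup(K_2\cup K_{31})\cup K_{32}$, the same per-block estimates (the $\varphi$-telescoping bound on $\overline{K}_1$, the direct use of \eqref{rate-cond} on $K_2\cup K_{31}$, and $\|x^{j+1}-x^j\|\le\widehat{c}_2(\theta)\Xi_{j+1}$ on $K_{32}$), and the same reduction to Lemma \ref{lemma-sequence} via $\Xi_k\le\Delta_{k-m-1}-\Delta_k$ for $\theta\in(1/2,1)$. The only deviation is that for $\theta\in(0,1/2]$ you bound the boundary term by $\Xi_k\le\Lambda_k$ and invoke the geometric decay of $\Lambda_k$ from Lemma \ref{KL-Xirate} directly, whereas the paper solves the recursion $\Delta_k\le\varrho_1\Delta_{k-m-1}+\beta\tau^k$; both are valid and yield the same conclusion.
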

\begin{proof}
	{\bf(i)} We argue that there exists $\overline{k}\in\mathbb{N}$ 
	such that $\Phi(x^{\ell(\overline{k})})=\Phi^*$, 
	and the result then follows by the proof of Lemma \ref{lk-sequence}. If not, 
	by the proof of Lemma \ref{lk-sequence}, $\sum_{k=1}^{\infty}\Xi_k<\infty$. 
	On the other hand, since $\Phi$ has the KL property of exponent $\theta=0$ at $\widetilde{x}$, 
	for all $k\ge\widehat{k}$, $b\Xi_k\varphi'(\Phi(x^{\ell(k)})-\Phi^*)\ge 1$ 
	holds with $\varphi(t)=ct$ for $t\in[0,\infty)$. Then, for any $\nu>\widehat{k}$, 
	$\sum_{k=\widehat{k}}^{\nu}\Xi_k\ge\frac{\nu-\widehat{k}+1}{bc}$.
	Passing the limit $\nu\to\infty$ to this inequality yields that
	$\sum_{k=\widehat{k}}^{\infty}\Xi_k=\infty$. Thus, we get a contradiction.
	
	\noindent
	{\bf(ii)} It suffices to consider that $\Phi(x^{\ell(k)})\ne\Phi^*$ 
	for every $k\in\mathbb{N}$. From the proof of Lemma \ref{lk-sequence},
	\begin{equation}\label{ineq-Phi}
		(\Phi(x^{\ell(k)})\!-\!\Phi^*)^{\theta}\le bc(1\!-\!\theta)\Xi_k
		\quad\ {\rm for\ all}\ k\ge\widehat{k}.
	\end{equation}
	
	\noindent
	{\bf Step 1: to deal with the summation associated to $\overline{K}_1$.}
	From Case 2 in the proof of Theorem \ref{KL-converge}, 
	inequality \eqref{temp-ineq2} holds for any $\nu\!>k\ge\widehat{k}$. 
	In addition, from \eqref{ineq-Phi}, for all $k\ge\widehat{k}$,
	$\varphi(\Phi(x^{\ell(k)})\!-\!\Phi^*)
	=c(\Phi(x^{\ell(k)})\!-\!\Phi^*)^{1-\theta}
	\le c[bc(1-\theta)\Xi_k]^{\frac{1-\theta}{\theta}}$.
	By substituting this inequality into \eqref{temp-ineq2} 
	and writing $\widehat{c}_1(\theta):={bc}a^{-1}[bc(1-\theta)]^{\frac{1-\theta}{\theta}}$,
	for any $\nu>k\ge\widehat{k}$ it holds that
	\begin{equation}\label{ineq-K1}
		{\textstyle\sum_{\overline{K}_{\!1}\ni j=k}^{\nu}}\|x^{j+1}\!-\!x^j\|
		\le({1}/{2}){\textstyle\sum_{j=k}^{\nu}}\Xi_j
		+\widehat{c}_1(\theta)\Xi_k^{\frac{1-\theta}{\theta}}.
	\end{equation}
	
	\noindent
	{\bf Step 2: to deal with the summation associated to $K_2\cup K_{31}$.}
	Since $K_2\cup K_{31}\subseteq K_1$, inequality \eqref{temp-ineq2} continues 
	to hold for any $K_2\cup K_{31}\ni k\ge\widehat{k}$ and any $\nu>k$, i.e., 
	\[
	{\textstyle \sum_{K_2\cup K_{31}\ni j=k}^{\nu}}\,\|x^{j+1}\!-\!x^{j}\|
	\le\sqrt{2{a}^{-1}}{\textstyle\sum_{K_2\cup K_{31}\ni j=k}^{\nu}}
	\sqrt{\Phi(x^{\ell(j+1)})-\Phi(x^{j+1})}.
	\]
	By combining this inequality with the given assumption in \eqref{rate-cond},
	for any $\nu>k\ge\widehat{k}$, we have
	\begin{equation}\label{ineq-K2}
		\sum_{K_2\cup K_{31}\ni j=k}^{\nu}\!\big\|x^{j+1}\!-\!x^{j}\big\|
		\le\left\{\begin{array}{cl}
			\sqrt{2{a}^{-1}}\,\widetilde{\gamma}\widetilde{\tau}^k &{\rm if}\ \theta\in(0,1/2],\\
			\!\sqrt{{2}{a}^{-1}}\,\widetilde{\gamma}k^{\frac{1-\theta}{1-2\theta}}
			&{\rm if}\ \theta\in(1/2,1).
		\end{array}\right.
	\end{equation}
	
	\noindent
	{\bf Step 3: to deal with the summation associated to 
		$K_{32}\!:=K_1\backslash(K_2\cup K_{31})$.}
	By the definition of $K_{32}$, for any $K_{32}\ni k\ge\widehat{k}$, 
	we have $\Phi^*\!-\!\Phi(x^{k+1})
	\le\frac{a}{4}\|x^{k+1}\!-\!x^k\|^{\frac{1}{\theta}}$ 
	and $\Phi(x^{\ell(k+1)})\!-\!\Phi(x^{k+1})
	\ge\frac{a}{2}\|x^{k+1}\!-\!x^{k}\|^{\frac{1}{\theta}}$.
	Together with inequality \eqref{ineq-Phi}, it follows that 
	for any $K_{32}\ni k\ge\widehat{k}$,
	\[
	\Xi_{k+1}\ge[bc(1\!-\!\theta)]^{-1}(\Phi(x^{\ell(k+1)})\!-\!\Phi^*)^{\theta}
	\ge [bc(1\!-\!\theta)]^{-1}(a/4)^\theta\|x^{k+1}\!-\!x^k\|.
	\]
	Then, for any $\nu>k\ge\widehat{k}$, summing the last inequality 
	from $k$ to $\nu$ yields that
	\begin{equation}\label{ineq-K3}
		{\textstyle \sum_{K_{32}\ni j=k}^{\nu}}\,\|x^{j+1}\!-\!x^j\|
		\le \widehat{c}_2(\theta){\textstyle\sum_{K_{32}\ni j=k}^{\nu}}\,\Xi_{j+1}
		\ \ {\rm with}\ \ \widehat{c}_2(\theta):=bc(1\!-\!\theta)(4/a)^{\theta}.
	\end{equation}
	By adding inequalities \eqref{ineq-K1}-\eqref{ineq-K3} together,
	for any $\nu>k\ge\widetilde{k}:=\max\{\widetilde{k}_0,\widehat{k}\}$ 
	it holds that
	\begin{equation}\label{temp-keyineq}
		\!\sum_{j=k}^{\nu}\|x^{j+1}\!-\!x^j\|\le
		\!\left\{\begin{array}{cl}
			\!\frac{1}{2}\sum_{j=k}^{\nu}\Xi_j
			\!+\!\widehat{c}_1(\theta)\,\Xi_{k}^{\frac{1-\theta}{\theta}}
			\!+\widehat{c}_2(\theta)\!{\displaystyle\sum_{K_{32}\ni j=k}^{\nu}}\!\Xi_{j+1}
			\!+\!\sqrt{\frac{2}{a}}\widetilde{\gamma}\widetilde{\tau}^{k}
			&{\rm if}\ \theta\in(0,\frac{1}{2}],\\
			\!\frac{1}{2}\sum_{j=k}^{\nu}\Xi_j\!+\!\widehat{c}_1(\theta)\,
			\Xi_{k}^{\frac{1-\theta}{\theta}}
			\!+\widehat{c}_2(\theta)\!{\displaystyle\sum_{K_{32}\ni j=k}^{\nu}}\!\Xi_{j+1}
			\!+\!\sqrt{\frac{2}{a}}\widetilde{\gamma}k^{\frac{1-\theta}{1-2\theta}}
			&{\rm if}\ \theta\in(\frac{1}{2},1).
		\end{array}\right.
	\end{equation}
	Passing the limit $\nu\to\infty$ and using Lemma \ref{lk-sequence} yields that 
	$\sum_{j=k}^{\infty}\|x^{j+1}\!-\!x^j\|<\infty$.
	
	For the second part, by the definition of $\Delta_k$ and the triangle inequality,
	$\|x^k-\widetilde{x}\|\le\Delta_k$, so we only need to prove the second inequality
	in \eqref{aim-ineq-rate} by the two cases $\theta\in(0,\frac{1}{2}]$ 
	and $\theta\in(\frac{1}{2},1)$.
	
	\noindent
	{\bf Case 1: $\theta\in(0,\frac{1}{2}]$.} Since $\{x^k\}_{k\in\mathbb{N}}$ is convergent,
	we have $\Xi_k<1$ for all $k\ge\widetilde{k}$ (if necessary by increasing $\widetilde{k}$).
	Note that $\frac{1-\theta}{\theta}\ge 1$. From \eqref{temp-keyineq} and
	the definitions of $\Xi_k$ and $\Delta_k$, for any $\nu>k\ge\widetilde{k}$,
	\begin{equation*}
		{\textstyle\sum_{j=k}^{\nu}}\,\|x^{j+1}\!-\!x^j\|
		\le(1/2){\textstyle\sum_{j=k}^{\nu}}\,\Xi_j
		+\widehat{c}_1(\theta)(\Delta_{k-m-1}\!-\!\Delta_{k})
		+\sqrt{2{a}^{-1}}\,\widetilde{\gamma}\widetilde{\tau}^k
		+\widehat{c}_2(\theta){\textstyle\sum_{j=k+1}^{\nu+1}}\Xi_{j}.
	\end{equation*}
	By passing the limit $\nu\to\infty$ and using Lemma \ref{KL-Xirate},
	there exist $\widehat{\varrho}\in(0,1)$ and $\widehat{\gamma}>0$ such that 
	for all $k\ge\widetilde{k}$,
	$\Delta_{k}\le\frac{1}{2}\widehat{\gamma}\widehat{\varrho}^{\lfloor\frac{k-1}{m+1}\rfloor}
	+\widehat{c}_1(\theta)\big(\Delta_{k-m-1}\!-\!\Delta_{k}\big)
	+\sqrt{2{a}^{-1}}\widetilde{\gamma}\widetilde{\tau}^k
	+\widehat{c}_2(\theta)\widehat{\gamma}\widehat{\varrho}^{\lfloor\frac{k}{m+1}\rfloor}$.
	Let $\varrho_1\!:=\!\frac{\widehat{c}_1(\theta)}{1+\widehat{c}_1(\theta)}$
	and $\beta=\widehat{\gamma}(0.5\widehat{\rho}^{-\frac{m+2}{m+1}}
	\!+\!\widehat{\rho}^{-1}\widehat{c}_2(\theta))\!+\!\sqrt{2{a}^{-1}}\,\widetilde{\gamma}$.
	For all $k\ge\widetilde{k}$, we have $\Delta_{k}\le\varrho_1\Delta_{k-m-1}+\beta\tau^k$
	with $\tau=\max(\widehat{\varrho}^{\frac{1}{m+1}},\widetilde{\tau})$.
	By using this recursion formula, it then follows that
	\begin{equation}\label{convrate1}
		\Delta_{k}\le\varrho_1^{\lfloor\frac{k-\widetilde{k}}{m+1}\rfloor}
		\Delta_{k-\lfloor\frac{k-\widetilde{k}}{m+1}\rfloor(m+1)}
		+\beta\tau^{k}\Big[1+\frac{\varrho_1}{\tau^{m+1}}+\cdots
		+\big(\frac{\varrho_1}{\tau^{m+1}}\big)^{\lfloor\frac{k-\widetilde{k}}{m+1}\rfloor-1}\Big].
	\end{equation}
	After an elementary calculation respectively for $\frac{\varrho_1}{\tau^{m+1}}>1,
	\frac{\varrho_1}{\tau^{m+1}}=1$ and $\frac{\varrho_1}{\tau^{m+1}}<1$, 
	there exist $\gamma>0$ and $\varrho\in(0,1)$ such that 
	$\Delta_k\le\gamma_1\varrho^k$ holds for all sufficiently large $k$.
	
	\noindent
	{\bf Case 2: $\theta\in(\frac{1}{2},1)$.} In this case, $\frac{1-\theta}{\theta}\le 1$.
	From \eqref{temp-keyineq}, it follows that for any $\nu>k\ge\widetilde{k}$,
	\begin{equation*}
		{\textstyle\sum_{j=k}^{\nu}}\,\|x^{j+1}\!-\!x^j\|
		\le[{1}/{2}+\widehat{c}_2(\theta)]{\textstyle\sum_{j=k}^{\nu+1}}\,\Xi_j
		+\widehat{c}_1(\theta)(\Delta_{k-m-1}\!-\!\Delta_k)^{\frac{1-\theta}{\theta}}
		+\!\sqrt{2{a}^{-1}}\widetilde{\gamma}k^{\frac{1-\theta}{1-2\theta}}.
	\end{equation*}
	By passing the limit $\nu\to\infty$ and using Lemma \ref{KL-Xirate},
	there exists $\widehat{\gamma}>0$ such that for all $k\ge\widetilde{k}$ 
	(if necessary by increasing $\widetilde{k}$),
	$\Delta_{k}\le\big[\widehat{\gamma}\big({1}/{2}+\widehat{c}_2(\theta)\big)
	\!+\!\sqrt{2a^{-1}}\widetilde{\gamma}\big]{k}^{\frac{1-\theta}{1-2\theta}}
	+\widehat{c}_1(\theta)\big(\Delta_{k-m-1}\!-\!\Delta_k\big)^{\frac{1-\theta}{\theta}}$.
	Write $C_1\!:=\widehat{\gamma}({1}/{2}+\widehat{c}_2(\theta))\!+\!\sqrt{2a^{-1}}\widetilde{\gamma}
	+\widehat{c}_1(\theta)$. Then, for all $k\ge\widetilde{k}$, 
	\(
	\Delta_{k}\le C_1\max\big\{{k}^{\frac{1-\theta}{1-2\theta}},
	(\Delta_{k-m-1}\!-\!\Delta_k)^{\frac{1-\theta}{\theta}}\big\}.
	\)
	Now by invoking Lemma \ref{lemma-sequence}, we obtain the desired result. 
	The proof is completed.
\end{proof}

Similar to Proposition \ref{SN-cond1}, we can establish the following conclusion 
for condition \eqref{rate-cond}. 
\begin{proposition}\label{SN-cond2}
	Suppose that $\Phi$ is a KL function associated to $\varphi$ with 
	$\theta\in(0,1)$, and that satisfies \eqref{Phi-cond1}-\eqref{Phi-cond2} 
	and has a $\rho$-weak convexity on a neighborhood of $\omega(x^0)$. 
	Then, condition \eqref{rate-cond} is sufficient and necessary 
	for the conclusion of Theorem \ref{KL-rate} (ii).
\end{proposition}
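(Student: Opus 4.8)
The plan is to follow the pattern of Proposition~\ref{SN-cond1}. The sufficiency of condition~\eqref{rate-cond} is precisely the content of Theorem~\ref{KL-rate}(ii), so it remains only to prove necessity: assuming the conclusion of Theorem~\ref{KL-rate}(ii) together with the $\rho$-weak convexity, I would derive \eqref{rate-cond}. Note that under this conclusion the tail sum $\Delta_k=\sum_{j=k}^\infty\|x^{j+1}-x^j\|$ is finite and obeys the geometric (resp.\ polynomial) decay asserted in \eqref{aim-ineq-rate}, while Lemma~\ref{KL-Xirate} always provides the corresponding decay for the tails $\Lambda_{k}=\sum_{j=k}^\infty\Xi_j$.

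First I would reuse the pointwise bound from the proof of Proposition~\ref{SN-cond1}. Since $\|x^k-x^{\ell(k)}\|\to0$ by Lemma~\ref{lemma1-Phi}(ii) and $\mathrm{dist}(x^k,\omega(x^0))\to0$, there is $\widehat{k}_1\ge m$ with $x^k,x^{\ell(k)}$ lying in the weak-convexity neighborhood $\mathcal{N}_0$ for all $k\ge\widehat{k}_1$. Applying the weak-convexity inequality at index $k+1$ and using condition H2, exactly as in the derivation of \eqref{temp-ineq3}, yields for all $k\ge\widehat{k}_1$
\[
\sqrt{\Phi(x^{\ell(k+1)})-\Phi(x^{k+1})}\le\sqrt{b^2/2}\,\Xi_{k+1}+\sqrt{(\rho+1)/2}\sum_{l=k+1-\widehat{k}_1}^{k}\|x^{l+1}-x^l\|.
\]

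Next I would sum this bound over $j\in K_2\cup K_{31}$ from $j=k$ to $\infty$; as the summand is nonnegative, enlarging the index set to all of $\mathbb{N}$ can only increase the right-hand side. The $\Xi$-terms telescope into $\sqrt{b^2/2}\,\Lambda_{k+1}$, while in the remaining double sum each increment $\|x^{l+1}-x^l\|$ with $l\ge k+1-\widehat{k}_1$ is counted at most $\widehat{k}_1$ times, giving a bound $\sqrt{(\rho+1)/2}\,\widehat{k}_1\,\Delta_{k+1-\widehat{k}_1}$. Hence
\[
\sum_{K_2\cup K_{31}\ni j=k}^{\infty}\sqrt{\Phi(x^{\ell(j+1)})-\Phi(x^{j+1})}\le\sqrt{b^2/2}\,\Lambda_{k+1}+\sqrt{(\rho+1)/2}\,\widehat{k}_1\,\Delta_{k+1-\widehat{k}_1}.
\]

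Finally I would insert the decay estimates. For $\theta\in(0,1/2]$, Lemma~\ref{KL-Xirate} gives $\Lambda_{k+1}\le\widehat{\gamma}\widehat{\varrho}^{\lfloor k/(m+1)\rfloor}$ and the assumed conclusion gives $\Delta_{k+1-\widehat{k}_1}\le\gamma\varrho^{\,k+1-\widehat{k}_1}$, both of which are dominated by a single geometric factor $\widetilde{\tau}^{\,k}$ with $\widetilde{\tau}=\max\{\widehat{\varrho}^{1/(m+1)},\varrho\}\in(0,1)$, reproducing the first branch of \eqref{rate-cond}. For $\theta\in(1/2,1)$, the same two results give bounds of order $(k+1)^{(1-\theta)/(1-2\theta)}$ and $(k+1-\widehat{k}_1)^{(1-\theta)/(1-2\theta)}$, and since the exponent is negative these differ from $k^{(1-\theta)/(1-2\theta)}$ only by multiplicative constants for large $k$, reproducing the second branch. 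I expect the only delicate point to be this last bookkeeping in the polynomial regime — confirming that the index shifts $k\mapsto k+1-\widehat{k}_1$ and the floor $\lfloor k/(m+1)\rfloor$ cost only constants — but it is routine once the structural estimate above is established.
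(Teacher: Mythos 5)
Your proposal is correct and follows essentially the same route as the paper: sufficiency is Theorem \ref{KL-rate}(ii), and necessity is obtained by summing the weak-convexity estimate \eqref{temp-ineq3} from Proposition \ref{SN-cond1} and then invoking the decay rates from Lemma \ref{KL-Xirate} and \eqref{aim-ineq-rate}. Your write-up is in fact more careful than the paper's terse two-line argument (e.g., you correctly accumulate the $\Xi$-terms into $\Lambda_{k+1}$ and track the multiplicity $\widehat{k}_1$ in the double sum, where the paper's displayed inequality appears to contain a typo).
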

\begin{proof}
	It suffices to prove the necessity. From inequality \eqref{temp-ineq3}, 
	for any $\nu>\widehat{k}_1$ it holds that
	\[
	{\textstyle\sum_{j=k}^{\nu}}\,\sqrt{\Phi(x^{\ell(j)})\!-\!\Phi(x^j)}
	\le \sqrt{b^2/2}\,\Xi_k+m\sqrt{(\rho\!+\!1)/2}\,
	{\textstyle\sum_{j=k-\widehat{k}_1}^{\nu}}\,\|x^{j+1}\!-\!x^{j}\|.
	\]
	By passing the limit $\nu\to\infty$ and using Lemma \ref{KL-Xirate}
	and \eqref{aim-ineq-rate}, it is immediate to obtain \eqref{rate-cond}.  
\end{proof} 

The following lemma shows condition \eqref{scond} also implies \eqref{rate-cond}. 
\begin{lemma}\label{Scond-rate}
	If $\Phi$ is a KL function associated to $\varphi$ with 
	$\theta\in(0,1)$, and that satisfying \eqref{Phi-cond1}-\eqref{Phi-cond2}, 
	then condition \eqref{rate-cond} holds whenever there exist
	$a_0\in[0,\frac{a}{8(m+1)^2}]$ and a neighborhood 
	of $\omega(x^0)$, denoted by $\mathcal{N}_0$, such that \eqref{scond} holds.
\end{lemma}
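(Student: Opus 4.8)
The plan is to reduce \eqref{rate-cond} to a rate estimate for the tail jump sum $\Delta_{k-m}$ and then to re-establish that estimate directly from \eqref{scond}, avoiding any appeal to the conclusion of Theorem \ref{KL-rate} (ii). Since $K_2\cup K_{31}\subseteq K_1$, the per-term estimate obtained in the proof of Lemma \ref{Scond1-prop} (combining H2 with \eqref{scond} applied to $y=x^{j+1}$ and $z=x^{\ell(j+1)}$) remains valid on $K_2\cup K_{31}$, so that for all large $j\in K_2\cup K_{31}$,
\[
\sqrt{\Phi(x^{\ell(j+1)})-\Phi(x^{j+1})}\le \frac{\sqrt{2}(m+1)^2b}{\sqrt{a}}\,\Xi_{j+1}+a_1\sum_{l=j-m}^{j}\|x^{l+1}-x^l\|,\quad a_1=\frac{\sqrt{a}}{4\sqrt{2}(m+1)^2}+\sqrt{\tfrac{a}{8(m+1)^2}}.
\]
Summing over $K_2\cup K_{31}\ni j\ge k$ and using $\sum_{j\ge k}\Xi_{j+1}=\Lambda_{k+1}$ together with $\sum_{K_2\cup K_{31}\ni j\ge k}\sum_{l=j-m}^{j}\|x^{l+1}-x^l\|\le(m+1)\Delta_{k-m}$ reduces \eqref{rate-cond} to bounding $\frac{\sqrt{2}(m+1)^2b}{\sqrt{a}}\Lambda_{k+1}+a_1(m+1)\Delta_{k-m}$ by the claimed rate. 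Lemma \ref{KL-Xirate} already supplies the rate of $\Lambda_{k+1}$, so the entire proof comes down to establishing for $\Delta_{k-m}$ (equivalently $\Delta_k$) the rate in \eqref{aim-ineq-rate}. As in the earlier proofs I first discard the case $\Phi(x^{\ell(\overline{k})})=\Phi^*$, where Lemma \ref{lemma1-Phi} (iv) makes the iterates eventually constant and \eqref{rate-cond} trivial, and henceforth assume $\Phi(x^{\ell(k)})\ne\Phi^*$ for every $k$.

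The delicate point is that I cannot quote Theorem \ref{KL-rate} (ii) for the $\Delta_k$ rate, since that theorem presupposes \eqref{rate-cond}; I must re-derive it using \eqref{scond} instead. I would revisit the decomposition $\mathbb{N}=\overline{K}_1\sqcup(K_2\cup K_{31})\sqcup K_{32}$ of Theorem \ref{KL-rate} (ii): Steps 1 and 3 are unchanged and produce \eqref{ineq-K1} and \eqref{ineq-K3}, both governed by $\Xi$-sums and $\widehat{c}_1(\theta)\Xi_k^{(1-\theta)/\theta}$. For the middle block I replace \eqref{rate-cond} by $\|x^{j+1}-x^j\|\le\sqrt{2/a}\sqrt{\Phi(x^{\ell(j+1)})-\Phi(x^{j+1})}$ ($j\in K_1$) and the per-term estimate above, obtaining
\[
\sum_{K_2\cup K_{31}\ni j=k}^{\nu}\|x^{j+1}-x^j\|\le\frac{2(m+1)^2b}{a}\sum_{j=k}^{\nu}\Xi_{j+1}+\frac{2m+3}{4(m+1)}\sum_{j=k-m}^{\nu}\|x^{j+1}-x^j\|,
\]
the key arithmetic being $\sqrt{2/a}\,a_1(m+1)=\frac{2m+3}{4(m+1)}<1$. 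Adding the three blocks, the full jump sum $\sum_{j=k}^{\nu}\|x^{j+1}-x^j\|$ recurs on the right with this coefficient below $1$; after splitting off the finite head $\sum_{j=k-m}^{k-1}\|x^{j+1}-x^j\|$ it is absorbed into the left, leaving the positive residual $\frac{2m+1}{4(m+1)}$, exactly as in Lemma \ref{Scond1-prop}.

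Letting $\nu\to\infty$, invoking $\Xi_k\le\Delta_{k-m-1}-\Delta_k$ (so that $\Xi_k^{(1-\theta)/\theta}\le(\Delta_{k-m-1}-\Delta_k)^{(1-\theta)/\theta}$) and the $\Lambda$-rate of Lemma \ref{KL-Xirate} then turns the absorbed inequality into a closed recursion for $\Delta_k$. For $\theta\in(1/2,1)$ it reads $\Delta_k\le C_1\max\{k^{(1-\theta)/(1-2\theta)},(\Delta_{k-m-1}-\Delta_k)^{(1-\theta)/\theta}\}$, so Lemma \ref{lemma-sequence} with $\Gamma_l=\Delta_l$ yields $\Delta_k\le\gamma k^{(1-\theta)/(1-2\theta)}$; for $\theta\in(0,1/2]$ the exponent $(1-\theta)/\theta\ge1$ and $\Xi_k<1$ collapse it to $\Delta_k\le\varrho_1\Delta_{k-m-1}+\beta\tau^k$ with $\varrho_1\in(0,1)$, whose iteration as in \eqref{convrate1} gives $\Delta_k\le\gamma\varrho^k$. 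Feeding these rates, together with that of $\Lambda_{k+1}$, into the reduction of the first paragraph establishes \eqref{rate-cond}. The principal obstacle is precisely this circularity-free recovery of the $\Delta_k$ rate; it hinges on the coefficient $\frac{2m+3}{4(m+1)}$ staying below $1$ (equivalently, on the bound $a_0\le\frac{a}{8(m+1)^2}$ built into \eqref{scond}), so that the full jump sum can be absorbed and the residual handed to Lemma \ref{lemma-sequence} rather than bounding the troublesome $\varphi$-type term by a power of $k$ directly.
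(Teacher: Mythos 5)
Your proof is correct and follows essentially the same route as the paper's: derive the rate \eqref{aim-ineq-rate} for $\Delta_k$ directly from \eqref{scond} via the $\overline{K}_1$ / $(K_2\cup K_{31})$ / $K_{32}$ decomposition, absorbing the full jump sum with residual coefficient $\frac{2m+1}{4(m+1)}$ and finishing with the Case 1/Case 2 recursions of Theorem \ref{KL-rate}, then convert that rate back into \eqref{rate-cond}. The only (minor) difference is that you carry out the final conversion using the per-term estimate from \eqref{scond} itself, whereas the paper cites Proposition \ref{SN-cond2}; your version is if anything slightly more self-contained, since that proposition's stated hypothesis is weak convexity rather than \eqref{scond}.
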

\begin{proof}
	From \eqref{ineq-Phi}, for all $k\ge\widehat{k}$,
	$\varphi(\Phi(x^{\ell(k)})\!-\!\Phi^*)
	=c(\Phi(x^{\ell(k)})\!-\!\Phi^*)^{1-\theta}
	\le c[bc(1-\theta)\Xi_k]^{\frac{1-\theta}{\theta}}$.
	Together with the proof of Lemma \ref{Scond1-prop}, for all $k\ge\widehat{k}$,
	\begin{align*}
		\frac{2m\!+\!1}{4(m\!+\!1)}
		\!\sum_{K_1\ni j=k}^{\infty}\!\sqrt{\Phi(x^{\ell(j+1)})-\Phi(x^{j+1})}
		&\le\!\frac{\sqrt{2}(m\!+\!1)^2b}{\sqrt{a}}\sum_{j=k}^{\infty}\!\Xi_{j+1}
		+a_1(m\!+\!1)\sum_{j=k-m}^{k-1}\|x^{j+1}-x^j\|\\ 
		&\quad\!+\!\frac{a_1(m+1)}{2}\Big[\sum_{j=k}^{\infty}\,\Xi_j
		\!+\!2ba^{-1}\varphi(\Phi(x^{\ell(k)})\!-\!\Phi^*)\Big]\\
		&\le c_1\sum_{j=k}^{\infty}\!\Xi_{j}+c_2\sum_{j=k}^{\infty}\!\Xi_{j}^{\frac{1-\theta}{\theta}}
		+a_1(m\!+\!1)\!\sum_{j=k-m}^{k-1}\|x^{j+1}\!-\!x^j\|,
	\end{align*}
	where $c_1=\frac{\sqrt{2}(m\!+\!1)^2b}{\sqrt{a}}+\frac{a_1(m+1)}{2}$ and $c_2=a_1(m+1)ba^{-1}c[bc(1-\theta)]^{\frac{1-\theta}{\theta}}$. 
	By noting that $\sqrt{\frac{a}{2}}\|x^{j+1}\!-\!x^j\|\le
	\!\sqrt{\Phi(x^{\ell(j+1)})\!-\!\Phi(x^{j+1})}$ 
	for each $j\in K_1$, for all $k\ge\widehat{k}$, 
	\[
	\frac{2m\!+\!1}{4(m\!+\!1)}\sqrt{\frac{a}{2}}\!\sum_{K_1\ni j=k}^{\infty}\|x^{j+1}\!-\!x^j\|\le c_1\sum_{j=k}^{\infty}\!\Xi_{j}+c_2\sum_{j=k}^{\infty}\!\Xi_{j}^{\frac{1-\theta}{\theta}}
	+a_1(m\!+\!1)(\Delta_{k-m}-\Delta_{k}).
	\]
	Together with \eqref{ineq-K1} and using the same arguments as those for Case 1 and 2 in the proof of Theorem \ref{KL-rate}, then \eqref{aim-ineq-rate} holds for sufficiently large $k$. Combining  with Proposition \ref{SN-cond2}, the conclusion holds. 
\end{proof}

Finally, we claim that when $\theta\in(0,{1}/{2}]$, 
if there exists $\widetilde{k}_1\in\mathbb{N}$ 
such that $\Phi^*\le\Phi(x^{k})$ for all $k\ge\widetilde{k}_1$,  
then condition \eqref{rate-cond} holds automatically. Indeed, 
in this case, $K_{31}$ contains a finite number of indices by its definition, 
while the set $K_2$ contains a finite number of indices since 
$\lim_{k\to\infty}x^{k+1}\!-\!x^k=0$ by Lemma \ref{lemma1-Phi} (ii). 
Hence, the claimed fact holds. 

 \section{Nonmonotone line search PG with extrapolation}\label{sec4}

  Consider the problem \eqref{Fprob} with a proper lsc $g\!:\mathbb{X}\to\overline{\mathbb{R}}$,
  which is found to arise in many applications such as variable selection
  (see, e.g., \cite{Tibshirani96,Fan01,Zhang10}) in statistics, classification/regression
  in machine learning \cite{Sra12,Curtis17}, and signal processing (see, e.g., \cite{Donoho06,Candes08,Chartrand07}).
  We assume that $g$ is lower bounded and the function $F$ is coercive and is bounded below.
  For this class of nonconvex nonsmooth problems, Yang \cite{Yang21} recently proposed
  a nonmonotone line search PG method with extrapolation (PGenls) and established the convergence
  rate of the objective value sequence respectively for the monotone case and the case
  without extrapolation, under the assumption that $F$ is a KL function of exponent $\theta\in[0,1)$.
  In this section, we apply the convergence results in Section \ref{sec3} to the iterate
  sequence generated by PGenls, and establish its global convergence
  and convergence rate. For any given $\delta>0$, define the function
  \begin{equation}\label{Hdelta}
   H_{\delta}(z):=F(x)+({\delta}/{2})\|x-u\|^2
   \quad{\rm for}\ z:=(x,u)\in\mathbb{X}\times\mathbb{X}.
  \end{equation}
  The detailed iterate steps of the PGenls are described as follows.
 \begin{algorithm}[H]
 \caption{\label{PGMenls}{\bf\,(Nonmonotone line search PG with extrapolation)}}
 \textbf{Initialization:} Select $m\in\!\mathbb{N},\delta\in(0,{1}/{2}),
 0<\alpha<\frac{\delta}{2},0<\!\tau_{\rm min}\!\le\frac{1}{2(\alpha+\delta)+L_{\!f}}\!<\!\tau_{\rm max},
 \beta_{\rm max}\!\ge0$, $\eta_1\in(0,1)$ and $\eta_2\in(0,1)$.
 Choose $x^0\in{\rm dom}g$. Let $x^{-1}=x^0,z^0=(x^0,x^{-1})$ and set $k:=0$.\\
 \textbf{while} the termination condition is not satisfied \textbf{do}
 \begin{enumerate}
  \item  Choose $\beta_{k,0}\in[0,\beta_{\rm max}]$ and $\tau_{k,0}\in[\tau_{\rm min},\tau_{\rm max}]$.

  \item  \textbf{For} $l=0,1,2,\ldots$ \textbf{do}

  \item \quad Let $\beta_k=\beta_{k,0}\eta_1^{l},\tau_k=\max\{\tau_{k,0}\eta_2^{l},\tau_{\rm min}\}$ and
              $y^k=x^k\!+\!\beta_k(x^k\!-\!x^{k-1})$.

  \item \quad Compute $x^{k+1}\in\mathcal{P}_{\!\tau_k}g(y^k\!-\!\tau_k\nabla\!f(y^k))$ and set $z^{k+1}:=(x^{k+1},x^k)$.

  \item \quad If $H_{\delta}(z^{k+1})\le \max_{j=[k-m]_{+},\ldots,k}H_{\delta}(z^j)-\frac{\alpha}{2}\|z^{k+1}-z^k\|^2$, go to Step 7.

  \item  \textbf{end for}

  \item  Set $k\leftarrow k+1$ and go to Step 1.
 \end{enumerate}
 \textbf{end (while)}
 \end{algorithm}
 \begin{remark}\label{remark-PGMenls}
 {\bf(a)} Algorithm \ref{PGMenls} has a little difference from the PGenls proposed by Yang \cite{Yang21}
 in the setting of parameters and the definition of the potential function $H_{\delta}$.
 By Lemma \ref{step-size} below, Algorithm \ref{PGMenls} is well defined.
  When $m=0$, Algorithm \ref{PGMenls} becomes a monotone line search descent method
  with extrapolation and now by using the decrease of $\{H_{\delta}(z^k)\}_{k\in\mathbb{N}}$
  and the analysis technique as in \cite{Attouch13,Bolte14}, one can obtain the convergence
  of the sequence $\{x^k\}_{k\in\mathbb{N}}$ if $F$ is a KL function and its convergence rate
  if $F$ is a KL function of exponent $\theta\in[0,1)$. When $\beta_{\rm max}=0$,
  Algorithm \ref{PGMenls} is a nonmonotone line search PG method, and to the best of our knowledge,
  there are no global convergence and convergence rate results on the sequence
  $\{x^k\}_{k\in\mathbb{N}}$ even though $f$ and $g$ are convex.

 \noindent
 {\bf(b)} A good step-size initialization at each outer iteration can greatly
  reduce the line search cost. Inspired by \cite{Wright09}, we initialize
  $\tau_{k,0}$ for $k\ge 1$ in Step 1 by the Barzilai-Borwein (BB) rule \cite{Barzilai88}:
  \begin{equation}\label{BB-stepsize}
   \tau_{k,0}=\max\Big\{\min\Big\{\frac{\|\Delta z^k\|^2}{\langle\Delta z^k,\Delta\zeta^k\rangle},
    \frac{\langle\Delta z^k,\Delta\zeta^k\rangle}{\|\Delta\zeta^k\|^2},{\tau}_{\rm max}\Big\},{\tau}_{\rm min}\Big\},
  \end{equation}
 where $\Delta z^k\!:=z^k\!-\!z^{k-1}$ and $\Delta\zeta^k\!:=\nabla\!\widetilde{f}(z^k)\!-\!\nabla\!\widetilde{f}(z^{k-1})$
 with $\widetilde{f}(z):=f(x)+({\delta}/{2})\|x-u\|^2$ for $z=(x,u)\in\mathbb{X}\times\mathbb{X}$.
 Inspired by the good performance of the Nesterov's acceleration strategy \cite{Nesterov83},
 we initialize the extrapolation parameter $\beta_{k,0}$ in Step 1 by this rule, that is,
 \begin{equation}\label{Nesterov}
  \beta_{k,0}={(t_{k-1}-1)}/{t_k}\ \ {\rm with}\ \
  t_{k+1}=\big(1+\!\sqrt{1+4t_k^2}\big)/2\ \ {\rm for}\ \ t_{-1}=t_0=1.
  \end{equation}
 \end{remark}
 \begin{lemma}\label{step-size}
  Let $\{x^k\}_{k\in\mathbb{N}}$ be the sequence generated by Algorithm \ref{PGMenls}.
  Then, for each $k\in\mathbb{N}$, when $\beta_k\le\sqrt{\frac{\delta(\tau_k-\tau_k^2L_{\!f})}{4(1+\tau_kL_{\!f})^2}}$,
  the line search criterion in Step 5 is satisfied when $\tau_k\le\frac{1}{2\alpha+2\delta+L_{\!f}}$.
 \end{lemma}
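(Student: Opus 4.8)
The plan is to prove the stronger, monotone-type sufficient decrease
\[
H_{\delta}(z^{k+1})\le H_{\delta}(z^k)-\tfrac{\alpha}{2}\|z^{k+1}-z^k\|^2,
\]
which immediately yields the criterion in Step 5 since $H_{\delta}(z^k)\le\max_{j=[k-m]_+,\ldots,k}H_{\delta}(z^j)$. Recalling $z^{k+1}=(x^{k+1},x^k)$, $z^k=(x^k,x^{k-1})$ and $\|z^{k+1}-z^k\|^2=\|x^{k+1}-x^k\|^2+\|x^k-x^{k-1}\|^2$, this target reads $F(x^{k+1})+\tfrac{\delta}{2}\|x^{k+1}-x^k\|^2\le F(x^k)+\tfrac{\delta}{2}\|x^k-x^{k-1}\|^2-\tfrac{\alpha}{2}(\|x^{k+1}-x^k\|^2+\|x^k-x^{k-1}\|^2)$.

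First I would establish a one-step estimate for $F$. Combining the descent lemma for the $L_{\!f}$-smooth $f$, namely $f(x^{k+1})\le f(y^k)+\langle\nabla\!f(y^k),x^{k+1}-y^k\rangle+\tfrac{L_{\!f}}{2}\|x^{k+1}-y^k\|^2$, with the minimality of $x^{k+1}$ for the map $z\mapsto g(z)+\langle\nabla\!f(y^k),z-y^k\rangle+\tfrac{1}{2\tau_k}\|z-y^k\|^2$ evaluated at the comparison point $z=x^k$, and the reverse smoothness bound $f(y^k)+\langle\nabla\!f(y^k),x^k-y^k\rangle\le f(x^k)+\tfrac{L_{\!f}}{2}\|x^k-y^k\|^2$, the gradient terms cancel and I obtain $F(x^{k+1})\le F(x^k)+B\|x^k-y^k\|^2-A\|x^{k+1}-y^k\|^2$, where $A:=\tfrac{1-\tau_kL_{\!f}}{2\tau_k}$ and $B:=\tfrac{1+\tau_kL_{\!f}}{2\tau_k}$; note $A>0$ because $\tau_kL_{\!f}<1$ under $\tau_k\le\tfrac{1}{2\alpha+2\delta+L_{\!f}}$. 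Using $x^k-y^k=-\beta_k(x^k-x^{k-1})$ turns the middle term into $B\beta_k^2\|x^k-x^{k-1}\|^2$.

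Writing $d_k:=x^{k+1}-x^k$ and $d_{k-1}:=x^k-x^{k-1}$, substituting this estimate into the target reduces it to the quadratic inequality $\tfrac{\delta+\alpha}{2}\|d_k\|^2+(B\beta_k^2+\tfrac{\alpha-\delta}{2})\|d_{k-1}\|^2-A\|x^{k+1}-y^k\|^2\le 0$. Since $x^{k+1}-y^k=d_k-\beta_kd_{k-1}$, I would lower-bound the negative term by the reverse Young inequality $\|d_k-\beta_kd_{k-1}\|^2\ge\tfrac12\|d_k\|^2-\beta_k^2\|d_{k-1}\|^2$ (which follows from $2\|u\|\|v\|\le\tfrac12\|u\|^2+2\|v\|^2$ with $u=d_k,\,v=\beta_kd_{k-1}$). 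This gives $-A\|x^{k+1}-y^k\|^2\le-\tfrac{A}{2}\|d_k\|^2+A\beta_k^2\|d_{k-1}\|^2$, so it suffices to show $(\tfrac{\delta+\alpha}{2}-\tfrac{A}{2})\|d_k\|^2+((A+B)\beta_k^2+\tfrac{\alpha-\delta}{2})\|d_{k-1}\|^2\le 0$.

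Finally I would check both coefficients are nonpositive. The hypothesis $\tau_k\le\tfrac{1}{2\alpha+2\delta+L_{\!f}}$ gives $\tfrac{1}{2\tau_k}\ge\alpha+\delta+\tfrac{L_{\!f}}{2}$, hence $A\ge\alpha+\delta$, so the coefficient of $\|d_k\|^2$ is $\le0$. For the coefficient of $\|d_{k-1}\|^2$, rewriting the hypothesis on $\beta_k$ through $1-\tau_kL_{\!f}=2\tau_kA$ and $1+\tau_kL_{\!f}=2\tau_kB$ shows it is exactly $\beta_k^2\le\tfrac{\delta A}{8B^2}$; then, using $A\le B$ (so $A(A+B)\le2B^2$) and $\alpha<\tfrac{\delta}{2}$ (so $\tfrac{\delta}{4}\le\tfrac{\delta-\alpha}{2}$), one gets $(A+B)\beta_k^2\le\tfrac{\delta A(A+B)}{8B^2}\le\tfrac{\delta}{4}\le\tfrac{\delta-\alpha}{2}$, which makes that coefficient $\le0$ as well, completing the argument. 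The main obstacle is precisely this last interlocking: matching the explicit, $\tau_k$-dependent bound on $\beta_k$ with the requirement produced by the splitting; the clean choice of the weight $\tfrac12$ in the reverse Young inequality, together with the normalization $A+B=\tfrac{1}{\tau_k}$ and the parameter relation $\alpha<\delta/2$, is what makes the stated bound exactly sufficient.
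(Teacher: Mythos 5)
Your proof is correct and follows essentially the same route as the paper: establish the monotone ($m=0$) sufficient decrease of $H_{\delta}$, which trivially implies the nonmonotone criterion, via the descent lemma, the prox minimality of $x^{k+1}$ tested at $x^k$, and a Young-type splitting of the cross term, with the bound on $\beta_k$ rewritten exactly as $\beta_k^2\le\delta A/(8B^2)$. The only difference is cosmetic: the paper imports the one-step estimate from \cite[Lemma 3.1]{Yang21} already phrased in terms of $\|x^{k+1}-x^k\|^2$, whereas you derive it from scratch and carry out the splitting explicitly (with marginally tighter constants), and all your coefficient checks are valid.
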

 \begin{proof}
 Using the definition of $x^{k+1}$ and following the analysis of \cite[Lemma 3.1]{Yang21} yields
 \[
  F(x^{k+1})-F(x^k)
  \le -\frac{\tau_k^{-1}\!-\!L_{\!f}}{4}\|x^{k+1}\!-\!x^k\|^2
   +\frac{(\tau_k^{-1}\!+\!L_{\!f})^2}{\tau_k^{-1}\!-\!L_{\!f}}\|x^{k}\!-\!y^k\|^2,
 \]
 which along with $y^{k}=x^k\!+\!\beta_{k}(x^k\!-\!x^{k-1})$ and
 $\beta_k\le\sqrt{\frac{\delta(\tau_k-\tau_k^2L_{\!f})}{4(1+\tau_kL_{\!f})^2}}$ implies that
 \[
  F(x^{k+1})-F(x^k)
  \le -\frac{\tau_k^{-1}\!-\!L_{\!f}}{4}\|x^{k+1}\!-\!x^k\|^2
   +\frac{\delta}{4}\|x^{k}\!-\!x^{k-1}\|^2.
 \]
 Together with the expression of $H_{\delta}$ and $z^k=(x^{k},x^{k-1})$,
 it then follows that
 \begin{align*}
  H_{\delta}(z^{k+1})-H_{\delta}(z^{k})
  &\le -\frac{1\!-\!\tau_k(2\delta\!+\!L_{\!f})}{4\tau_k}\|x^{k+1}\!-\!x^k\|^2
      -\frac{\delta}{4}\|x^{k}\!-\!x^{k-1}\|^2\\
  &\le-\min\Big\{\frac{1\!-\!\tau_k(2\delta\!+\!L_{\!f})}{4\tau_k},\frac{\delta}{4}\Big\}
       \big\|z^{k+1}-z^k\big\|^2
 \end{align*}
 Notice that $\delta\in(0,{1}/{2})$ and $0<\alpha<\delta/2$.
 The line search criterion on Step 5 is satisfied for $m=0$ whenever
 $\tau_k\le\frac{1}{2\alpha+2\delta+L_{\!f}}$, so is the line search criterion
 on Step 5 for a general $m\in\mathbb{N}$.
 \end{proof}
 \subsection{Convergence results of Algorithm \ref{PGMenls}}\label{sec4.1}

 From lines 2-6 of Algorithm \ref{PGMenls}, the sequence $\{z^k\}_{k\in\mathbb{N}}$
 generated by Algorithm \ref{PGMenls} with $\delta\in(0,1/2)$ satisfies the condition H1
 for $\Phi=H_{\delta}$. Recall that $F$ is assumed to be coercive and lower bounded.
 Clearly, $H_{\delta}$ is coercive and lower bounded. By Lemma \ref{lemma1-Phi} (i),
 the sequence $\{z^k\}_{k\in\mathbb{N}}$ is bounded. The following lemma shows that
 it also satisfies the condition H2 for $\Phi=H_{\delta}$, and moreover, $\Phi=H_{\delta}$
 satisfies the conditions \eqref{Phi-cond1}-\eqref{Phi-cond2}.
 \begin{lemma}\label{lemma-PGMenls}
  Let $\{x^k\}_{k\in\mathbb{N}}$ be generated by Algorithm \ref{PGMenls}.
  Then, the following results hold.
  \begin{itemize}
   \item [(i)] $\liminf_{k\to\infty}H_{\delta}(z^{k})
               \ge\lim_{k\to\infty}H_{\delta}(z^{\ell(k)})$.

   \item[(ii)] For each $\{x^{k_q}\}_{q\in\mathbb{N}}$ with $\lim_{q\to\infty}x^{k_q}\to\widehat{x}$,
               \(
                \limsup_{q\to\infty}H_{\delta}(z^{k_q})\le\!H_{\delta}(\widehat{z})
               \)
               for $\widehat{z}=(\widehat{x},\widehat{x})$.

   \item[(iii)] For each $k$, there is $w^{k}\in\partial H_{\delta}(z^{k})$ such that
                $\|w^{k}\|\!\le\!\sqrt{2}\big[(L_{\!f}+\tau_{\rm min}^{-1})(1+\beta_{\rm max})+2\delta\big]\|z^{k}\!-\!z^{k-1}\|$.
  \end{itemize}
 \end{lemma}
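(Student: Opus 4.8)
The plan is to verify the three assertions separately, noting that (i)--(ii) are exactly conditions \eqref{Phi-cond1}--\eqref{Phi-cond2} for $\Phi=H_\delta$ while (iii) is condition H2. I would dispose of (iii) first, since it is a direct computation. Writing $z^k=(x^k,x^{k-1})$ and using the separable structure of \eqref{Hdelta}, one has $\partial H_\delta(z^k)=\{(\nabla\! f(x^k)+s+\delta(x^k\!-\!x^{k-1}),\,\delta(x^{k-1}\!-\!x^k)):s\in\partial g(x^k)\}$. The first-order optimality of the proximal step at iteration $k-1$ supplies the admissible subgradient $s^k:=-\tau_{k-1}^{-1}(x^k\!-\!y^{k-1})-\nabla\!f(y^{k-1})\in\partial g(x^k)$. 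Substituting $s^k$, the first block of the resulting $w^k\in\partial H_\delta(z^k)$ becomes $\nabla\! f(x^k)-\nabla\! f(y^{k-1})-\tau_{k-1}^{-1}(x^k\!-\!y^{k-1})+\delta(x^k\!-\!x^{k-1})$; the Lipschitz continuity of $\nabla\! f$, the bound $\tau_{k-1}\ge\tau_{\rm min}$, and $\|x^k\!-\!y^{k-1}\|\le(1\!+\!\beta_{\rm max})\|z^k\!-\!z^{k-1}\|$ (from $y^{k-1}=x^{k-1}+\beta_{k-1}(x^{k-1}\!-\!x^{k-2})$) bound this block by $[(L_{\!f}+\tau_{\rm min}^{-1})(1\!+\!\beta_{\rm max})+\delta]\|z^k\!-\!z^{k-1}\|$, while the second block is $\delta\|x^k\!-\!x^{k-1}\|$. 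Since $\|w^k\|\le\sqrt2\max\{\cdot,\cdot\}$, the stated constant follows after replacing $\delta$ by $2\delta$.

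For (i), I would run a Grippo--Lampariello--Lucidi type argument on the peak values, but replace the uniform-continuity step (which fails for the merely lsc $g$) by the minimality inequality of the proximal map. Setting $h_k:=H_\delta(z^{\ell(k)})$, inequality \eqref{lk-monotone} makes $\{h_k\}$ nonincreasing and convergent, say to $\omega$; applying the line-search rule of Step 5 at index $\ell(k)-1$ and using $h_{\ell(k)-1}\to\omega$ yields $\|z^{\ell(k)}-z^{\ell(k)-1}\|\to0$. I would then prove by induction on $j$ that $H_\delta(z^{\ell(k)-j})\to\omega$ and $\|z^{\ell(k)-j}-z^{\ell(k)-j-1}\|\to0$. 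The inductive step is the heart of the matter: with the neighbouring increments vanishing, the extrapolation point satisfies $\|y^{\ell(k)-j-1}-x^{\ell(k)-j-1}\|\to0$, and comparing the minimizer $x^{\ell(k)-j}$ with the competitor $x^{\ell(k)-j-1}$ in the proximal subproblem gives $g(x^{\ell(k)-j-1})\ge g(x^{\ell(k)-j})-o(1)$, a lower bound on $g$ that substitutes for continuity. Combined with continuity of $f$ this forces $\liminf_k F(x^{\ell(k)-j-1})\ge\omega$, while $F(x^{\ell(k)-j-1})\le H_\delta(z^{\ell(k)-j-1})\le h_{\ell(k)-j-1}\to\omega$ gives the reverse inequality; hence $H_\delta(z^{\ell(k)-j-1})\to\omega$, and the line-search rule at $\ell(k)-j-2$ closes the induction by forcing the next increment to zero. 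Finally, the same index bookkeeping as in the proof of Lemma \ref{lemma1-Phi}(ii) (using $\ell(k)\in\{k\!-\!m,\ldots,k\}$) upgrades these window statements to $\lim_k H_\delta(z^k)=\omega$ and $\lim_k\|x^{k+1}\!-\!x^k\|=0$, which in particular yields \eqref{Phi-cond1}.

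With (i) established, Lemma \ref{lemma1-Phi}(ii) yields $\lim_k\|x^{k+1}-x^k\|=0$ (it is also a by-product of the induction above), and (ii) follows along the same prox-minimality line. Given any subsequence $x^{k_q}\to\widehat x$, the vanishing increments force $x^{k_q-1}\to\widehat x$ and $y^{k_q-1}\to\widehat x$; comparing the minimizer $x^{k_q}$ with the competitor $\widehat x$ in the proximal subproblem and letting $q\to\infty$ (all squared-distance and inner-product terms vanish) yields $\limsup_q g(x^{k_q})\le g(\widehat x)$. Since $f(x^{k_q})\to f(\widehat x)$ and $\tfrac\delta2\|x^{k_q}-x^{k_q-1}\|^2\to0$, this gives $\limsup_q H_\delta(z^{k_q})\le f(\widehat x)+g(\widehat x)=H_\delta(\widehat z)$, which is \eqref{Phi-cond2}. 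The main obstacle throughout is precisely the lower semicontinuity of $g$: lsc delivers $\liminf g(x^{k_q})\ge g(\widehat x)$, i.e. the wrong direction for both the propagation in (i) and the $\limsup$ in (ii), and the remedy in each case is to extract the opposite inequality from the defining minimality of the proximal step.
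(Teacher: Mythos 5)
Your proposal is correct and follows essentially the same route as the paper: part (iii) via the prox optimality condition and a direct norm bound on the two blocks of $w^k$, and parts (i)--(ii) via the minimality inequality of the proximal subproblem (compared against $x^{\ell(k)-j-1}$, respectively $\widehat{x}$) combined with a finite induction over the nonmonotone window, exactly the mechanism of \eqref{Hdeta-tempineq1} and \eqref{aim-ineq} in the paper. Your explicit remark that prox-minimality supplies the upper semicontinuity-type inequality that the merely lsc $g$ cannot, is precisely the point of the paper's argument.
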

 \begin{proof}
 {\bf(i)} For each $k\in\!\mathbb{N}$ and $j\in\!\{0,\ldots,\ell(k)\!-\!1\}$,
 by the definition of $x^{\ell(k)-j}$ in Step 4,
 \begin{align*}
  &\langle\nabla\!f(y^{\ell(k)-j-1}),x^{\ell(k)-j}\!-\!y^{\ell(k)-j-1}\rangle
  +\frac{\|x^{\ell(k)-j}\!-\!y^{\ell(k)-j-1}\|^2}{2\tau_{\ell(k)-j-1}}+g(x^{\ell(k)-j})\\
  &\le \langle\nabla\!f(y^{\ell(k)-j-1}),x^{\ell(k)-j-1}\!-\!y^{\ell(k)-j-1}\rangle
       +\frac{\|x^{\ell(k)-j-1}\!-\!y^{\ell(k)-j-1}\|^2}{2\tau_{\ell(k)-j-1}}+g(x^{\ell(k)-j-1}),
 \end{align*}
 for which, by the definition of $F$, a suitable rearrangement yields that
 \begin{align*}
  F(x^{\ell(k)-j})&\le\! F(x^{\ell(k)-j-1})\!+\!f(x^{\ell(k)-j})\!-\!f(x^{\ell(k)-j-1})
   \!+\!\langle\nabla\!f(y^{\ell(k)-j-1}),x^{\ell(k)-j-1}\!-\!x^{\ell(k)-j}\rangle\\
  &\quad -\frac{1}{2\tau_{\ell(k)-j-1}}\|x^{\ell(k)-j}\!-\!y^{\ell(k)-j-1}\|^2
    +\frac{1}{2\tau_{\ell(k)-j-1}}\|x^{\ell(k)-j-1}\!-\!y^{\ell(k)-j-1}\|^2.
 \end{align*}
 Together with the expression of $H_{\delta}$, for each $k\in\mathbb{N}$ and
 each $j\in\{0,1,\ldots,\ell(k)-1\}$,
 \begin{align}\label{Hdeta-tempineq1}
  &H_{\delta}(z^{\ell(k)-j})-H_{\delta}(z^{\ell(k)-j-1})\nonumber\\
  &\le f(x^{\ell(k)-j})-f(x^{\ell(k)-j-1})+\langle\nabla\!f(y^{\ell(k)-j-1}),
       x^{\ell(k)-j-1}\!-\!x^{\ell(k)-j}\rangle\nonumber\\
  &\quad -\frac{1}{2\tau_{\ell(k)-j-1}}\big\|x^{\ell(k)-j}\!-\!y^{\ell(k)-j-1}\big\|^2
        +\frac{1}{2\tau_{\ell(k)-j-1}}\big\|x^{\ell(k)-j-1}\!-\!y^{\ell(k)-j-1}\big\|^2\nonumber\\
  &\quad +\frac{\delta}{2}\|x^{\ell(k)-j}\!-\!x^{\ell(k)-j-1}\|^2
        -\frac{\delta}{2}\|x^{\ell(k)-j-1}\!-\!x^{\ell(k)-j-2}\|^2.
 \end{align}
 We next show that for each $i\in\{0,\ldots,\ell(k)-k+m\}$ the following relations hold:
 \begin{equation}\label{aim-ineq}
   \liminf_{k\to\infty}H_{\delta}(z^{\ell(k)-i})\ge\lim_{k\to\infty}H_{\delta}(z^{\ell(k)})
   \ \ {\rm and}\ \
   \lim_{k\rightarrow\infty}\|z^{\ell(k)-i}-z^{\ell(k)-i-1}\|=0.
 \end{equation}
 When $i=0$, the inequality in \eqref{aim-ineq} clearly holds.
 From the lines 2-6 of Algorithm \ref{PGMenls}, we have
 $H_{\delta}(z^{\ell(k)})-H_{\delta}(z^{\ell(\ell(k)-1)})
  \le-\frac{\alpha}{2}\|z^{\ell(k)}\!-\!z^{\ell(k)-1}\|^2$.
 This implies that $\lim_{k\to\infty}z^{\ell(k)}\!-\!z^{\ell(k)-1}=0$
 because $\{H_{\delta}(z^{\ell(k)})\}_{k\in\mathbb{N}}$ is convergent
 by Lemma \ref{lemma1-Phi} (i), and the equality in \eqref{aim-ineq} holds for $i=0$.
 Assume that the relations in \eqref{aim-ineq} hold for some $i\in\{0,1,\ldots,\ell(k)-k+m\!-\!1\}$.
 From the lines 2-6 of Algorithm \ref{PGMenls}, it follows that
 \(
  H_{\delta}(z^{\ell(k)-i})-H_{\delta}\big(z^{\ell(\ell(k)-i-1)}\big)
  \le-\frac{\alpha}{2}\big\|z^{\ell(k)-i}\!-\!z^{\ell(k)-i-1}\big\|^2.
 \)
 This implies that
 $\limsup_{k\to\infty}H_{\delta}(z^{\ell(k)-i})\le\lim_{k\to\infty}H_{\delta}(z^{\ell(k)})$
 because $\lim_{k\to\infty}z^{\ell(k)-i}\!-\!z^{\ell(k)-i-1}=0$ and
 $\{H_{\delta}(z^{\ell(k)})\}_{k\in\mathbb{N}}$ is convergent.
 Along with $\liminf_{k\to\infty}H_{\delta}(z^{\ell(k)-i})\!\ge\!\lim_{k\to\infty}H_{\delta}(z^{\ell(k)})$,
 we have $\lim_{k\to\infty}H_{\delta}(z^{\ell(k)-i})\!=\!\lim_{k\to\infty}H_{\delta}(z^{\ell(k)})$.
 By combining this with \eqref{Hdeta-tempineq1} for $j=i$ and
 $\lim_{k\to\infty}z^{\ell(k)-i}\!-\!z^{\ell(k)-i-1}=0$, we obtain
 the first inequality in \eqref{aim-ineq} for $i+1$. Using the first inequality
 in \eqref{aim-ineq} for $i+1$ and noting that
 \(
  H_{\delta}(z^{\ell(k)-i-1})-H_{\delta}\big(z^{\ell(\ell(k)-i-2)}\big)
  \le-\frac{\alpha}{2}\big\|z^{\ell(k)-i-1}\!-\!z^{\ell(k)-i-2}\big\|^2,
 \)
 we deduce that the equality in \eqref{aim-ineq} holds for $i+1$.
 Thus, the relations in \eqref{aim-ineq} hold for each $i\in\{0,\ldots,\ell(k)-k+m\}$.
 By combining \eqref{aim-ineq} and \eqref{Hdeta-tempineq1}, we have
 $\limsup_{k\to\infty}\sum_{i=0}^{\ell(k)-k+m}\big[H_{\delta}(z^{\ell(k)-i})-H_{\delta}(z^{\ell(k)-i-1})\big]\le 0$.
 Along with $H_{\delta}(z^{\ell(k)})-H_{\delta}(z^{k-m-1})
  ={\textstyle\sum_{i=0}^{\ell(k)-k+m}}\big[H_{\delta}(z^{\ell(k)-i})-H_{\delta}(z^{\ell(k)-i-1})\big]$
 and the convergence of $\{H_{\delta}(z^{\ell(k)})\}_{k\in\mathbb{N}}$,
 it then follows that $\liminf_{k\to\infty}H_{\delta}(z^{k-m-1})\ge\lim_{k\to\infty}H_{\delta}(z^{\ell(k)})$.

 \noindent
 {\bf(ii)} Fix any $k\in\mathbb{N}$. For each $q\in\mathbb{N}$, from the definition of $x^{k_q}$,
 it follows that
 \begin{align*}
  &\langle\nabla\!f(y^{k_q-1}),x^{k_q}\!-\!\widehat{x}\rangle
  +\frac{1}{2\tau_{k_q-1}}\|x^{k_q}\!-\!y^{k_q-1}\|^2+g(x^{k_q})
  \le \frac{1}{2\tau_{k_q-1}}\|\widehat{x}\!-\!y^{k_q-1}\|^2+g(\widehat{x}).
 \end{align*}
 After a suitable rearrangement, we obtain the following inequality
 \begin{align*}
  F(x^{k_q})&\le F(\widehat{x})\!+\!f(x^{k_q})\!-\!f(\widehat{x})\!-\!\frac{1}{2\tau_{k_q-1}}\|x^{k_q}\!-\!y^{k_q-1}\|^2
  + \langle\nabla\!f(y^{k_q-1}),\widehat{x}\!-\!x^{k_q}\rangle
       +\frac{1}{2\tau_{k_q-1}}\|\widehat{x}\!-\!y^{k_q-1}\|^2.
 \end{align*}
 From part (i) and Lemma \ref{lemma1-Phi} (ii) with $\Phi=H_{\delta}$,
 we have $\lim_{k\to\infty}z^{k+1}-z^k=0$, which implies that $\lim_{q\to\infty}x^{k_q-1}=\widehat{x}$
 and $\lim_{q\to\infty}(x^{k_q-1}-x^{k_q-2})=0$. Then, from the last inequality,
 we have $\limsup_{q\to\infty}F(x^{k_q})\le F(\widehat{x})$ and
 $\limsup_{q\to\infty}H_{\delta}(z^{k_q})\le H_{\delta}(\widehat{z})$.

  \noindent
  {\bf(iii)} For each $k\in\mathbb{N}$, by the definition of $x^{k}$,
  $0\in\nabla\!f(y^{k-1})+\tau_{k-1}^{-1}(x^{k}-y^{k-1})+\partial g(x^{k})$. Then
  \[
    w^k:=\!\left(\begin{matrix}
     \nabla\!f(x^{k})\!-\!\nabla\!f(y^{k-1})-\frac{1}{\tau_{k-1}}(x^{k}\!-\!y^{k-1})+\delta(x^{k}\!-\!x^{k-1})\\
     \delta(x^{k-1}-x^{k})
     \end{matrix}\right)\in\partial H_{\delta}(z^{k}).
  \]
  By the definition of $w^{k}$, the expression of $y^k$ in Step 3
  and $\tau_k\ge\tau_{\rm min}$, it is not hard to check that
 \begin{align*}
  \|w^{k}\|
  &\le\big(L_{\!f}\!+\!\tau_{\rm min}^{-1}+2\delta\big)\|x^{k}\!-\!x^{k-1}\|
    +(L_{\!f}\!+\!\tau_{\rm min}^{-1})\beta_{\rm max}\|x^{k-1}\!-\!x^{k-2}\|.
 \end{align*}
 This implies that the desired inequality holds. The proof is then completed.
 \end{proof}

 By \cite[Theorem 3.6]{LiPong18}, if $F$ is a KL function of exponent $\theta\in[1/2,1)$,
 then so is $H_{\delta}$. Combining Lemma \ref{lemma-PGMenls} with Theorem \ref{KL-converge}
 and \ref{KL-rate} for $\Phi=H_{\delta}$, we obtain the following convergence results.
 \begin{theorem}\label{theorem-PGels}
  Suppose that $F$ is a KL function. Let $\{x^k\}_{k\in\mathbb{N}}$ be the sequence
  generated by Algorithm \ref{PGMenls} with $\delta\in(0,1/2)$. Then, the following
  statements hold.
  \begin{itemize}
    \item [(i)] If $\sum_{K_1\ni k=0}^{\infty}\sqrt{H_{\delta}(z^{\ell(k+1)})\!-\!H_{\delta}(z^{k+1})}<\infty$
                when $\liminf_{K_1\ni k\to\infty}\frac{H_{\delta}(z^{\ell(k)})-H_{\delta}(z^{\ell(k+1)})}{\|z^{k+1}-z^k\|^2}=0$,
                where $K_{1}\!:=\!\big\{k\in\mathbb{N}\,|\,
                H_{\delta}(z^{\ell(k+1)})\!-\!H_{\delta}(z^{k+1})\ge\frac{\alpha}{4}\|z^{k+1}\!-\!z^{k}\|^2\big\}$,
                then $\sum_{k=0}^{\infty}\|z^k\!-\!z^{k-1}\|<\infty$.

  \item[(ii)] Suppose that $F$ is a KL function of exponent $\theta\in[1/2,1)$, and that
               there exist $\widetilde{k}_0\in\mathbb{N}$ and constants $\widetilde{\gamma}>0$
               and $\widetilde{\tau}\in(0,1)$ such that for all $k\ge\widetilde{k}_0$,
               \begin{align*}
                \sum_{K_2\cup K_{31}\ni j=k}^{\infty}\!\sqrt{H_{\delta}(z^{\ell(j+1)})\!-\!H_{\delta}(z^{j+1})}
                \le\left\{\begin{array}{cl}
                \widetilde{\gamma}\widetilde{\tau}^k &{\rm if}\ \theta=1/2,\\
                \!\widetilde{\gamma}k^{\frac{1-\theta}{1-2\theta}}&{\rm if}\ \theta\!\in(1/2,1),
                \end{array}\right.
              \end{align*}
              where $K_{2}\!:=\!\big\{k\in\mathbb{N}\,|\,\frac{\alpha}{4}\|z^{k+1}\!-\!z^{k}\|^2\!
               \le H_{\delta}(z^{\ell(k+1)})\!-\!H_{\delta}(z^{k+1})<\frac{\alpha}{4}\|z^{k+1}\!-\!z^{k}\|^{\frac{1}{\theta}}\big\}$
              and $K_{31}\!:=\!\big\{k\in K_1\backslash K_{2}\,|\,\omega^*\!-\!H_{\delta}(z^{k+1})
              >\frac{\alpha}{8}\|z^{k+1}\!-\!z^{k}\|^{\frac{1}{\theta}}\big\}$ with $\omega^*=\lim_{k\to\infty}H_{\delta}(z^k)$.
              Then $\{z^k\}_{k\in\mathbb{N}}$ converges to some $\widetilde{z}\in{\rm crit}H_{\delta}$
              and there exist $\overline{k}\in\mathbb{N},\gamma>0$ and $\varrho\in(0,1)$ such that
              \begin{equation*}
                 \|z^k-\widetilde{z}\|\le\sum_{j=k}^{\infty}\|z^{j+1}\!-\!z^j\|
                 \le\left\{\begin{array}{cl}
                 \gamma\varrho^{k} &{\rm if}\ \theta=1/2,\\
                 \gamma k^{\frac{1-\theta}{1-2\theta}}&{\rm if}\ \theta\in(1/2,1)
                 \end{array}\right.\ \ {\rm for\ all}\ k\ge\overline{k}.
              \end{equation*}
  \end{itemize}
 \end{theorem}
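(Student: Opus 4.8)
The plan is to read this theorem as a direct specialization of the abstract convergence results of Section~\ref{sec3} to the potential $\Phi=H_{\delta}$, so that essentially all the analytic content is already in place and only the verification of hypotheses and the matching of constants remain. First I would record that, by lines~2--6 of Algorithm~\ref{PGMenls}, the line search acceptance criterion in Step~5 is exactly condition H1 for $\Phi=H_{\delta}$ with $a=\alpha/2$, and that Lemma~\ref{lemma-PGMenls}~(iii) supplies condition H2 with $b=\sqrt{2}\big[(L_{\!f}+\tau_{\rm min}^{-1})(1+\beta_{\rm max})+2\delta\big]$. Since $F$ is coercive and bounded below, so is $H_{\delta}$, and Lemma~\ref{lemma-PGMenls}~(i)--(ii) establish conditions \eqref{Phi-cond1}--\eqref{Phi-cond2} for $H_{\delta}$. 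Hence $\{z^k\}_{k\in\mathbb{N}}$ lies exactly within the framework of Section~\ref{sec3}.

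For part~(i), I would invoke Theorem~\ref{KL-converge} with $\Phi=H_{\delta}$. The only point requiring care is the substitution $a=\alpha/2$, under which $a/2=\alpha/4$: the index set $K_1$ and the growth hypothesis \eqref{assump0} then coincide verbatim with the set $K_1$ and the summability assumption stated in part~(i), so the conclusion $\sum_{k}\|z^{k+1}-z^k\|<\infty$ follows at once.

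For part~(ii) the extra ingredient is the KL exponent. By \cite[Theorem~3.6]{LiPong18}, if $F$ is a KL function of exponent $\theta\in[1/2,1)$ then so is $H_{\delta}$ with the same exponent, so $H_{\delta}$ is associated to $\varphi(t)=ct^{1-\theta}$ and I would apply Theorem~\ref{KL-rate}~(ii). Substituting $a=\alpha/2$ (whence $a/2=\alpha/4$ and $a/4=\alpha/8$), the sets $K_2,K_{31}$ and the rate hypothesis \eqref{rate-cond} specialize exactly to those in part~(ii), with $\Phi^*=\omega^*=\lim_{k}H_{\delta}(z^k)$. When $\theta=1/2$ the linear branch $\theta\in(0,1/2]$ of Theorem~\ref{KL-rate} applies and yields the $\gamma\varrho^{k}$ bound; when $\theta\in(1/2,1)$ the sublinear branch yields the $\gamma k^{(1-\theta)/(1-2\theta)}$ bound. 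Convergence of $\{z^k\}$ to some $\widetilde{z}$ and the estimate $\|z^k-\widetilde{z}\|\le\sum_{j\ge k}\|z^{j+1}-z^j\|$ are immediate from \eqref{aim-ineq-rate}, while Lemma~\ref{lemma1-Phi}~(v) gives $\widetilde{z}\in\varpi(z^0)\subseteq{\rm crit}\,H_{\delta}$.

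The main thing to watch is purely the bookkeeping of constants between the abstract setting and the algorithm, specifically that the decrease coefficient $\alpha/2$ in Step~5 plays the role of $a$, so that the thresholds $a/2$ and $a/4$ appear as $\alpha/4$ and $\alpha/8$. There is no substantive analytic obstacle here: once H1--H2 and \eqref{Phi-cond1}--\eqref{Phi-cond2} are confirmed for $H_{\delta}$ (already done in Lemma~\ref{lemma-PGMenls}) and the KL exponent of $H_{\delta}$ is inherited from that of $F$, the theorem is a transcription of Theorems~\ref{KL-converge} and \ref{KL-rate}.
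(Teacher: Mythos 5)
Your proposal is correct and follows exactly the route the paper takes: the paper's entire justification for this theorem is the one-line remark that $H_{\delta}$ inherits the KL exponent of $F$ via \cite[Theorem 3.6]{LiPong18} and that Lemma \ref{lemma-PGMenls} verifies H1--H2 and \eqref{Phi-cond1}--\eqref{Phi-cond2}, so Theorems \ref{KL-converge} and \ref{KL-rate} apply with $\Phi=H_{\delta}$. Your explicit bookkeeping of $a=\alpha/2$ (hence thresholds $\alpha/4$ and $\alpha/8$) and $b=\sqrt{2}\big[(L_{\!f}+\tau_{\rm min}^{-1})(1+\beta_{\rm max})+2\delta\big]$ is exactly the specialization the paper leaves implicit.
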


 From the expression of $H_{\delta}$ and $\lim_{k\to\infty}\|x^k-x^{k-1}\|=0$,
 we get $\lim_{k\to\infty}H_{\delta}(z^k)=\lim_{k\to\infty}F(x^k)$.
 Then, by Proposition \ref{KL-Xirate} with $\Phi=H_{\delta}$,
 the following convergence rate result holds for $\{F(x^k)\}_{k\in\mathbb{N}}$.
 \begin{corollary}\label{obj-rate1}
  If $F$ is a KL function of exponent $\theta\in[1/2,1)$, then there exist
  $\widehat{\varrho}\in(0,1)$ and $\gamma'>0$ such that for all sufficiently large $k$,
  the following inequality holds with $\omega^*=\lim_{k\to\infty}F(x^k)$:
  \[
    F(x^k)-\omega^*\le\left\{\begin{array}{cl}
    \gamma'\widehat{\varrho}^{\lceil\frac{k-1}{m+1}\rceil} &{\rm if}\ \theta=1/2,\\
    \gamma'{k}^{\frac{1-\theta}{1-2\theta}}&{\rm if}\ \theta\in(1/2,1).
    \end{array}\right.
  \]
 \end{corollary}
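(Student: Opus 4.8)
The plan is to transfer the convergence-rate estimate already available for the potential function $H_\delta$ to the genuine objective value $F(x^k)$, exploiting the fact that $H_\delta$ dominates $F$ and that the two sequences share the same limit. First I would verify that $\Phi=H_\delta$ satisfies every hypothesis of Lemma \ref{KL-Xirate}. Since $F$ is assumed to be a KL function of exponent $\theta\in[1/2,1)$, \cite[Theorem 3.6]{LiPong18} guarantees that $H_\delta$ is again a KL function of exponent $\theta$, hence associated with some $\varphi(t)=ct^{1-\theta}$. The decrease condition H1 for $H_\delta$ holds by the line search in Steps 2--6 of Algorithm \ref{PGMenls}, while Lemma \ref{lemma-PGMenls} supplies condition H2 together with \eqref{Phi-cond1}--\eqref{Phi-cond2}. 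Thus Lemma \ref{KL-Xirate} applies verbatim to the sequence $\{z^k\}_{k\in\mathbb{N}}$.

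Next I would invoke the second estimate of Lemma \ref{KL-Xirate} with $\Phi=H_\delta$, which produces constants $\widehat\varrho\in(0,1)$ and $\gamma'>0$ such that, writing $\Phi^*=\lim_k H_\delta(z^k)$, for all sufficiently large $k$
\[
H_\delta(z^k)-\Phi^*\le\gamma'\widehat\varrho^{\lfloor\frac{k-1}{m+1}\rfloor}\ \ (\theta=1/2),\qquad
H_\delta(z^k)-\Phi^*\le\gamma'k^{\frac{1-\theta}{1-2\theta}}\ \ (\theta\in(1/2,1)).
\]
The remaining task is to replace $H_\delta(z^k)$ by $F(x^k)$ and $\Phi^*$ by $\omega^*=\lim_k F(x^k)$. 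From \eqref{Hdelta} we have $H_\delta(z^k)=F(x^k)+\frac{\delta}{2}\|x^k-x^{k-1}\|^2\ge F(x^k)$; and since $\lim_{k\to\infty}\|x^k-x^{k-1}\|=0$ by Lemma \ref{lemma1-Phi}(ii), the extrapolation term vanishes and the two limits coincide, i.e. $\omega^*=\Phi^*$. Consequently $F(x^k)-\omega^*\le H_\delta(z^k)-\Phi^*$, and substituting the displayed bounds yields the sublinear estimate immediately in the case $\theta\in(1/2,1)$.

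The only point requiring a small adjustment is the linear case $\theta=1/2$, where the target exponent is $\lceil\frac{k-1}{m+1}\rceil$ rather than the floor produced by Lemma \ref{KL-Xirate}. Since $\lfloor t\rfloor\ge\lceil t\rceil-1$ and $\widehat\varrho\in(0,1)$, one has $\widehat\varrho^{\lfloor t\rfloor}\le\widehat\varrho^{-1}\widehat\varrho^{\lceil t\rceil}$, so enlarging the constant to $\gamma'\widehat\varrho^{-1}$ converts the floor into the desired ceiling and completes the argument. I do not expect a genuine obstacle: the entire content reduces to the monotone domination $F\le H_\delta$ and the equality of the two limits, so the corollary is essentially a direct consequence of Lemma \ref{KL-Xirate} once the KL-exponent transfer through \cite{LiPong18} is in place.
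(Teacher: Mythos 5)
Your proposal is correct and follows essentially the same route as the paper: apply Lemma \ref{KL-Xirate} with $\Phi=H_\delta$ (whose hypotheses are supplied by Lemma \ref{lemma-PGMenls} and the KL-exponent transfer from \cite[Theorem 3.6]{LiPong18}), then use $F(x^k)\le H_\delta(z^k)$ together with $\lim_k H_\delta(z^k)=\lim_k F(x^k)$ to pass the rate to $F(x^k)-\omega^*$. Your explicit floor-to-ceiling adjustment of the exponent is a detail the paper leaves implicit, and it is handled correctly.
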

 \begin{remark}
  Yang \cite{Yang21} achieved the convergence rate of $\{F(x^k)\}_{k\in\mathbb{N}}$
  yielded by PGenls only for the monotone line search case (i.e., $m=0$)
  and the case without the extrapolation (i.e., $\beta_{\rm max}=0$),
  under the assumption that $F$ is a KL function of exponent $\theta\!\in[0,1)$.
  Here, Corollary \ref{obj-rate1} establishes the convergence rate of $\{F(x^k)\}_{k\in\mathbb{N}}$
  for $m>0,\beta_{\rm max}=0$, or $m=0,\beta_{\rm max}>0$, or $m\ge0,\beta_{\rm max}\ge 0$.
 \end{remark}

 \subsection{Numerical experiments for Algorithm \ref{PGMenls}}\label{sec4.2}

 We test the performance of Algorithm \ref{PGMenls} for solving the zero-norm
 regularized logistic regression problem. Given $a_i\in\mathbb{R}^p$ and
 $b_i\in\{-1,1\}$ for $i=1,2,\ldots,n$ with $n<p$, the zero-norm
 regularized logistic regression problem has the form
 \begin{equation}\label{L0-LRP}
  \min_{x=(\widetilde{x},x_0)\in\mathbb{R}^{p+1}}
  \Big\{F_{\lambda,\mu}(x):=\sum_{i=1}^n\log\big(1+\exp(-b_i(a_i^{\mathbb{T}}\widetilde{x}+x_0))\big)+\frac{\mu}{2}\|x\|^2
  +\lambda\|\widetilde{x}\|_0\Big\},
 \end{equation}
 where $\lambda>0$ is a regularization parameter and $\mu>0$ is a tiny constant.
 Let $\widetilde{A}\in\mathbb{R}^{n\times(p+1)}$ be a matrix with $i$th row given by
 $(a_i^{\mathbb{T}},1)$ and $h(z):=\sum_{i=1}^n\log(1+\exp(-b_iz_i))$ for $z\in\mathbb{R}^n$.
 Clearly, the problem \eqref{L0-LRP} takes the form of \eqref{Fprob} with
 $f(x)=h(\widetilde{A}x)+\frac{\mu}{2}\|x\|^2$ and $g(x)=\lambda\|\widetilde{x}\|_0$.
 One can check that $\nabla\!f$ is Lipschitz continuous with the constant
 $L_{\!f}=\frac{1}{4}\|\widetilde{A}\|$. Note that $F_{\lambda,\mu}$ is coercive and
 lower bounded, and it is also a KL function of exponent $1/2$ by \cite[Theorem 4.1]{WuPanBi21}.
 Since $g$ is lsc but discontinuous on its domain, the convergence results in \cite{Yang21}
 are inapplicable to the problem \eqref{L0-LRP}.

 All trials in the subsequent experiments are generated randomly by following
 the same way as in \cite[Section 4.1]{Wen17}. Fix a triple $(n,p,s)$.
 We first generate a matrix $A=[a_1;\cdots;a_n]\in\mathbb{R}^{n\times p}$
 with i.i.d. standard Gaussian entries. Then, we choose a subset $S\subset\{1,2,\ldots,p\}$ of
 size $s$ uniformly at random and generate an $s$-sparse vector $\widehat{x}\in\mathbb{R}^p$
 which has i.i.d. standard Gaussian entries on $S$ and zeros on $\{1,2,\ldots,p\}\backslash S$.
 Finally, we generate the vector $b\in\mathbb{R}^n$ by setting $b={\rm sign}(A\widehat{x}+\varepsilon e)$
 where $\varepsilon$ is chosen uniformly at random from $[0,1]$ and $e\in\mathbb{R}^p$ denotes
 the vector of all ones. For the subsequent experiments, we choose $\mu=10^{-10}$
 and the parameters of Algorithm \ref{PGMenls} as follows
 \begin{equation}\label{parameter1}
   \beta_{\rm max}=1,\,\alpha=10^{-5},\,\eta_2=0.1,\,
   \tau_{\rm min}={10^{-3}}/{[2(\alpha\!+\!\delta)\!+\!L_{\!f}]},\,\tau_{\rm max}=10^{6},\
   \tau_{0,0}={10}/{\|\widetilde{A}\|}.
 \end{equation}

 We evaluate the performances of different algorithms by using an evolution of
 objective values as in \cite{Gillis12,Yang18}. To introduce this evolution,
 let $F_{\lambda,\mu}(x^k)$ denote the objective value at $x^k$ yielded by an algorithm,
 and let $F_{\lambda,\mu}^{\rm min}$ denote the minimum of the terminating objective values
 obtained by all algorithms in a trial. By letting $T(k)$ denote
 the total computation time of an algorithm to yield $x^k$,
 we define the evolution of objective values obtained by this algorithm with respect to time $t$ as
 \[
   E(t):=\min\bigg\{\frac{F_{\lambda,\mu}(x^k)-F_{\lambda,\mu}^{\rm min}}{F_{\lambda,\mu}(x^0)-F_{\lambda,\mu}^{\rm min}}
   \,|\,k\in\big\{i\!: T(i)\le t\big\}\bigg\}.
 \]
 Note that $E(t)\in[0,1]$ and it is nonincreasing with respect to $t$.
 It can be viewed as a normalized measure of the reduction of the objective value
 with respect to time. One can take the average of $E(t)$ over several independent trials,
 and plot the average $E(t)$ within time $t$ for an algorithm.
  \begin{figure}[h]
  \vspace{-0.3cm}
  \begin{subfigure}[b]{0.5\textwidth}
   \centering
  \includegraphics[width=6cm,height=5cm]{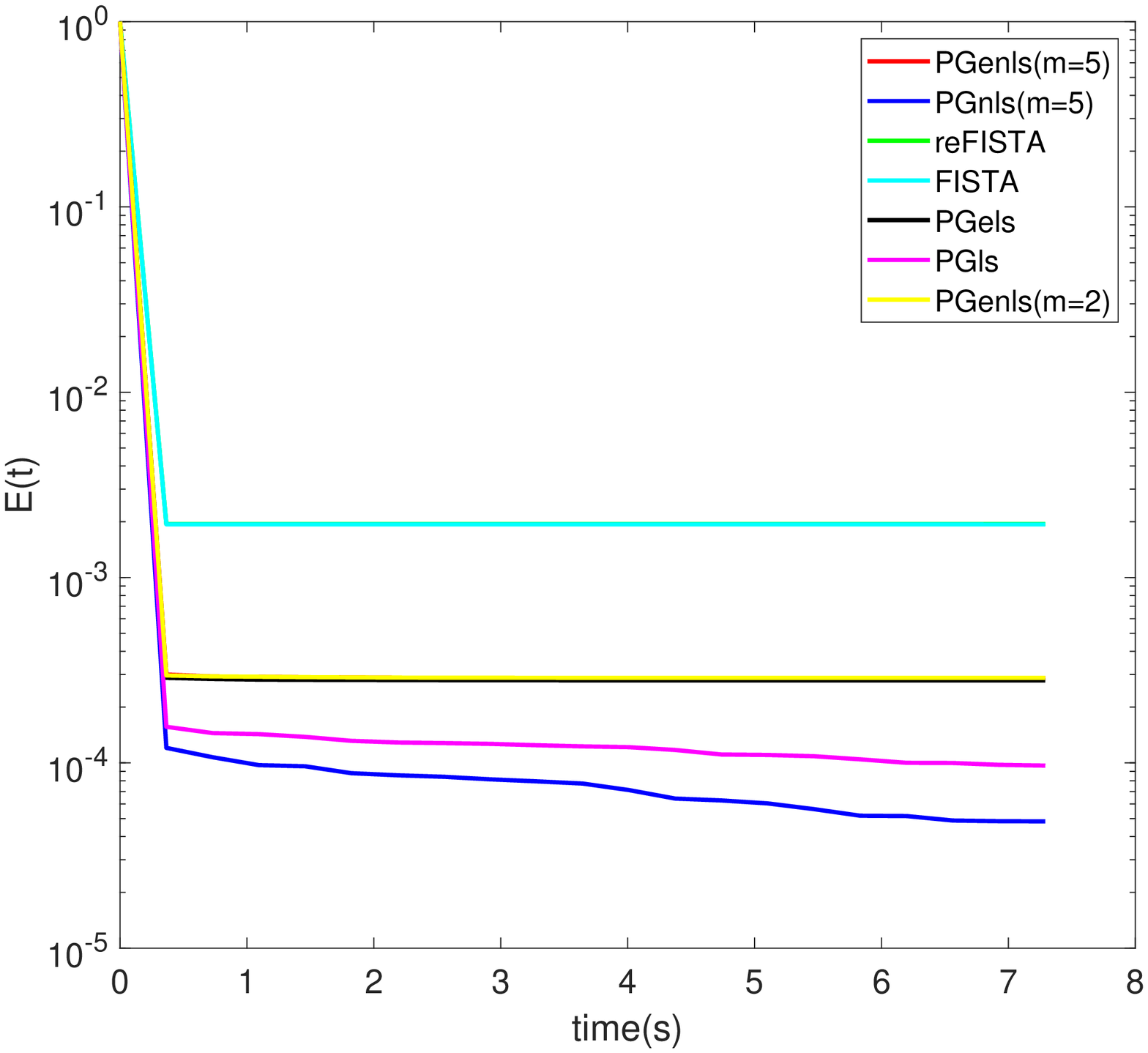}
   \caption{$\lambda=0.001$}
   \label{fig2:1_1}
  \end{subfigure}
  \begin{subfigure}[b]{0.5\textwidth}
   \centering
  \includegraphics[width=6cm,height=5cm]{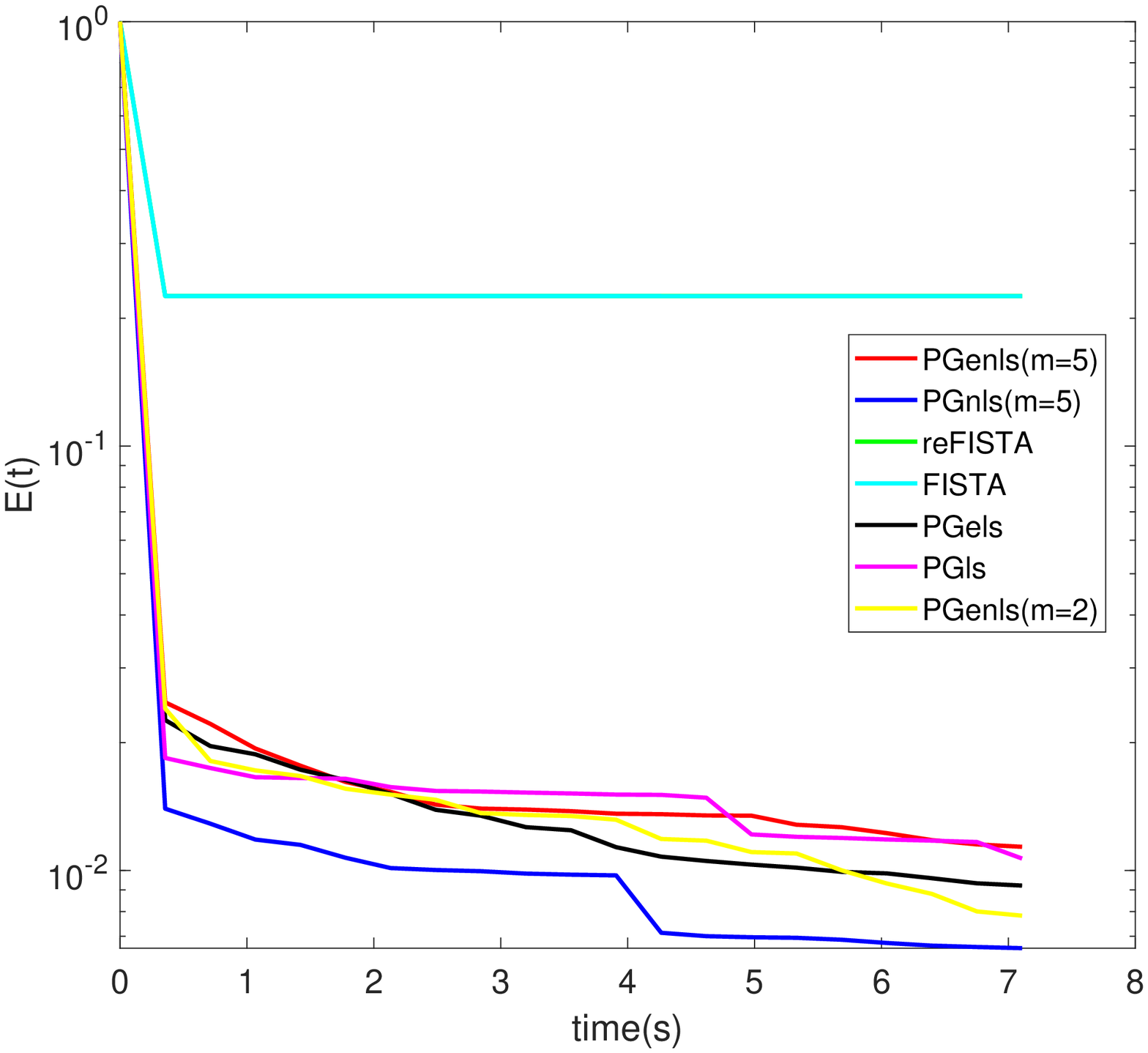}
   \caption{$\lambda=0.1$}
   \label{fig2:1_2}
  \end{subfigure}
  \newline
  \begin{subfigure}[b]{0.5\textwidth}
   \centering
  \includegraphics[width=6cm,height=5cm]{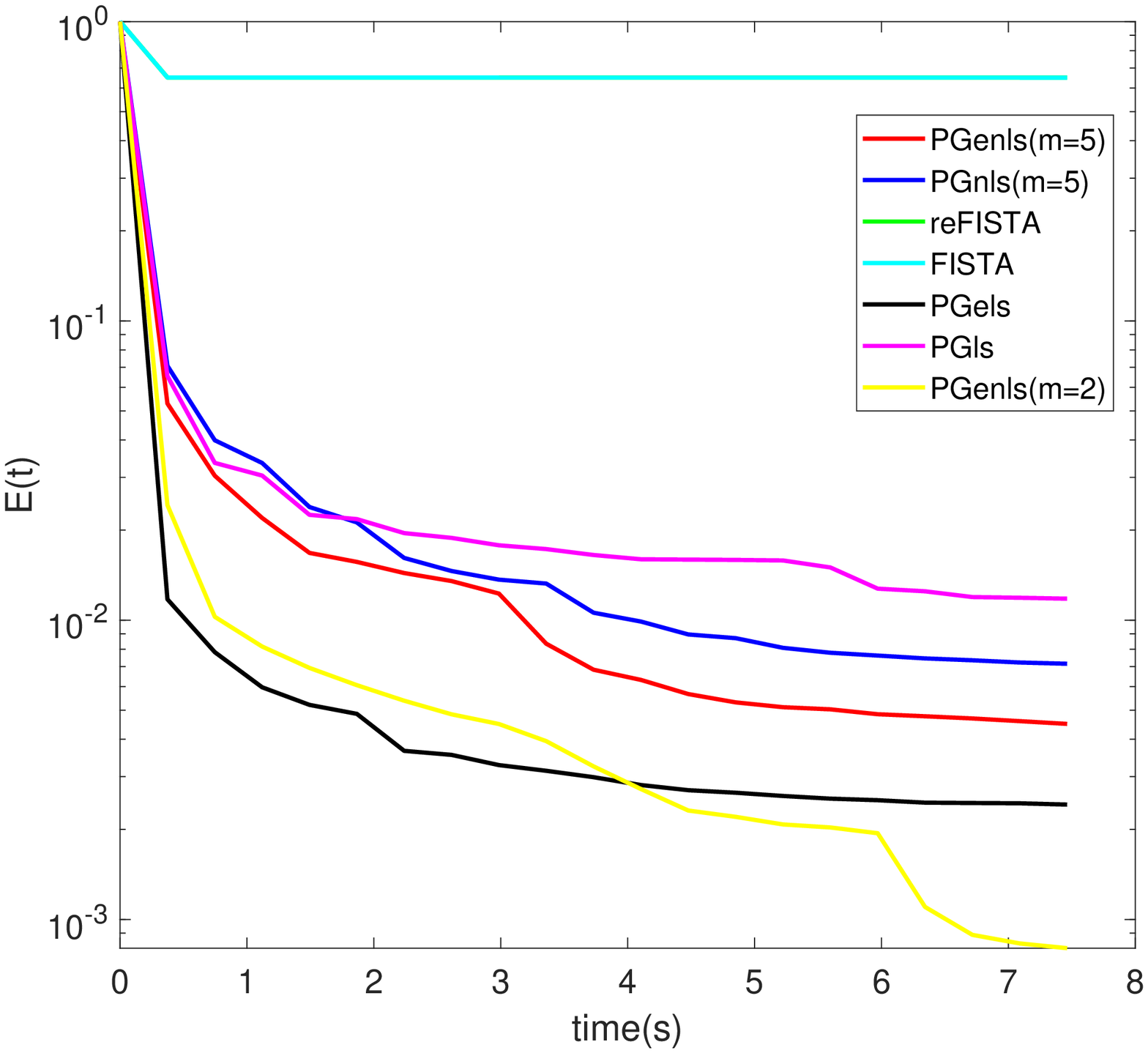}
   \caption{$\lambda=1$}
   \label{fig2:2_1}
  \end{subfigure}
  \begin{subfigure}[b]{0.5\textwidth}
   \centering
  \includegraphics[width=6cm,height=5cm]{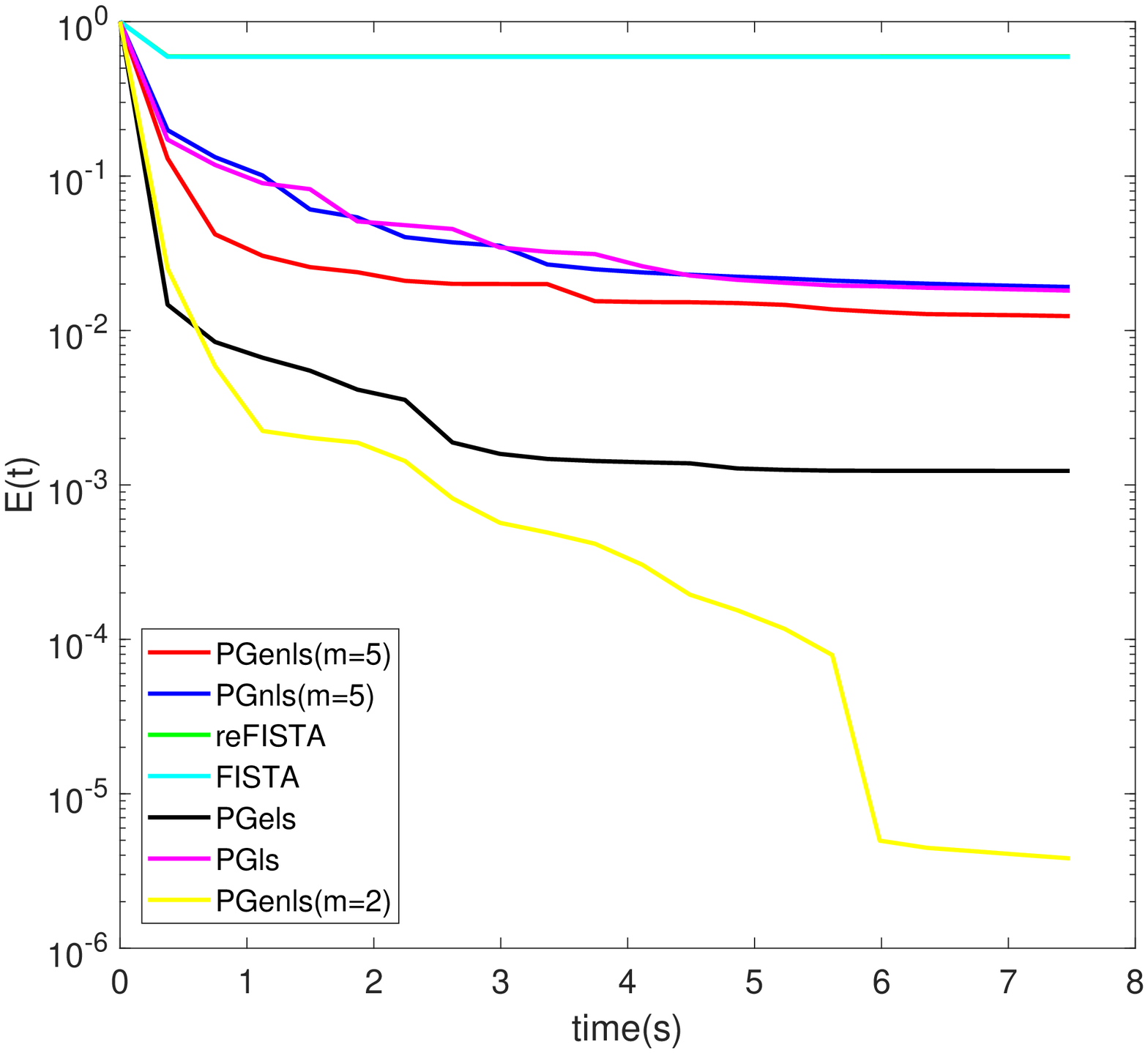}
   \caption{$\lambda=10$}
   \label{fig2:2_2}
  \end{subfigure}
  \caption{\small Several methods for solving the nonconvex nonsmooth problem \eqref{L0-LRP} with different $\lambda$}
  \label{fig1}
 \end{figure}

 Preliminary tests show that $\delta$ and $\eta_1$ have a great influence
 on the performance of Algorithm \ref{PGMenls}, so we first evaluate Algorithm \ref{PGMenls}
 with different $(\delta,\eta_1)$ for $\delta\in\{0.1,0.05,0.01,0.005,0.001\}$
 and $\eta_1\in\{0.1,0.05,0.01,0.005,0.001\}$ by solving \eqref{L0-LRP}
 with $(n,p,s)=(300,3000,30)$ and $\lambda=0.1$. Numerical results for $10$ independent
 trials indicate that Algorithm \ref{PGMenls} with $\delta\in\{0.01,0.005,0.001\}$ and
 $\eta_1\in\{0.1,0.05,0.01\}$ have better performance. Now we apply Algorithm \ref{PGMenls}
 to the problem \eqref{L0-LRP} with $(n,p,s)=(500,5000,50)$
 and different $\lambda$, and compare the performance of Algorithm \ref{PGMenls}
 for $\delta=0.01,\eta_1=0.05$ (PGenls) with the performances of
 Algorithm \ref{PGMenls} for $\delta=0.01,\beta_{\rm max}=0$ (PGnls),
 Algorithm \ref{PGMenls} for $\delta=0.01,m=0$ (PGels), Algorithm \ref{PGMenls}
 for $\delta=0,\beta_{\rm max}=0$, $m=0$ (PGls), FISTA \cite{Wen17}
 and reFISTA \cite{Donoghue15}. Among others, we restart the iterates in reFISTA
 when $k\,{\rm mod}\,250=0$ or $\langle y^k-x^{k+1},x^{k+1}-x^k\rangle>0$.
 From Figure \ref{fig1}, we see that for $\lambda=10^{-3}$ and $0.1$, PGenls is remarkably
 superior to FISTA and reFISTA, and PGnls is superior to PGenls, which is comparable with
 PGels and PGls; for $\lambda=1$ and $10$, PGenls with $m=2$ has much better performance
 than PGenls with $m=5$ and PGels do, and PGnls is now close to PGls. This shows that
 the nonmonotone line search is more efficient for \eqref{L0-LRP} with a smaller
 $\lambda$ while the extrapolation is more efficient for \eqref{L0-LRP} with a larger $\lambda$.
 Note that the problem \eqref{L0-LRP} with a smaller $\lambda$ is more difficult than
 the one with a larger $\lambda$ because the latter will be strongly convex
 at a critical point due to high sparsity.

 Recall that the global convergence of the iterate sequence generated by Algorithm \ref{PGMenls}
 requires an assumption (see Theorem \ref{theorem-PGels} (i)). Figure \ref{fig2} indicates that
 this assumption can be satisfied in practical computation, where the curves are plotted
 by solving \eqref{L0-LRP} with $(n,p,s)=(500,5000,50)$ and $(\delta,\eta_1)=(0.01,0.05)$,
 the red line records the sum $\sum_{K_1\ni k=1}\sqrt{H_{\delta}(z^{\ell(k)})\!-\!H_{\delta}(z^k)}$,
 the blue line records the sum $\sum_{k=1}\frac{3000}{\sqrt{k^{2.1}}}$,
 and $K_1$ is defined as in Theorem \ref{theorem-PGels} (i) for $\alpha=10^{-5}$.
 \begin{figure}[h]
  \vspace{-0.3cm}
  \begin{subfigure}[b]{0.5\textwidth}
   \centering
  \includegraphics[width=6cm,height=5cm]{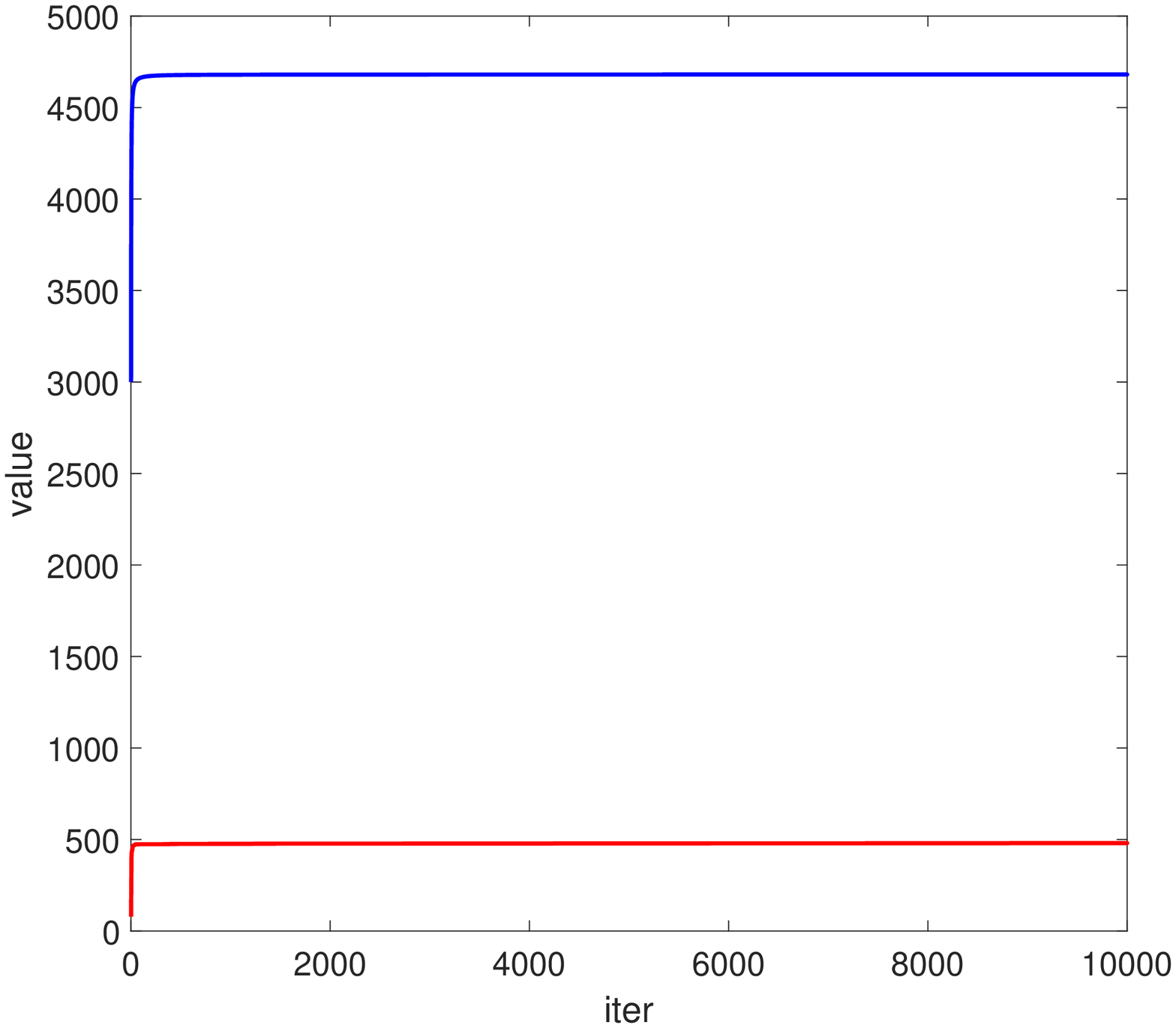}
   \caption{$\lambda=0.01$}
   \label{fig1:1_1}
  \end{subfigure}
  \begin{subfigure}[b]{0.5\textwidth}
   \centering
  \includegraphics[width=6cm,height=5cm]{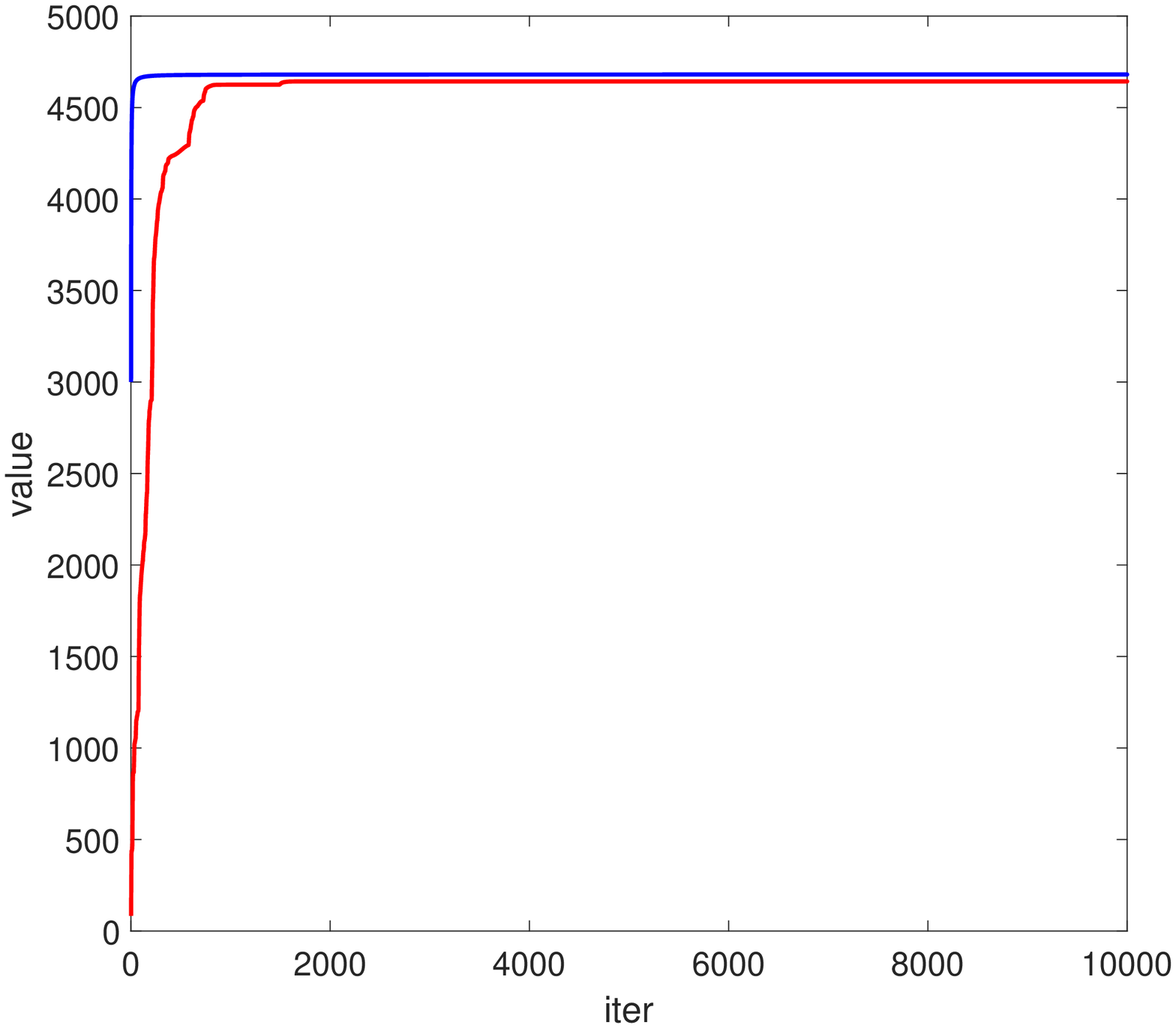}
   \caption{$\lambda=10$}
   \label{fig1:1_2}
  \end{subfigure}
  \caption{The assumption in Theorem \ref{theorem-PGels} (i) is illustrated in practical computation}
  \label{fig2}
 \end{figure}
 \section{Nonmotone line search PALM with extrapolation}\label{sec5}

 Let $f\!:\mathbb{X}\to\overline{\mathbb{R}}$ and $g\!:\mathbb{Y}\to\overline{\mathbb{R}}$
 be proper lsc functions, and let $H\!:\mathbb{X}\times\mathbb{Y}\to\mathbb{R}$ be a smooth
 function with the partial gradients $\nabla_{\!x}H(\cdot,y)$ and $\nabla_{\!y}H(x,\cdot)$
 being $L_1(y)$-Lipschitz and $L_2(x)$-Lipschitz, respectively. Consider the problem
 \begin{equation}\label{PALM-prob}
   \min_{x\in\mathbb{X},y\in\mathbb{Y}}\Psi(x,y):=f(x)+g(y)+H(x,y),
 \end{equation}
 where $f$ and $g$ are assumed to bounded below and $\Psi$ is assumed to be coercive
 and bounded below. Clearly, for any $(x^0,y^0)\in\mathbb{X}\times\mathbb{Y}$,
 the level set $\mathcal{L}_0:=\{(x,y)\in\mathbb{X}\times\mathbb{Y}\,|\, \Psi(x,y)\le\Psi(x^0,y^0)\}$
 is compact. In this section we develop a nonmonotone line search PALM with extrapolation
 (PALMenls), a nonmonotone line search accelerated version of the PALM in \cite{Bolte14},
  for solving the problem \eqref{PALM-prob}.

 For any given $\delta>0$ and any $z=(x,y,u,v)\in\mathbb{X}\times\mathbb{Y}\times\mathbb{X}\times\mathbb{Y}$,
 define the potential function
 \begin{equation}\label{Upsilonz}
  \Upsilon_{\!\delta}(z):=f(x)+g(y)+H(x,y)+({\delta}/{2})\|x-u\|^2+({\delta}/{2})\|y-v\|^2,
 \end{equation}
 and write $z^k\!:=\!(x^{k},y^{k},x^{k-1},y^{k-1})$ for each $k\in\mathbb{N}$.
 The iterates of PALMenls are described as below, where the constant
 $L\!:=\!\max_{(x,y)\in\mathcal{L}_0}\max(L_1(y),L_2(x))$
 depends on the initial $(x^0,y^0)$.
 \begin{algorithm}[h]
 \caption{\label{PALMenls}{\bf\,(Nonmonotone line search PALM with extrapolation)}}
 \textbf{Initialization:} Choose $m\in\mathbb{N},(x^0,y^0)\in{\rm dom}\Psi,
 \delta\in(0,1),\alpha\in(0,{\delta}/{2}], 0<\underline{\tau}<\!\frac{1}{L+\delta+2\alpha}\!<\overline{\tau}$,
 $\beta_{\rm max}\ge 0,\eta\in(0,1),\eta_1,\eta_2\in(0,1)$.
 Let $(x^{-1},y^{-1})\!=(x^0,y^0),z^{0}\!:=(x^0,y^0,x^{-1},y^{-1})$. Set $k:=0$.\\
 \textbf{while} the stopping condition is not satisfied \textbf{do}
 \begin{enumerate}
  \item  Choose $\beta_{k,0}\in[0,\beta_{\rm max}]$, $\tau_{1,k}^{0}\in[\underline{\tau},\overline{\tau}]$ and
         $\tau_{2,k}^{0}\in[\underline{\tau},\overline{\tau}]$.

  \item  \textbf{For} $k=0,1,2,\ldots$ \textbf{do}

  \item  \quad Let $\beta_k=\beta_{k,0}\eta^{l},\tau_{1,k}=\max\{\tau_{1,k}^0\eta_1^{l},\underline{\tau}\}$ and
               $\tau_{2,k}=\max\{\tau_{2,k}^0\eta_2^{l},\underline{\tau}\}$.

  \item  \quad Let $\widetilde{x}^k=x^k\!+\!\beta_{k}(x^k\!-\!x^{k-1})$ and compute
         $x^{k+1}\in\mathcal{P}_{\!\tau_{1,k}}f(\widetilde{x}^k\!-\!\tau_{1,k}\nabla_{\!x}H(\widetilde{x}^{k},y^k))$.

  \item  \quad Let $\widetilde{y}^k=y^k\!+\!\beta_{k}(y^k\!-\!y^{k-1})$ and compute
         $y^{k+1}\in\mathcal{P}_{\!\tau_{2,k}}g(\widetilde{y}^k\!-\!\tau_{2,k}\nabla_{\!y} H(x^{k+1},\widetilde{y}^k))$.

  \item  \quad If $\Upsilon_{\!\delta}(z^{k+1})\le\max_{j=[k-m]_{+},\ldots,k}\Upsilon_{\!\delta}(z^{j})
         -\frac{\alpha}{2}\|z^{k+1}\!-\!z^k\|^2$, go to Step 8.

  \item  \textbf{end for}

  \item  Set $k\leftarrow k+1$ and go to Step 1.
 \end{enumerate}
 \textbf{end (while)}
 \end{algorithm}
 \begin{remark}\label{remark-PALM}
  From Lemma \ref{PALM-descent} in Appendix, the line search steps
  on lines 2-7 of Algorithm \ref{PALMenls} are well defined.
  Similar to Algorithm \ref{PGMenls},  one can initialize the extrapolation parameter
  $\beta_{k,0}$ in Step 1 by the rule \eqref{Nesterov}, and initialize the step-sizes
  $\tau_{1,k}^0$ and $\tau_{2,k}^0$ for $k\ge 1$ by the following BB rule \cite{Barzilai88}:
  \begin{align}\label{tau1k0}
   \tau_{1,k}^0=\max\Big\{\min\Big\{\frac{\|x^k-x^{k-1}\|^2}{\langle x^k-x^{k-1},\Delta H_x^k\rangle},
    \frac{\langle x^k-x^{k-1},\Delta H_x^k\rangle}{\|\Delta H_x^k\|^2},\overline{\tau}\Big\},\underline{\tau}\Big\},\\
   \label{tau2k0}
   \tau_{2,k}^0=\max\Big\{\min\Big\{\frac{\|y^k-y^{k-1}\|^2}{\langle y^k-y^{k-1},\Delta H_y^k\rangle},
    \frac{\langle y^k-y^{k-1},\Delta H_y^k\rangle}{\|\Delta H_y^k\|^2},\overline{\tau}\Big\},\underline{\tau}\Big\},
  \end{align}
  where $\Delta H_{\!x}^k\!:=\!\nabla_{\!x}H(x^k,y^k)-\nabla_{\!x}H(x^{k-1},y^k)$
  and $\Delta H_{\!y}^k\!:=\!\nabla_{\!y}H(x^{k},y^k)-\nabla_{\!y}H(x^{k},y^{k-1})$.
 \end{remark}
 \subsection{Convergence results of Algorithm \ref{PALMenls}}\label{sec5.1}

 By Step 6 of Algorithm \ref{PALMenls}, the sequence $\{z^k\}_{k\in\mathbb{N}}$
 satisfies the condition H1 for $\Phi=\Upsilon_{\delta}$. By the proof of
 Lemma \ref{lemma1-Phi} (i) and $(x^{-1},y^{-1})=(x^0,y^0)$,
 $\{z^k\}_{k\in\mathbb{N}}\subseteq\{z\,|\,\Upsilon_{\!\delta}(z)\le\Upsilon_{\!\delta}(z^0)\}
 \subseteq\mathcal{L}_0\times\mathcal{L}_0$. Thus, $\{z^k\}_{k\in\mathbb{N}}$ is bounded
 by the compactness of $\mathcal{L}_0$. Let $B_{1}$ and $B_{2}$ be the ball
 centered at the origin containing $\{(1\!+\!2\beta_{\rm max})x^k\}_{k\in\mathbb{N}}$
 and $\{(1\!+\!2\beta_{\rm max})y^k\}_{k\in\mathbb{N}}$, respectively.
 Write $M:=\max_{x\ne x'\in B_1,y\ne y'\in B_2 }\frac{\|\nabla_x H(x,y)-\nabla_x H(x',y')\|}{\|(x,y)-(x',y')\|}$
 and $\overline{L}_2:=\max_{x\in B_1}\{L_2(x)\}$.

 The following lemma demonstrates that the function $\Phi=\Upsilon_{\delta}$
 satisfies the conditions \eqref{Phi-cond1}-\eqref{Phi-cond2}, and moreover,
 the sequence $\{z^k\}_{k\in\mathbb{N}}$ also satisfies
 the condition H2 with $\Phi=\Upsilon_{\delta}$.
 \begin{lemma}\label{lemma5.1}
  Let $\{(x^k,y^k)\}_{k\in\mathbb{N}}$ be generated by Algorithm \ref{PALMenls}. Then,
  the following results hold.
  \begin{itemize}
   \item [(i)] $\liminf_{k\to\infty}\Upsilon_{\delta}(z^{k})\ge\lim_{k\to\infty}\Upsilon_{\delta}(z^{\ell(k)})$.

   \item[(ii)] For each $\{z^{k_q}\}_{q\in\mathbb{N}}$ with $\lim_{q\to\infty}z^{k_q}\to \widehat{z}
               =(\widehat{x},\widehat{y},\widehat{x},\widehat{y})$,
                  $\limsup_{q\to\infty}\Upsilon_{\delta}(z^{k_q})\le\Upsilon_{\delta}(\widehat{z})$.

   \item[(iii)] There exists $w^{k}\in\partial\Upsilon_{\delta}(z^{k})$
                with $\|w^{k+1}\|\le\![2\delta\!+2\max(1,\beta_{\rm max})(M\!+\!2\underline{\tau}^{-1}\!+\!\overline{L}_2)]\|z^{k+1}\!-\!z^k\|$.
  \end{itemize}
 \end{lemma}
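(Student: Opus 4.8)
The plan is to follow the three-part template of Lemma \ref{lemma-PGMenls}, replacing the single proximal step of Algorithm \ref{PGMenls} by the two successive proximal steps (Steps 4--5) of Algorithm \ref{PALMenls}. The two optimality inequalities for the $x$- and $y$-updates, together with the smoothness of $H$, will furnish every estimate we need; the only structural novelty is the coupling introduced by the fact that the $y$-update is evaluated at the already updated $x^{k+1}$.

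For part (i), I would first record the per-step inequality analogous to \eqref{Hdeta-tempineq1}. Writing the prox characterizations in Steps 4--5 with comparison points $x^{\ell(k)-j-1}$ and $y^{\ell(k)-j-1}$ yields upper bounds on $f(x^{\ell(k)-j})-f(x^{\ell(k)-j-1})$ and $g(y^{\ell(k)-j})-g(y^{\ell(k)-j-1})$ in terms of partial gradients of $H$ and squared increments; adding these and using the continuity of $H$ to rewrite the $H$-difference bounds $\Upsilon_{\delta}(z^{\ell(k)-j})-\Upsilon_{\delta}(z^{\ell(k)-j-1})$ by a right-hand side that vanishes whenever the adjacent increments $z^{\ell(k)-j}-z^{\ell(k)-j-1}$ and the extrapolation terms $\beta(\cdot)$ vanish. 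With this in hand, the line search in Step 6 and the convergence of $\{\Upsilon_{\delta}(z^{\ell(k)})\}$ from Lemma \ref{lemma1-Phi}(i) give $\lim_{k\to\infty}\|z^{\ell(k)}-z^{\ell(k)-1}\|=0$, and I would then reproduce the nested induction on $i\in\{0,\ldots,\ell(k)-k+m\}$ establishing the pair of relations in \eqref{aim-ineq}; telescoping over $i$ delivers $\liminf_{k\to\infty}\Upsilon_{\delta}(z^{k-m-1})\ge\lim_{k\to\infty}\Upsilon_{\delta}(z^{\ell(k)})$, which is the assertion.

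For part (ii), given $z^{k_q}\to\widehat{z}$ I would invoke part (i) together with Lemma \ref{lemma1-Phi}(ii) to get $\lim_{k\to\infty}(z^{k+1}-z^k)=0$, whence $x^{k_q-1}\to\widehat{x}$, $y^{k_q-1}\to\widehat{y}$, and the extrapolated points $\widetilde{x}^{k_q-1},\widetilde{y}^{k_q-1}$ also converge to $\widehat{x},\widehat{y}$. Comparing $x^{k_q}$ with $\widehat{x}$ in Step 4 and $y^{k_q}$ with $\widehat{y}$ in Step 5 produces $f(x^{k_q})\le f(\widehat{x})+o(1)$ and $g(y^{k_q})\le g(\widehat{y})+o(1)$; since $H$ is continuous and the penalty terms vanish, passing to $\limsup$ gives $\limsup_{q\to\infty}\Upsilon_{\delta}(z^{k_q})\le\Upsilon_{\delta}(\widehat{z})$.

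For part (iii), I would read off a subgradient of $\Upsilon_{\delta}$ at $z^{k+1}=(x^{k+1},y^{k+1},x^k,y^k)$ from the two optimality inclusions: the $x$-block equals $\nabla_x H(x^{k+1},y^{k+1})-\nabla_x H(\widetilde{x}^k,y^k)+\delta(x^{k+1}-x^k)-\tau_{1,k}^{-1}(x^{k+1}-\widetilde{x}^k)$, the $y$-block equals $\nabla_y H(x^{k+1},y^{k+1})-\nabla_y H(x^{k+1},\widetilde{y}^k)+\delta(y^{k+1}-y^k)-\tau_{2,k}^{-1}(y^{k+1}-\widetilde{y}^k)$, and the $u,v$-blocks are $-\delta(x^{k+1}-x^k),-\delta(y^{k+1}-y^k)$. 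Bounding the gradient differences by $M$ and $\overline{L}_2$ over $B_1\times B_2$ (the balls are sized precisely so that $\widetilde{x}^k\in B_1$, $\widetilde{y}^k\in B_2$), using $\tau_{i,k}\ge\underline{\tau}$, $\|x^{k+1}-\widetilde{x}^k\|\le\|x^{k+1}-x^k\|+\beta_{\rm max}\|x^k-x^{k-1}\|$, and the fact that $\|z^{k+1}-z^k\|$ dominates each adjacent block increment yields the claimed constant. I expect part (i) to be the main obstacle, since the per-step estimate must correctly account for the two-block coupling (the $y$-step gradient is taken at $x^{k+1}$), whereas parts (ii) and (iii) are routine once the optimality inequalities are written down.
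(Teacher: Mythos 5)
Your proposal follows essentially the same route as the paper's proof: part (i) via the line-search descent inequality plus the two prox-optimality inequalities (with the $y$-step gradient taken at $x^{k+1}$), the nested induction establishing the analogue of \eqref{aim-ineq}, and telescoping; part (ii) by comparing with $\widehat{x},\widehat{y}$ in the prox subproblems and using $z^{k_q}-z^{k_q-1}\to0$; part (iii) by the same explicit subgradient of $\Upsilon_{\delta}$ and the same $M$, $\overline{L}_2$, $\underline{\tau}$, $\beta_{\rm max}$ bounds. The only cosmetic difference is that the paper keeps the $f$- and $g$-inequalities separate and invokes the continuity of $H$ at the end rather than assembling a single combined $\Upsilon_{\delta}$-difference inequality as in \eqref{Hdeta-tempineq1}.
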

 \begin{proof}
  {\bf(i)} From lines 2-7 of Algorithm \ref{PALMenls} and Lemma \ref{lemma1-Phi} (i)
  with $\Phi=\Upsilon_{\delta}$, it follows that $\lim_{k\to\infty}\Upsilon_{\!\delta}(z^{\ell(k)})=\omega^*$
  for some $\omega^*\in\mathbb{R}$, and for each $k\in\mathbb{N}$ and $i\in\{0,1,\ldots,\ell(k)\!-\!1\}$,
 \begin{equation}\label{Edescent-ineq1}
  \Upsilon_{\!\delta}(z^{\ell(k)-i})\le \Upsilon_{\delta}(z^{\ell(\ell(k)-i-1)})
  \!-\!\frac{\alpha}{2}\|z^{\ell(k)-i}\!-\!z^{\ell(k)-i-1}\|^2.
 \end{equation}
  In addition, from the definition of $x^{k}$, for each $k\in\mathbb{N}$ and $i\in\{0,1,\ldots,\ell(k)\!-\!1\}$,
 \begin{align}\label{Edescent-ineq2}
   &\langle\nabla_xH(\widetilde{x}^{\ell(k)-i-1},y^{\ell(k)-i-1}),x^{\ell(k)-i}-x^{\ell(k)-i-1}\rangle\!
     +\frac{1}{2\tau_{1,\ell(k)-i-1}}\|x^{\ell(k)-i}\!-\!\widetilde{x}^{\ell(k)-i-1}\|^2\nonumber\\
   &\le f(x^{\ell(k)-i-1})-f(x^{\ell(k)-i})
     +\frac{1}{2\tau_{1,\ell(k)-i-1}}\|x^{\ell(k)-i-1}\!-\!\widetilde{x}^{\ell(k)-i-1}\|^2.
 \end{align}
 While from the definition of $y^{k}$, for each $k\in\mathbb{N}$ and $i\in\{0,1,\ldots,\ell(k)\!-\!1\}$,
 \begin{align}\label{Edescent-ineq3}
   &\langle\nabla_yH({x}^{\ell(k)-i},\widetilde{y}^{\ell(k)-i-1}),y^{\ell(k)-i}-y^{\ell(k)-i-1}\rangle\!
     +\frac{1}{2\tau_{2,\ell(k)-i-1}}\|y^{\ell(k)-i}\!-\!\widetilde{y}^{\ell(k)-i-1}\|^2\nonumber\\
   &\le g(y^{\ell(k)-i-1})-g(y^{\ell(k)-i})
   +\frac{1}{2\tau_{2,\ell(k)-i-1}}\|y^{\ell(k)-i-1}\!-\!\widetilde{y}^{\ell(k)-i-1}\|^2.
 \end{align}
 In order to achieve the desired result, we first argue by induction that for each $j\in\mathbb{N}$
 \begin{equation}\label{Edeta-ineq4}
  \liminf_{k\to\infty}\Upsilon_{\delta}(z^{\ell(k)-j})\ge\lim_{k\to\infty}\Upsilon_{\delta}(z^{\ell(k)})
  \ \ {\rm and}\ \ \lim_{k\rightarrow\infty}\|z^{\ell(k)-j}-z^{\ell(k)-j-1}\|=0.
 \end{equation}
 Passing the limit $k\rightarrow\infty$ to \eqref{Edescent-ineq1} with $i=0$
 and using $\lim_{k\to\infty}\Upsilon_{\delta}(z^{\ell(k)})=\omega^*$, we obtain
 $\lim_{k\rightarrow\infty}\|z^{\ell(k)}-z^{\ell(k)-1}\|=0$. By combining this limit
 with the boundedness of $\{z^k\}_{k\in\mathbb{N}}$ and
 passing the limit $k\rightarrow\infty$ to \eqref{Edescent-ineq2}
 and \eqref{Edescent-ineq3} with $i=0$ yields
 $\liminf_{k\to\infty}f(x^{\ell(k)-1})\ge\liminf_{k\to\infty}f(x^{\ell(k)})$
 and $\liminf_{k\to\infty}g(y^{\ell(k)-1})\ge\liminf_{k\to\infty}g(y^{\ell(k)})$.
 By the continuity of $H$, the inequality in \eqref{Edeta-ineq4} holds
 for $j=1$. Then, passing the limit $k\rightarrow\infty$ to \eqref{Edescent-ineq1}
 with $i=1$ and using the inequality in \eqref{Edeta-ineq4} for $j=1$ yields that
 the equality in \eqref{Edeta-ineq4} holds for $j=1$.
 Now suppose that the relations in \eqref{Edeta-ineq4} hold for some $j\ge1$.
 Since $\liminf_{k\to\infty}\Upsilon_{\delta}(z^{\ell(k)-j})\ge\omega^*$
 and $\lim_{k\rightarrow\infty}\|z^{\ell(k)-j}-z^{\ell(k)-j-1}\|=0$,
 from \eqref{Edescent-ineq2} and \eqref{Edescent-ineq3} for $i=j$
 and the continuity of $H$, we obtain the inequality in \eqref{Edeta-ineq4}
 for $j+1$. Then, passing the limit $k\rightarrow\infty$ to \eqref{Edescent-ineq1}
 with $i=j+1$ and using the inequality in \eqref{Edeta-ineq4} for $j+1$ yields that
 the equality in \eqref{Edeta-ineq4} holds for $j+1$.
 Thus, the relations in \eqref{Edeta-ineq4} hold.
 From \eqref{Edescent-ineq2} and \eqref{Edescent-ineq3},
 \begin{align*}
   f(x^{\ell(k)})
  &\le f(x^{k-m-1})+\sum_{i=0}^{\ell(k)-(k-m)}\Big[
  \langle\nabla_xH(\widetilde{x}^{\ell(k)-i-1},y^{\ell(k)-i-1}),x^{\ell(k)-i-1}-x^{\ell(k)-i}\rangle \\
  &\quad\ -\frac{1}{2\tau_{1,\ell(k)-i-1}}\|x^{\ell(k)-i}\!-\!\widetilde{x}^{\ell(k)-i-1}\|^2
  +\frac{1}{2\tau_{1,\ell(k)-i-1}}\|x^{\ell(k)-i-1}\!-\!\widetilde{x}^{\ell(k)-i-1}\|^2\Big],\\
   g(y^{\ell(k)})
  &\le g(y^{k-m-1})+\sum_{i=0}^{\ell(k)-(k-m)}\Big[
  \langle\nabla_yH(x^{\ell(k)-i},\widetilde{y}^{\ell(k)-i-1}),y^{\ell(k)-i-1}-y^{\ell(k)-i}\rangle \\
  &\quad\ -\frac{1}{2\tau_{2,\ell(k)-i-1}}\|y^{\ell(k)-i}\!-\!\widetilde{y}^{\ell(k)-i-1}\|^2
  +\frac{1}{2\tau_{2,\ell(k)-i-1}}\|y^{\ell(k)-i-1}\!-\!\widetilde{y}^{\ell(k)-i-1}\|^2\Big].
 \end{align*}
 Recall that $\lim_{k\rightarrow\infty}\|z^{\ell(k)-j}-z^{\ell(k)-j-1}\|=0$ for all $j\in\mathbb{N}$.
 Passing the limit $k\rightarrow\infty$ to the last two inequalities yields $\liminf_{k\to\infty}f(x^{k-m-1})\ge\liminf_{k\to\infty}f(x^{\ell(k)})$
 and $\liminf_{k\to\infty}g(y^{k-m-1})\ge\liminf_{k\to\infty}g(y^{\ell(k)})$.
 The result then follows by the continuity of $H$.

 \noindent
 {\bf(ii)} Fix any $k\in\mathbb{N}$. For each $q\in\mathbb{N}$, from the definition of $x^{k_q}$
 and $y^{k_q}$, it follows that
  \begin{align*}
   &\langle\nabla_xH(\widetilde{x}^{k_q-1},y^{k_q-1}),x^{k_q}-\widehat{x}\rangle\!
     +\frac{1}{2\tau_{1,k_q-1}}\|x^{k_q}\!-\!\widetilde{x}^{k_q-1}\|^2+f(x^{k_q})
   \le\frac{1}{2\tau_{1,k_q-1}}\|\widehat{x}\!-\!\widetilde{x}^{k_q-1}\|^2+f(\widehat{x}),\\
   &\langle\nabla_yH({x}^{k_q},\widetilde{y}^{k_q-1}),y^{k_q}-\widehat{y}\rangle\!
     +\frac{1}{2\tau_{2,k_q-1}}\|y^{k_q}\!-\!\widetilde{y}^{k_q-1}\|^2+g(y^{k_q})
   \le\frac{1}{2\tau_{2,k_q-1}}\|\widehat{y}\!-\!\widetilde{y}^{k_q-1}\|^2+g(\widehat{y}).
 \end{align*}
  Note that $\lim_{q\to\infty}z^{k_q}-z^{k_q-1}=0$ by combining part (i)
  with Lemma \ref{lemma1-Phi} (ii). From the last two inequalities and
  the continuous differentiability of $H$, we obtain the desired result.

 \noindent
 {\bf(iii)} For each $k\in\mathbb{N}$, from the optimality conditions of $x^{k+1}$ and $y^{k+1}$, it follows that
 \[
   0\in\!\nabla_x H(\widetilde{x}^k,y^k)\!+\!\tau_{1,k}^{-1}(x^{k+1}\!-\!\widetilde{x}^k)+\partial f(x^{k+1}),
   0\in\!\nabla_x H(x^{k+1},\widetilde{y}^k)\!+\!\tau_{2,k}^{-1}(y^{k+1}\!-\!\widetilde{y}^k)+\partial g(y^{k+1}).
 \]
  By comparing with the expression of $\partial\Upsilon_{\delta}(z^k)$, it is not hard to obtain that
  \[
    w^k:=\left(\begin{matrix}
     \nabla_x H(x^k,y^k)\!-\!\nabla_x H(\widetilde{x}^{k-1},y^{k-1})
     -\frac{1}{\tau_{1,k-1}}(x^{k}\!-\!\widetilde{x}^{k-1})+\delta(x^{k}-x^{k-1})\\
     \nabla_y H(x^k,y^k)\!-\!\nabla_y H(x^{k},\widetilde{y}^{k-1})
     -\frac{1}{\tau_{2,k-1}}(y^{k}\!-\!\widetilde{y}^{k-1})+\delta(y^{k}-y^{k-1})\\
      \delta(x^{k-1}-x^{k})\\
      \delta(y^{k-1}-y^{k})
     \end{matrix}\right)\in\partial\Upsilon_{\delta}(z^{k}).
  \]
  By the expression of $w^{k+1}$ and the discussion in the paragraph of this section,
  it follows that
 \begin{align*}
  \|w^{k+1}\|\le (M+\tau_{1,k}^{-1})\|x^{k+1}-\widetilde{x}^k\|+M\|y^{k+1}-y^k\|
  +(\overline{L}_2+\tau_{2,k}^{-1})\|y^{k+1}-\widetilde{y}^k\|+2\delta\|z^{k+1}-z^k\|,
 \end{align*}
 which by the expressions of $\widetilde{x}^k$ and $\widetilde{y}^k$ implies the result.
  The proof is then completed.
 \end{proof}

 By invoking \cite[Theorem 3.6]{LiPong18}, if $\Psi$ is a KL function of exponent $\theta\in[1/2,1)$,
 then $\Upsilon_{\delta}$ is also a KL function of exponent $\theta\in[1/2,1)$.
 Thus, by combining Lemma \ref{lemma5.1} with Theorem \ref{KL-converge} and \ref{KL-rate}
 for $\Phi=\Upsilon_{\!\delta}$,
 we obtain the following convergence results for the iterate sequence of Algorithm \ref{PALMenls}.
 \begin{theorem}\label{converge-PALMels}
  Suppose that $\Psi$ is a KL function. Let $\{(x^k,y^k)\}_{k\in\mathbb{N}}$ be the sequence
  generated by Algorithm \ref{PALMenls} with $\delta\in(0,1)$. Then, the following
  statements hold.
  \begin{itemize}
    \item [(i)] If $\sum_{K_1\ni k=0}^{\infty}\sqrt{\Upsilon_{\!\delta}(z^{\ell(k+1)})\!-\!\Upsilon_{\delta}(z^{k+1})}<\infty$
                when $\liminf_{K_1\ni k\to\infty}\frac{\Upsilon_{\delta}(z^{\ell(k)})-\Upsilon_{\delta}(z^{\ell(k+1)})}
                {\|z^{k+1}-z^k\|^2}=0$,
                where $K_{1}\!:=\!\big\{k\in\mathbb{N}\,|\,
                \Upsilon_{\delta}(z^{\ell(k+1)})\!-\!\Upsilon_{\delta}(z^{k+1})\ge\frac{\alpha}{4}\|z^{k+1}\!-\!z^{k}\|^2\big\}$,
                then $\sum_{k=0}^{\infty}\|z^k\!-\!z^{k-1}\|<\infty$.

  \item[(ii)] Suppose that $\Psi$ is a KL function of exponent $\theta\in[1/2,1)$, and that
               there exist $\widetilde{k}_0\in\mathbb{N}$ and constants $\widetilde{\gamma}>0$
               and $\widetilde{\tau}\in(0,1)$ such that for all $k\ge\widetilde{k}_0$,
               \begin{align*}
                \sum_{K_2\cup K_{31}\ni j=k}^{\infty}\!\sqrt{\Upsilon_{\delta}(z^{\ell(j+1)})\!-\!\Upsilon_{\delta}(z^{j+1})}
                \le\left\{\begin{array}{cl}
                \widetilde{\gamma}\widetilde{\tau}^k &{\rm if}\ \theta=1/2,\\
                \!\widetilde{\gamma}k^{\frac{1-\theta}{1-2\theta}}&{\rm if}\ \theta\!\in(1/2,1),
                \end{array}\right.
              \end{align*}
              where $K_{2}\!:=\!\big\{k\in\mathbb{N}\,|\,\frac{\alpha}{4}\|z^{k+1}\!-\!z^{k}\|^2\!
               \le \Upsilon_{\delta}(z^{\ell(k+1)})\!-\!\Upsilon_{\delta}(z^{k+1})
               <\frac{\alpha}{4}\|z^{k+1}\!-\!z^{k}\|^{\frac{1}{\theta}}\big\}$
              and $K_{31}\!:=\!\big\{k\in K_1\backslash K_{2}\,|\,\omega^*\!-\!\Upsilon_{\delta}(z^{k+1})
              >\frac{\alpha}{8}\|z^{k+1}\!-\!z^{k}\|^{\frac{1}{\theta}}\big\}$ with $\omega^*=\lim_{k\to\infty}\Upsilon_{\delta}(z^k)$.
              Then $\{z^k\}_{k\in\mathbb{N}}$ converges to some $\widetilde{z}\in{\rm crit}\Upsilon_{\delta}$
              and there exist $\overline{k}\in\mathbb{N},\gamma>0$ and $\varrho\in(0,1)$ such that
              \begin{align*}
                 \|z^k-\widetilde{z}\|\le\sum_{j=k}^{\infty}\|z^{j+1}\!-\!z^j\|
                 \le\left\{\begin{array}{cl}
                 \gamma\varrho^{k} &{\rm if}\ \theta=1/2,\\
                 \gamma k^{\frac{1-\theta}{1-2\theta}}&{\rm if}\ \theta\in(1/2,1)
                 \end{array}\right.\ \ {\rm for\ all}\ k\ge\overline{k}.
              \end{align*}
  \end{itemize}
 \end{theorem}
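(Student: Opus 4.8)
The plan is to obtain both assertions as direct specializations of the abstract results of Section \ref{sec3}, applied to $\Phi=\Upsilon_{\delta}$ and to the sequence $\{z^k\}_{k\in\mathbb{N}}$ produced by Algorithm \ref{PALMenls}, once I confirm that the standing hypotheses of Theorems \ref{KL-converge} and \ref{KL-rate} are met here. First I would collect the ingredients already in place: $\Upsilon_{\delta}$ is proper lsc, and it is coercive and bounded below on its domain because $\Psi$ is (the quadratic terms are nonnegative and coercive in the copied variables $u,v$); Step 6 of Algorithm \ref{PALMenls} gives condition H1 with constant $a=\alpha/2$; Lemma \ref{lemma5.1}(iii) gives condition H2; and Lemma \ref{lemma5.1}(i)--(ii) give \eqref{Phi-cond1}--\eqref{Phi-cond2}. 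The last preliminary is that $\Upsilon_{\delta}$ is a KL function whenever $\Psi$ is: at any critical point $\widehat{z}=(\widehat{x},\widehat{y},\widehat{x},\widehat{y})$ of $\Upsilon_{\delta}$ the added term $\tfrac{\delta}{2}\|x-u\|^2+\tfrac{\delta}{2}\|y-v\|^2$ and its gradient vanish, so the desingularizing inequality for $\Psi$ at $(\widehat{x},\widehat{y})$ transfers to $\Upsilon_{\delta}$ at $\widehat{z}$; this is \cite[Theorem 3.6]{LiPong18} for a fixed exponent and follows by the same reasoning in the general KL case.

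With these facts, part (i) is precisely Theorem \ref{KL-converge} for $\Phi=\Upsilon_{\delta}$ and $a=\alpha/2$: substituting $\tfrac{a}{2}=\tfrac{\alpha}{4}$ shows that the set $K_1$ and the summability hypothesis displayed in (i) are verbatim the set $K_1$ and condition \eqref{assump0}, while the conclusion $\sum_{k=0}^{\infty}\|z^{k+1}-z^k\|<\infty$ is, after reindexing, the asserted $\sum_{k=0}^{\infty}\|z^k-z^{k-1}\|<\infty$. Hence for (i) I would only record this identification and invoke Theorem \ref{KL-converge}.

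For part (ii), I would first upgrade, via \cite[Theorem 3.6]{LiPong18}, the hypothesis that $\Psi$ is a KL function of exponent $\theta\in[1/2,1)$ to the statement that $\Upsilon_{\delta}$ is a KL function of the same exponent; this is exactly why the range is restricted to $[1/2,1)$, since the quoted preservation is only guaranteed there. Matching $a=\alpha/2$ once more, the sets $K_2$, $K_{31}$ and the summability bound in (ii) coincide with those entering condition \eqref{rate-cond}, so Theorem \ref{KL-rate}(ii) applies and yields both $\sum_{j=k}^{\infty}\|z^{j+1}-z^j\|<\infty$ and the linear ($\theta=1/2$) or sublinear ($\theta\in(1/2,1)$) decay of $\Delta_k$ in \eqref{aim-ineq-rate}. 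Summability forces convergence of the whole sequence to a single limit $\widetilde{z}$, Lemma \ref{lemma1-Phi}(v) gives $\widetilde{z}\in{\rm crit}\,\Upsilon_{\delta}$, and $\|z^k-\widetilde{z}\|\le\sum_{j=k}^{\infty}\|z^{j+1}-z^j\|$ is the triangle inequality, which together produce the displayed estimate.

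The only nonroutine ingredient is the KL-preservation step for $\Upsilon_{\delta}$ --- the general case for (i) and the exponent-preserving case \cite[Theorem 3.6]{LiPong18} for (ii); everything else amounts to checking that the choice $a=\alpha/2$ renders the concrete index sets $K_1,K_2,K_{31}$ and the concrete summability conditions identical to their abstract counterparts \eqref{assump0} and \eqref{rate-cond}, after which Theorems \ref{KL-converge} and \ref{KL-rate} close the argument.
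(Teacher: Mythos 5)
Your proposal is correct and follows essentially the same route as the paper, which likewise verifies H1 (with $a=\alpha/2$, so that $a/2=\alpha/4$ and $a/4=\alpha/8$ match the stated index sets), obtains H2 and conditions \eqref{Phi-cond1}--\eqref{Phi-cond2} from Lemma \ref{lemma5.1}, transfers the KL exponent via \cite[Theorem 3.6]{LiPong18}, and then invokes Theorems \ref{KL-converge} and \ref{KL-rate} for $\Phi=\Upsilon_{\delta}$. Your sketch of why the general (non-exponent) KL property passes from $\Psi$ to $\Upsilon_{\delta}$ is the only slightly delicate point (the concavity of $\varphi$ works against you when the quadratic term enlarges the function-value gap), but the paper glosses over exactly the same step, so this is not a deviation from its argument.
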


 Recalling that $\lim_{k\to\infty}\|z^k-z^{k-1}\|\!=0$,
 we have $\lim_{k\to\infty}\Upsilon_{\!\delta}(z^k)\!=\lim_{k\to\infty}\Psi(x^k,y^k)$.
 Together with Proposition \ref{KL-Xirate} for $\Phi=\Upsilon_{\delta}$,
 the following convergence rate result holds for $\{\Psi(x^k,y^k)\}_{k\in\mathbb{N}}$.
 \begin{corollary}\label{obj-rate2}
  If $\Psi$ is a KL function of exponent $\theta\in[1/2,1)$, then there exist
  $\widehat{\varrho}\in(0,1)$ and $\gamma'>0$ such that for all sufficiently large $k$,
  the following inequality holds with $\omega^*\!=\lim_{k\to\infty}\Psi(x^k,y^k)$:
  \[
    \Psi(x^k,y^k)-\omega^*\le\left\{\begin{array}{cl}
    \gamma'\widehat{\varrho}^{\lceil\frac{k-1}{m+1}\rceil} &{\rm if}\ \theta=1/2,\\
    \gamma'{k}^{\frac{1-\theta}{1-2\theta}}&{\rm if}\ \theta\in(1/2,1).
    \end{array}\right.
  \]
 \end{corollary}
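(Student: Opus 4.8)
The plan is to derive the rate for $\{\Psi(x^k,y^k)\}_{k\in\mathbb{N}}$ as a direct corollary of the rate already established for the potential function $\Upsilon_{\!\delta}$. First I would verify that all hypotheses of Lemma~\ref{KL-Xirate} hold for $\Phi=\Upsilon_{\!\delta}$. By \cite[Theorem 3.6]{LiPong18}, the assumption that $\Psi$ is a KL function of exponent $\theta\in[1/2,1)$ implies that $\Upsilon_{\!\delta}$ is also a KL function of exponent $\theta\in[1/2,1)$, i.e.\ a KL function associated to $\varphi(t)=ct^{1-\theta}$; and by Lemma~\ref{lemma5.1} the sequence $\{z^k\}_{k\in\mathbb{N}}$ generated by Algorithm~\ref{PALMenls} satisfies conditions H1--H2 while $\Upsilon_{\!\delta}$ satisfies \eqref{Phi-cond1}--\eqref{Phi-cond2} (H1 being immediate from Step~6). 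Hence the second part of Lemma~\ref{KL-Xirate} applies with $\Phi=\Upsilon_{\!\delta}$ and yields constants $\widehat{\varrho}\in(0,1)$ and $\gamma'>0$ such that $\Upsilon_{\!\delta}(z^k)-\Phi^*\le\gamma'\widehat{\varrho}^{\lfloor(k-1)/(m+1)\rfloor}$ when $\theta=1/2$ and $\Upsilon_{\!\delta}(z^k)-\Phi^*\le\gamma' k^{(1-\theta)/(1-2\theta)}$ when $\theta\in(1/2,1)$, where $\Phi^*=\lim_{k\to\infty}\Upsilon_{\!\delta}(z^k)$.

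Next I would transfer these bounds to $\Psi$. Since $z^k=(x^k,y^k,x^{k-1},y^{k-1})$, the definition \eqref{Upsilonz} gives $\Upsilon_{\!\delta}(z^k)=\Psi(x^k,y^k)+\tfrac{\delta}{2}\|x^k-x^{k-1}\|^2+\tfrac{\delta}{2}\|y^k-y^{k-1}\|^2\ge\Psi(x^k,y^k)$, whence $\Psi(x^k,y^k)-\Phi^*\le\Upsilon_{\!\delta}(z^k)-\Phi^*$. Moreover, by Lemma~\ref{lemma5.1}(i) and Lemma~\ref{lemma1-Phi}(ii) we have $\lim_{k\to\infty}\|z^k-z^{k-1}\|=0$, so the two quadratic correction terms vanish in the limit and therefore $\Phi^*=\lim_{k\to\infty}\Upsilon_{\!\delta}(z^k)=\lim_{k\to\infty}\Psi(x^k,y^k)=\omega^*$. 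Combining these two facts with the bounds above gives $\Psi(x^k,y^k)-\omega^*\le\Upsilon_{\!\delta}(z^k)-\Phi^*\le\gamma'\widehat{\varrho}^{\lfloor(k-1)/(m+1)\rfloor}$ for $\theta=1/2$ and the corresponding polynomial bound for $\theta\in(1/2,1)$.

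The only bookkeeping detail is the floor/ceiling mismatch between Lemma~\ref{KL-Xirate} and the present statement. Since $\lceil x\rceil\le\lfloor x\rfloor+1$ and $\widehat{\varrho}\in(0,1)$, we have $\widehat{\varrho}^{\lfloor(k-1)/(m+1)\rfloor}\le\widehat{\varrho}^{-1}\widehat{\varrho}^{\lceil(k-1)/(m+1)\rceil}$, so the geometric bound can be rewritten with the ceiling exponent after replacing $\gamma'$ by $\gamma'\widehat{\varrho}^{-1}$; the polynomial case requires no adjustment. There is no genuine obstacle here: the corollary is an immediate consequence of Lemma~\ref{KL-Xirate} applied to the potential function, together with the elementary inequality $\Psi(x^k,y^k)\le\Upsilon_{\!\delta}(z^k)$ and the coincidence of the two limits $\Phi^*=\omega^*$; the points requiring the most care are merely confirming that limit coincidence and absorbing the floor/ceiling gap into the constant.
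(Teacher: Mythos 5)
Your proposal is correct and follows essentially the same route as the paper: apply Lemma \ref{KL-Xirate} (second part) to $\Phi=\Upsilon_{\!\delta}$, use $\Psi(x^k,y^k)\le\Upsilon_{\!\delta}(z^k)$ together with $\lim_{k\to\infty}\|z^k-z^{k-1}\|=0$ to identify $\lim_{k\to\infty}\Upsilon_{\!\delta}(z^k)$ with $\omega^*$, and absorb the floor/ceiling discrepancy into the constant. The paper states this in one line; your write-up merely makes explicit the verification of the hypotheses (via Lemma \ref{lemma5.1} and \cite[Theorem 3.6]{LiPong18}) that the paper leaves implicit.
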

 \subsection{Numerical results of Algorithm \ref{PALMenls}}\label{sec5.2}

  We test the performance of Algorithm \ref{PALMenls} for solving the column
  $\ell_{2,0}$-regularized factorization model of low-rank matrix completion (MC) problems.
  For an index set $\Omega\subseteq\{(i,j)\,|\,i\in[n_1],j\in[n_2]\}$ with
  $[n_1]:=\{1,\ldots,n_1\}$ and $[n_2]:=\{1,\ldots,n_2\}$,
  let $P_{\Omega}(X)\in\mathbb{R}^{n_1\times n_2}$ denote the projection
  of $X\in\mathbb{R}^{n_1\times n_2}$ onto $\Omega$, i.e.,
  $[P_{\Omega}(X)]_{ij}=X_{ij}$ if $(i,j)\in\Omega$, otherwise $[P_{\Omega}(X)]_{ij}=0$.
  Given an upper estimation, say $r\in[1,\min(n_1,n_2)]$, for the rank of the true matrix $M^*$,
  we consider the following $\ell_{2,0}$-regularized factor model of low-rank MC problems
  \begin{equation}\label{MS-FL20}
  \min_{U\in \mathbb{R}^{n_1\times r},V\in\mathbb{R}^{n_2\times r}}
  \frac{1}{2}\big\|P_{\Omega}(UV^{\mathbb{T}}\!-\!M)\big\|_F^2
   +\frac{\mu}{2}\big(\|U\|_F^2+\|V\|_F^2\big)+\lambda\big(\|U\|_{2,0}+\|V\|_{2,0}\big),
  \end{equation}
 where $M\in\mathbb{R}^{n_1\times n_2}$ is an observation matrix, $\lambda>0$ is the regularization parameter,
 and $\mu>0$ is a tiny constant. For the further investigation on the model \eqref{MS-FL20},
 refer to the work \cite{TaoQP21}. The problem \eqref{MS-FL20} has
 the form of \eqref{PALM-prob} with $H(U,V)=\frac{1}{2}\big\|P_{\Omega}(UV^{\mathbb{T}}-M)\big\|_F^2$,
 $f(U)=\frac{\mu}{2}\|U\|_F^2+\lambda\|U\|_{2,0}$ and $g(V)=\frac{\mu}{2}\|V\|_F^2+\lambda\|V\|_{2,0}$.
 Clearly, the objective function of \eqref{MS-FL20} is coercive and lower bounded.
 The partial gradients $\nabla_UH(\cdot,V)$ and $\nabla_VH(U,\cdot)$ are respectively
 $\|V\|^2$ and $\|U\|^2$-Lipschitz continuous.

 All trials in the subsequent experiments are generated randomly with a triple $(n_1,n_2,r)$
 in the following way. Assume that a random index set $\Omega=\big\{(i_t,j_t)\in[n_1]\times [n_2]\,|\,t=1,\ldots,p\big\}$
 is available, and that the samples of the indices are drawn independently
 from a general sampling distribution $\Pi=\{\pi_{kl}\}_{k\in[n_1],l\in[n_2]}$
 on $[n_1]\times[n_2]$. We adopt the same non-uniform sampling scheme as in \cite{Fang18},
 i.e., for each $(k,l)\in[n_1]\times[n_2]$, take $\pi_{kl}=p_kp_l$ with $p_k= 2p_0$
 if $k\le\frac{n_1}{10}$, $p_k= 4p_0$ if $\frac{n_1}{10}\le k\le \frac{n_1}{5}$,
 otherwise $p_k=p_0$, where $p_0>0$ is a constant such that $\sum_{k=1}^{n_1}p_k=1$,
 and $p_l$ is defined in a similar way. The entries $M_{i_t,j_t}$ with $(i_t,j_t)\in\Omega$
 for $t=1,2,\ldots,p$ are generated via the observation model
 \[
   M_{i_t,j_t}=M_{i_t,j_t}^*+\sigma({\xi_{t}}/{\|\xi\|})\|M_{\Omega}^*\|_F,
 \]
 where $M^*\!\in\mathbb{R}^{n_1\times n_2}$ is the true matrix of rank $r^*$,
 $\xi\in\mathbb{R}^p$ is the noisy vector whose entries are i.i.d. and
 obey the standard normal distribution, and $\sigma>0$ represents the noise level.
 In the subsequent experiments, we choose $\mu=10^{-10}$ and the parameters of
 Algorithm \ref{PALMenls} as follows:
 \[
  \underline{\tau}=\!10^{-8},\overline{\tau}\!=10^{8},
  \beta_{\rm max}\!=1,\eta=0.01,\eta_1=\eta_2=0.5,\delta=0.01,
   \alpha=10^{-5},\tau_{1,0}^0=\!\frac{100}{\|V^0\|^2},\tau_{2,0}^0=\!\frac{100}{\|U^0\|^2}.
 \]

 We apply Algorithm \ref{PALMenls} for solving the problem \eqref{MS-FL20} with
 $(n_1,n_2,r)=(1000,1000,100)$, and compare its performance with
 those of Algorithm \ref{PALMenls} with $\beta_{\rm max}=0$ (PALMnls),
 Algorithm \ref{PALMenls} with $m=0$ (PALMels), Algorithm \ref{PALMenls}
 with $\beta_{\rm max}\!=0,m=0$ (PALMls), PALM with extrapolation (PALMe) and PALM.
 We evaluate the performances of different algorithms by an evolution of objective values
 as in Section \ref{sec4.2}. From Figure \ref{fig3}, we see that PALMenls and PALMels
 almost have the same performance for all test problems. In Figure \ref{fig3} (a),
 the ranks yielded by all methods equal $r$ due to a small $\lambda$,
 and now PALMe has a little better performance than PALMenls and PALMels do,
 which are much better than other methods. In Figure \ref{fig3} (b)-(c),
 the ranks yielded by PALMnls and PALMls are much lower than the ranks yielded by
 other methods, and hence they have better performance than other methods do.
 In Figure \ref{fig3} (d)-(e), the ranks yielded by PALMe and PALM are much higher
 than those yielded by other methods, and PALMenls and PALMels have better performance
 though the ranks yielded by them are same as those yielded by PALMnls and PALMls.
 From Figure \ref{fig3}, we conclude that PALMnls and PALMls are more efficient to
 reduce the rank, and PALMenls and PALMels are more efficient for the problem \eqref{MS-FL20}
 with a smaller $\lambda$ or a larger $\lambda$.

  \begin{figure}[h]
  \begin{subfigure}[b]{0.33\textwidth}
   \centering
  \includegraphics[width=5.5cm,height=5cm]{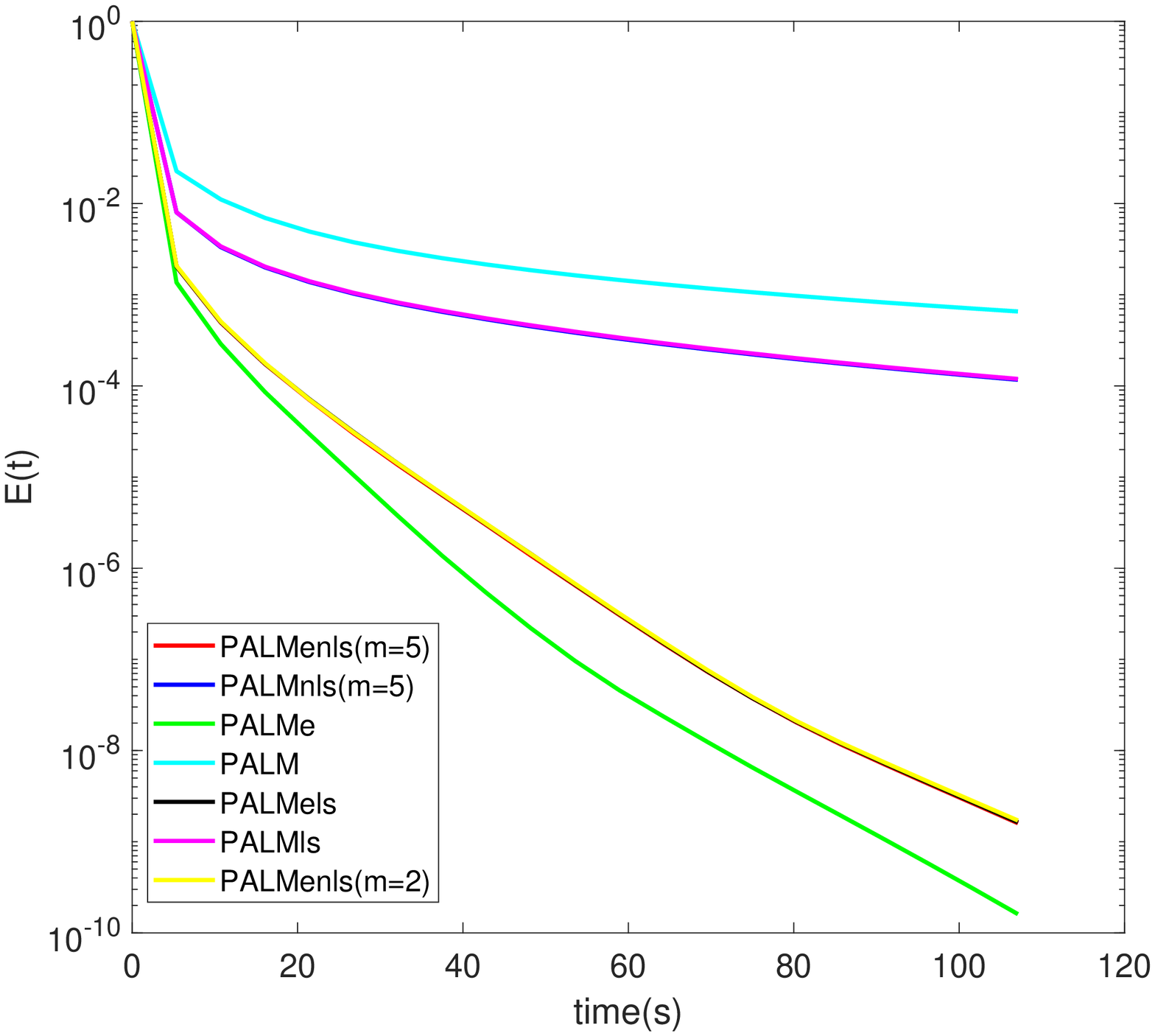}
   \caption{$\lambda=100$}
   \label{fig3:1_1}
  \end{subfigure}
  \begin{subfigure}[b]{0.33\textwidth}
   \centering
  \includegraphics[width=5.5cm,height=5cm]{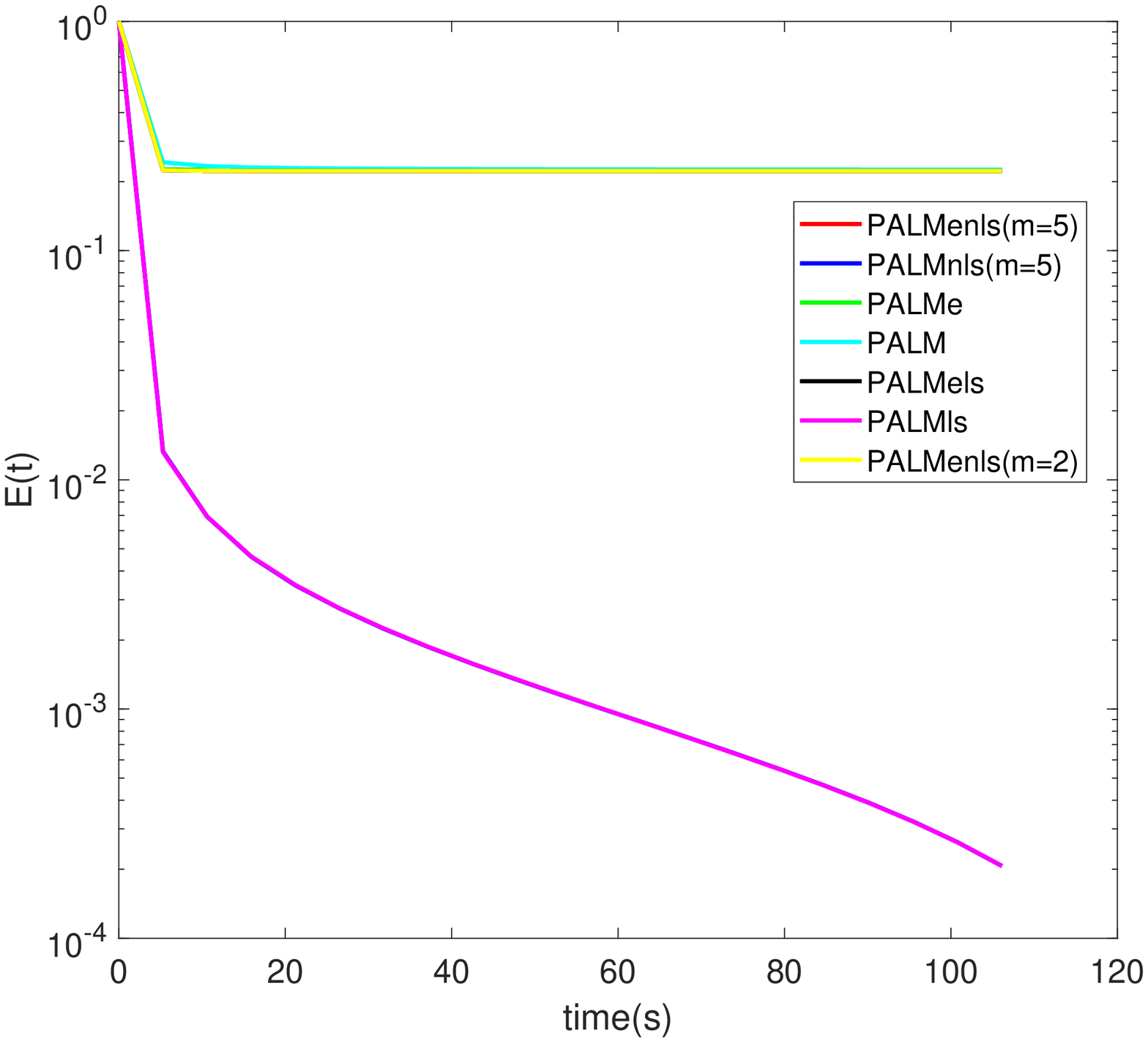}
   \caption{$\lambda=500$}
   \label{fig3:1_2}
  \end{subfigure}
  \begin{subfigure}[b]{0.33\textwidth}
   \centering
  \includegraphics[width=5.5cm,height=5cm]{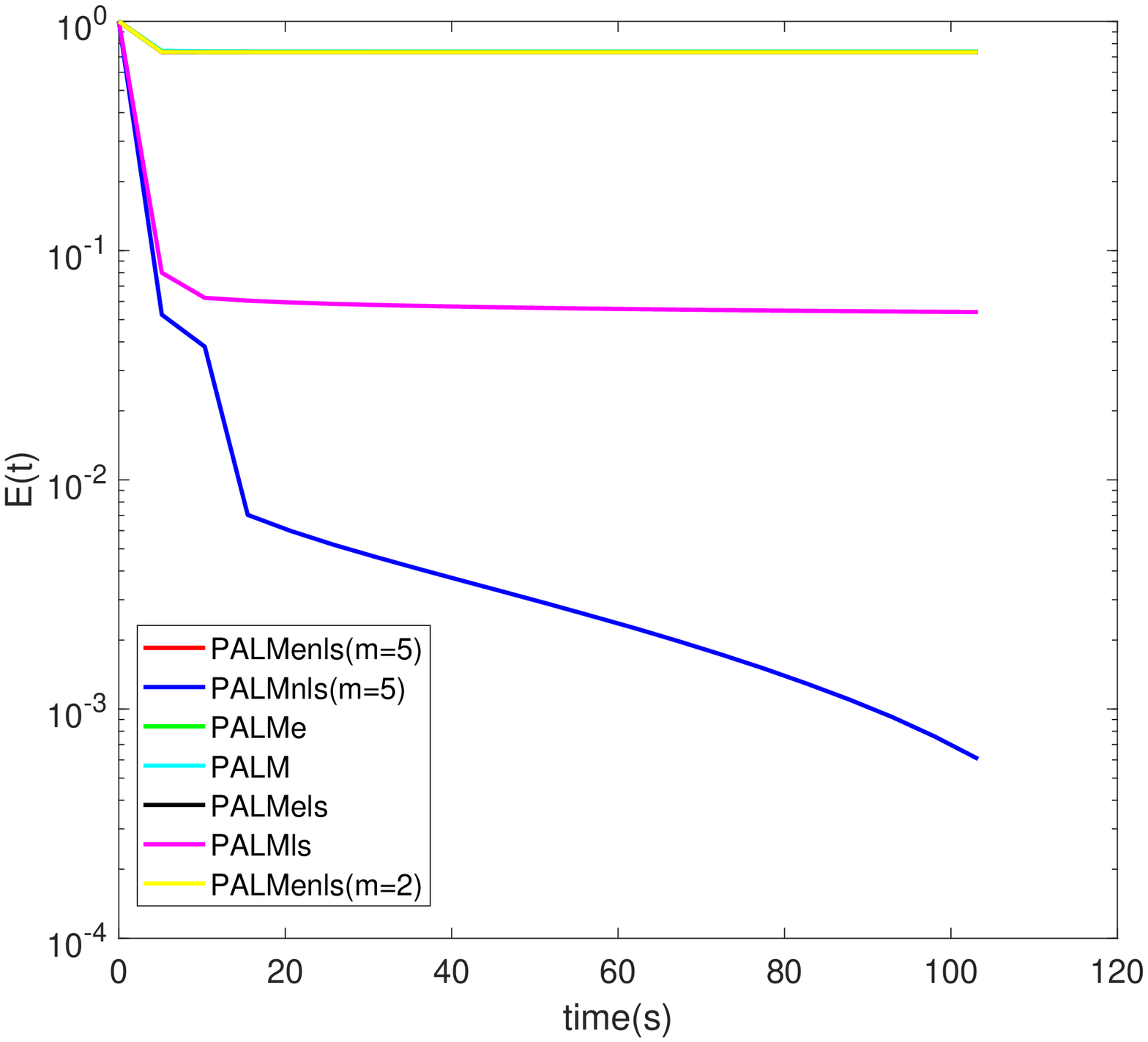}
   \caption{$\lambda=1000$}
   \label{fig3:1_3}
  \end{subfigure}
  \newline
  \begin{subfigure}[b]{0.33\textwidth}
   \centering
  \includegraphics[width=5.5cm,height=5cm]{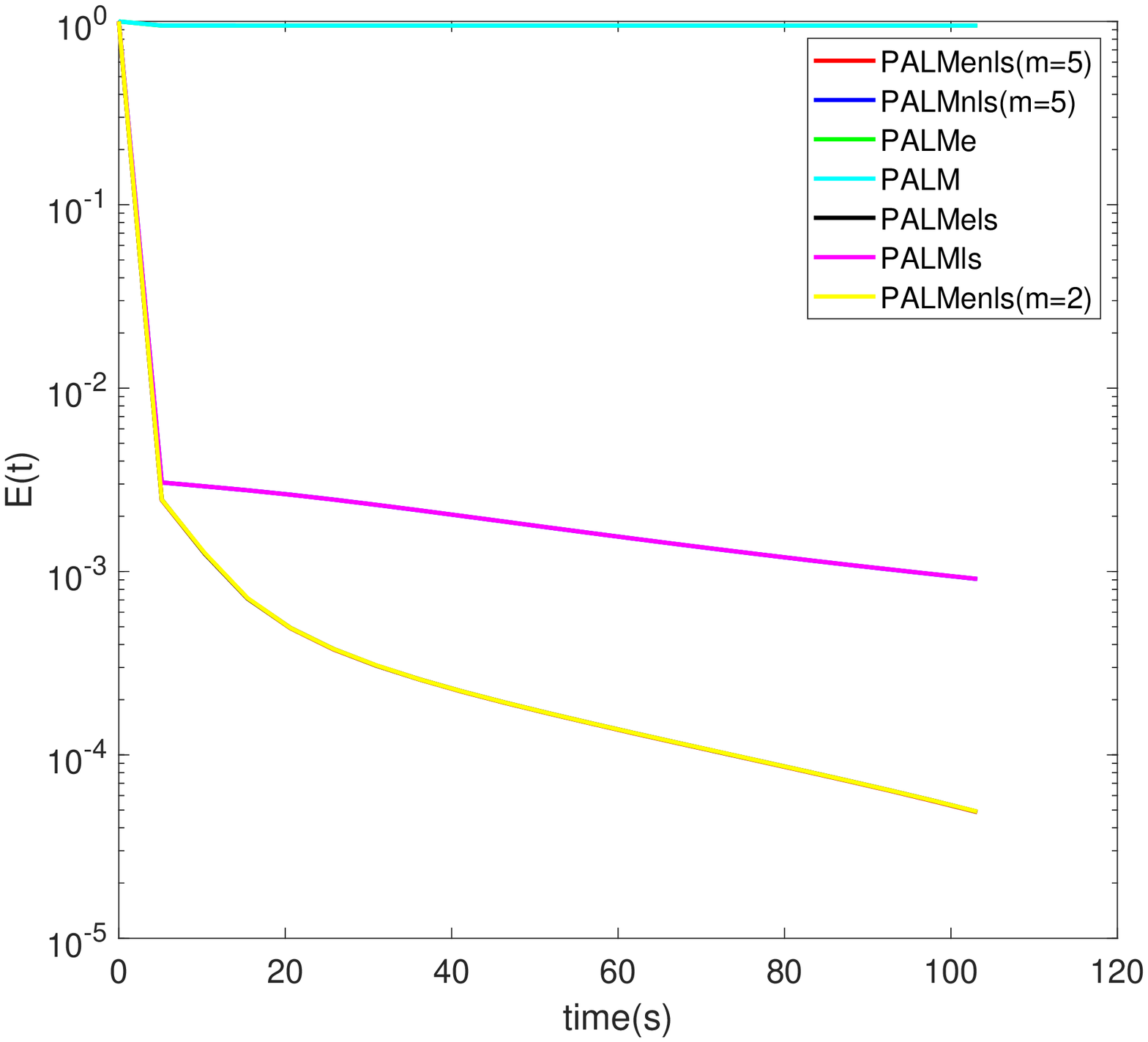}
   \caption{$\lambda=3000$}
   \label{fig3:2_1}
  \end{subfigure}
  \begin{subfigure}[b]{0.33\textwidth}
   \centering
  \includegraphics[width=5.5cm,height=5cm]{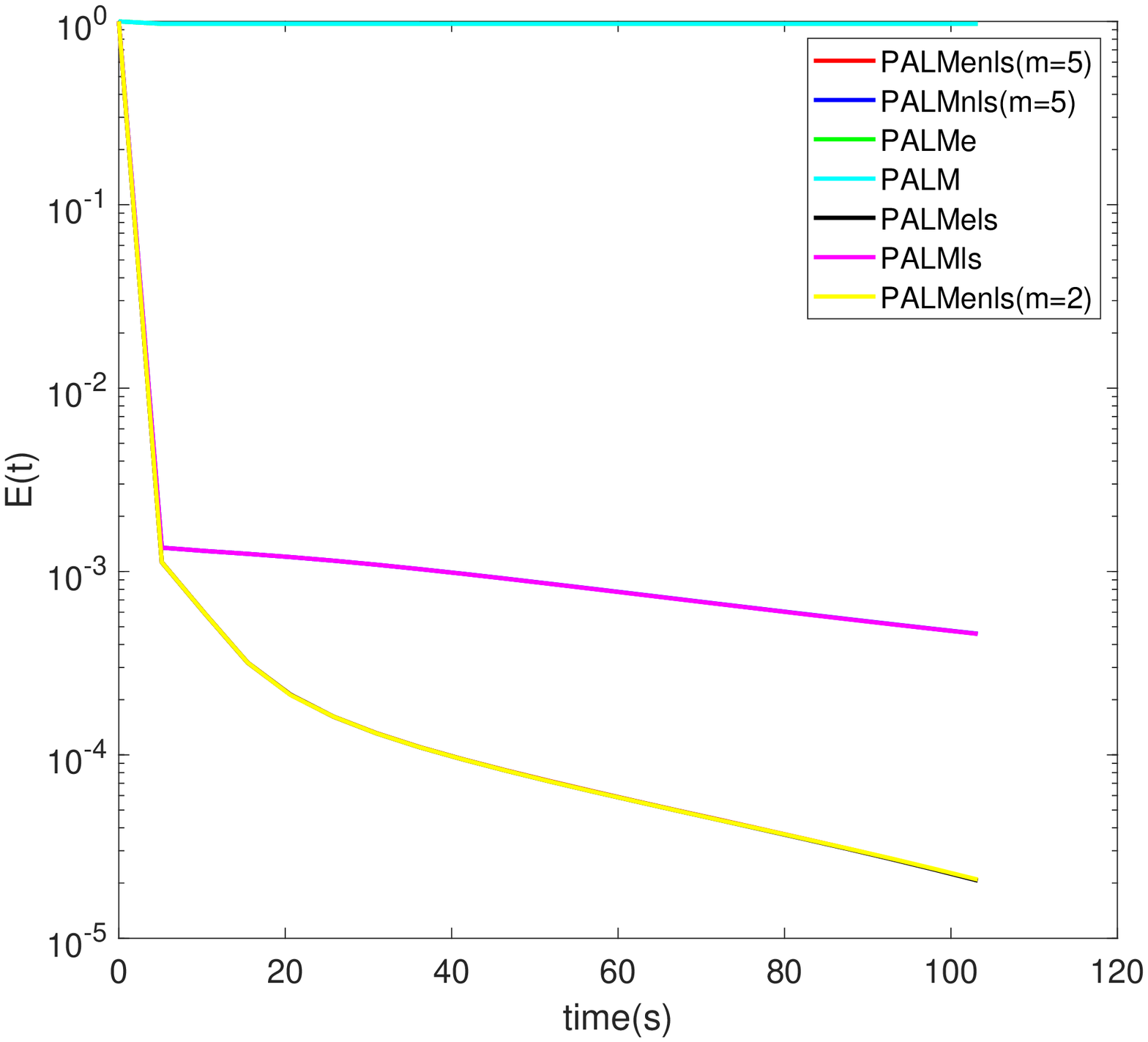}
   \caption{$\lambda=5000$}
   \label{fig3:2_2}
  \end{subfigure}
  \begin{subfigure}[b]{0.33\textwidth}
   \centering
  \includegraphics[width=5.5cm,height=5cm]{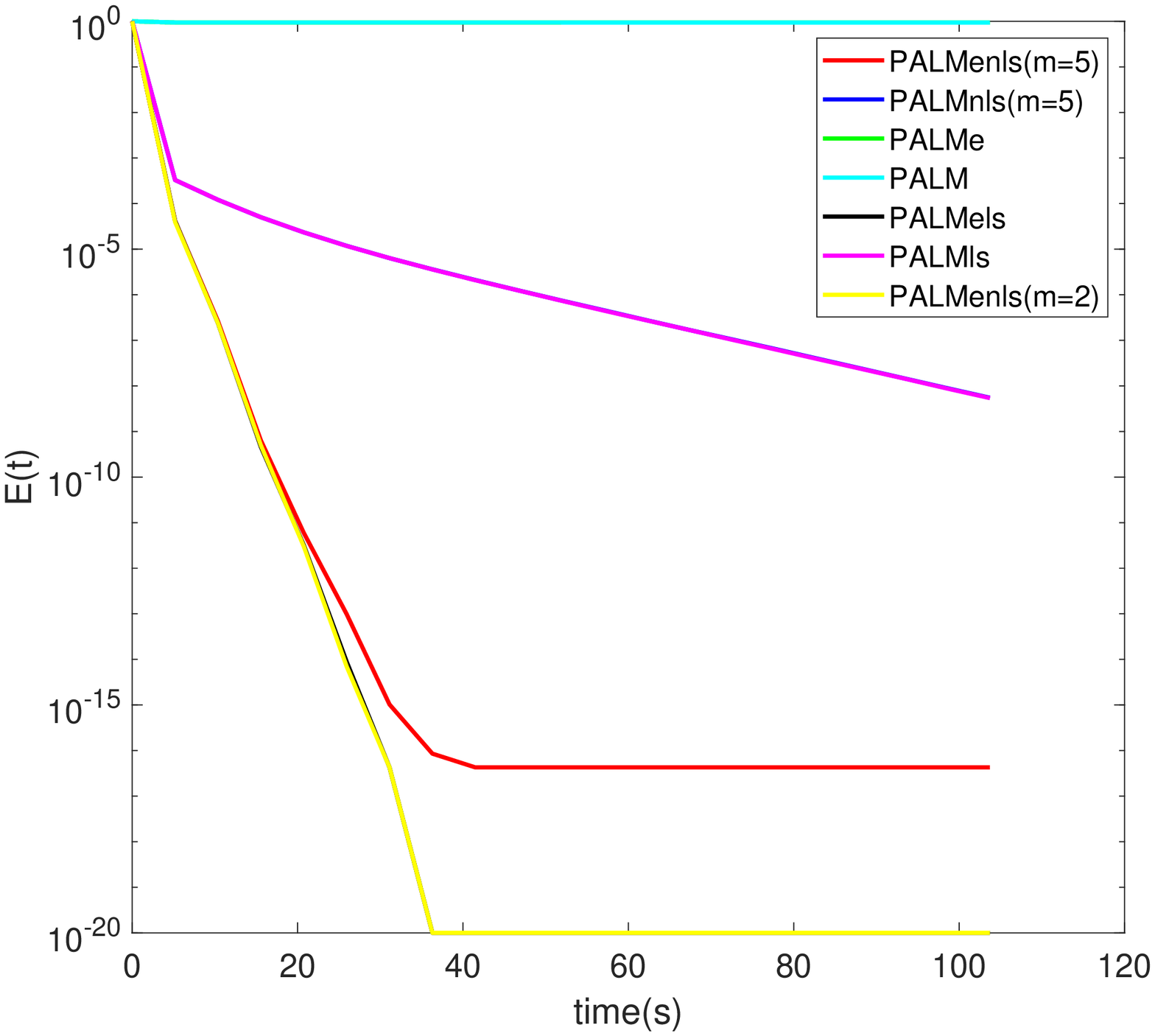}
   \caption{$\lambda=8000$}
   \label{fig3:2_3}
  \end{subfigure}
  \caption{\small Several methods for solving the nonconvex nonsmooth problem \eqref{MS-FL20} with different $\lambda$}
  \label{fig3}
 \end{figure}
 \section{Conclusions}\label{sec6}

  For the iterate sequence satisfying conditions H1-H2, 
  generated by a class of nonmonotone descent methods 
  for minimizing a nonconvex and nonsmooth KL function $\Phi$,
  we established its global convergence and local convergence rate 
  respectively under condition \eqref{assump0} and \eqref{rate-cond},
  which are proved to be sufficient and necessary if $\Phi$ is also 
  weakly convex on a neighborhood of stationary point set. 
  Condition \eqref{assump0} and \eqref{rate-cond} are not easy to check 
  since they involve the growth of an objective value subsequence, 
  though we have provided a sufficient condition (independent of 
  the objective value sequence) for them, which can be satisfied 
  by a class of $\rho$-weakly convex functions with $\rho\le\frac{a}{8(m+1)^2}$ 
  on a neighborhood of stationary point set. 
  We have applied the obtained results to establishing the global convergence and 
  convergence rate of the iterate sequence for PGenls and PALMenls, and numerical 
  results indicate that under some scenarios, they are superior to the monotone line 
  search versions and/or the extrapolation versions.
  Our future work will focus on 
  other nonmonotone descent conditions for the generated iterate
  sequences to be convergent under a weaker or verifiable assumption.

 \bigskip
 \noindent
 {\bf\large Appendix:}\\
 \begin{alemma}\label{PALM-descent}
  Let $\{(x^k,y^k)\}_{k\in\mathbb{N}}$ be the sequence generated by Algorithm \ref{PALMenls}.
  If for each $k\in\mathbb{N}$, $\beta_{k}\!\le\!\min\Big(\!\sqrt{\!\frac{0.25\delta(\tau_{1,k}^{-1}-L_1^k-\delta)}
  {L_1^k(\tau_{1,k}^{-1}-L_1^k-\delta)+(\tau_{1,k}^{-1}-L_1^k)^2}},
  \sqrt{\!\frac{0.25\delta(\tau_{2,k}^{-1}-L_2^{k+1}-\delta)}
  {L_2^{k+1}(\tau_{2,k}^{-1}-L_2^{k+1}-\delta)+(\tau_{2,k}^{-1}-L_2^{k+1})^2}}\Big)$
  with $L_2^{k+1}\!=\!L_2(x^{k+1}),L_1^k\!=\!L_1(y^k)$, then the line search criterion
  in Step 6 is satisfied when $\max(\tau_{1,k},\tau_{2,k})\le\frac{1}{L+\delta+2\alpha}$.
 \end{alemma}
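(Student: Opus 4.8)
The plan is to prove the stronger monotone decrease $\Upsilon_{\delta}(z^{k+1})\le\Upsilon_{\delta}(z^k)-\frac{\alpha}{2}\|z^{k+1}-z^k\|^2$, from which the line search criterion in Step 6 is immediate because $\max_{j=[k-m]_+,\ldots,k}\Upsilon_{\delta}(z^j)\ge\Upsilon_{\delta}(z^k)$. Recalling that $z^{k+1}-z^k=(x^{k+1}-x^k,y^{k+1}-y^k,x^k-x^{k-1},y^k-y^{k-1})$, the target amounts to controlling the four squared increments separately. Since $\Upsilon_{\delta}(z^{k+1})-\Upsilon_{\delta}(z^k)$ equals $\Psi(x^{k+1},y^{k+1})-\Psi(x^k,y^k)$ plus the regularization differences $\frac{\delta}{2}(\|x^{k+1}-x^k\|^2-\|x^k-x^{k-1}\|^2)+\frac{\delta}{2}(\|y^{k+1}-y^k\|^2-\|y^k-y^{k-1}\|^2)$, I would first establish a descent estimate for $\Psi$ assembled from two block estimates.

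For the $x$-block I start from the proximal optimality of $x^{k+1}$, i.e.\ the inequality underlying \eqref{Edescent-ineq2} evaluated with $x^k$ as comparison point, and pair it with the two-sided descent bounds for the $L_1^k$-smooth map $x\mapsto H(x,y^k)$ applied to the pairs $(\widetilde{x}^k,x^{k+1})$ and $(x^k,\widetilde{x}^k)$. The inner products against $\nabla_{\!x}H(\widetilde{x}^k,y^k)$ cancel exactly, leaving
\begin{equation*}
[f(x^{k+1})+H(x^{k+1},y^k)]-[f(x^k)+H(x^k,y^k)]\le\tfrac{\tau_{1,k}^{-1}+L_1^k}{2}\|\widetilde{x}^k-x^k\|^2-\tfrac{\tau_{1,k}^{-1}-L_1^k}{2}\|x^{k+1}-\widetilde{x}^k\|^2,
\end{equation*}
with $\tau_{1,k}^{-1}-L_1^k>0$ by the step-size bound. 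Writing $A:=\tau_{1,k}^{-1}-L_1^k$, substituting $\widetilde{x}^k-x^k=\beta_k(x^k-x^{k-1})$, expanding $\|x^{k+1}-\widetilde{x}^k\|^2$ and applying Young's inequality to the cross term with the parameter $s=\frac{A-\delta}{2A}$, the right-hand side becomes $-\frac{A+\delta}{4}\|x^{k+1}-x^k\|^2+\big(L_1^k+\frac{A^2}{A-\delta}\big)\beta_k^2\|x^k-x^{k-1}\|^2$. The stated bound on $\beta_k$ is precisely the one forcing the previous-step coefficient to be at most $\frac{\delta}{4}$. The $y$-block is handled identically with $x^{k+1}$ frozen, the $L_2^{k+1}$-smooth map $y\mapsto H(x^{k+1},y)$, the optimality inequality underlying \eqref{Edescent-ineq3}, and $\widetilde{y}^k-y^k=\beta_k(y^k-y^{k-1})$.

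Adding the two block inequalities makes the shared term $H(x^{k+1},y^k)$ telescope, giving a descent estimate for $\Psi(x^{k+1},y^{k+1})-\Psi(x^k,y^k)$. Folding in the regularization differences turns the $x$ current-step coefficient into $-\frac{A+\delta}{4}+\frac{\delta}{2}=-\frac{A-\delta}{4}$ and the $x$ previous-step coefficient into at most $\frac{\delta}{4}-\frac{\delta}{2}=-\frac{\delta}{4}$. The step-size condition $\tau_{1,k}^{-1}\ge L+\delta+2\alpha\ge L_1^k+\delta+2\alpha$ gives $A-\delta\ge2\alpha$, so $-\frac{A-\delta}{4}\le-\frac{\alpha}{2}$, while $-\frac{\delta}{4}\le-\frac{\alpha}{2}$ follows from $\alpha\le\delta/2$; the $y$-coefficients are controlled identically. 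Hence each of the four squared increments carries a coefficient at most $-\frac{\alpha}{2}$, which is exactly the monotone decrease, and the line search criterion follows.

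The main obstacle is the constant bookkeeping inside a single block: one must choose the Young parameter $s$ so that the current-step coefficient clears $\frac{\delta+\alpha}{2}$ under the relatively weak (single-$\delta$) step-size bound while the previous-step coefficient simultaneously reduces to $\frac{\delta}{4}$ under the stated threshold on $\beta_k$. The choice $s=\frac{A-\delta}{2A}=\frac{\tau_{1,k}^{-1}-L_1^k-\delta}{2(\tau_{1,k}^{-1}-L_1^k)}$ balances these two requirements, and it is this balance that produces the exact denominator $L_1^k(\tau_{1,k}^{-1}-L_1^k-\delta)+(\tau_{1,k}^{-1}-L_1^k)^2$ in the hypothesis; everything else reduces to routine manipulation of the descent lemma and Young's inequality.
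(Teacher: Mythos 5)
Your proposal is correct and follows essentially the same route as the paper's proof: two-sided descent-lemma bounds for each block paired with the proximal optimality inequalities, a Young's inequality split of the cross term whose parameter $s=\frac{A-\delta}{2A}$ is exactly the paper's $\mu=\frac{A-\delta}{2A^2}$ up to the reparameterization $s=\mu A$, and the same reduction of the nonmonotone criterion to a monotone $\frac{\alpha}{2}$-decrease of $\Upsilon_\delta$. The only difference is organizational (you bound $\Psi(x^{k+1},y^{k+1})-\Psi(x^k,y^k)$ and fold in the $\frac{\delta}{2}\|\cdot\|^2$ terms at the end, whereas the paper carries them inside $\psi_{1,\delta},\psi_{2,\delta}$ throughout), and your constants match the paper's exactly.
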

 \begin{proof}
  Fix any $y\in\mathbb{Y}$. Recall that the partial gradient $\nabla_{\!x}H(\cdot,y)$
  is $L_1(y)$-Lipschitz continuous. From the descent lemma, for any $x',x\in\mathbb{X}$
  and $\tau\le1/L_1(y)$ it holds that
  \begin{subnumcases}{}\label{Hx}
   H(x',y)\le H(x,y)+\langle\nabla_{\!x}H(x,y),x'\!-\!x\rangle
    +0.5\tau^{-1}\|x'\!-\!x\|^2,\\
  -H(x',y)\le -H(x,y)-\langle\nabla_{\!x}H(x,y),x'\!-\!x\rangle
    +0.5\tau^{-1}\|x'\!-\!x\|^2.
  \label{nHx-ineq}
  \end{subnumcases}
  Fix any $x\in\!\mathbb{X}$. Since $\nabla_{\!y}H(x,\cdot)$ is $L_2(x)$-Lipschitz continuous,
  for any $y',y\in\!\mathbb{Y}$ and $\tau\le 1/{L_2(x)}$,
  \begin{subnumcases}{}\label{Hy}
   H(x,y')\le H(x,y)+\langle\nabla_{\!y}H(x,y),y'\!-\!y\rangle
    +0.5\tau^{-1}\|y'\!-\!y\|^2,\\
   -H(x,y')\le -H(x,y)-\langle\nabla_{\!y}H(x,y),y'\!-\!y\rangle
    +0.5\tau^{-1}\|y'\!-\!y\|^2.
   \label{nHy-ineq}
  \end{subnumcases}
 For any $(x,y,u,v)\in\mathbb{X}\times\mathbb{Y}\times\mathbb{X}\times\mathbb{Y}$,
 define $\psi_{1,\delta}(x,y,u):=f(x)+H(x,y)+({\delta}/{2})\|x-u\|^2$ and
 $\psi_{2,\delta}(x,y,v):=g(y)+H(x,y)+({\delta}/{2})\|y-v\|^2$.
 From the definition of $x^{k+1}$, it follows that
 \begin{equation}\label{key-ineq2}
   f(x^{k+1})\le \langle\nabla_{\!x}H(\widetilde{x}^{k},y^k),x^{k}-x^{k+1}\rangle+f(x^{k})
      -\frac{1}{2\tau_{1,k}}\|x^{k+1}\!-\!\widetilde{x}^{k}\|^2+\frac{1}{2\tau_{1,k}}\|x^{k}\!-\!\widetilde{x}^{k}\|^2.
  \end{equation}
 Together with the definition of $\psi_{1,\delta}$, it is not difficult to obtain that
 \begin{align}\label{Lf-ineq}
  \psi_{1,\delta}(x^{k+1},y^k,x^k)
   &\le H(x^{k+1},y^k)+({\delta}/{2})\|x^{k+1}-x^k\|^2+\langle\nabla_xH(\widetilde{x}^k,y^k),x^{k}-x^{k+1}\rangle\nonumber\\
   &\quad+f(x^{k})-\frac{1}{2\tau_{1,k}}\|x^{k+1}\!-\!\widetilde{x}^{k}\|^2+\frac{1}{2\tau_{1,k}}\|x^{k}\!-\!\widetilde{x}^{k}\|^2,\nonumber\\
   &\le H(x^{k},y^k)+({\delta}/{2})\|x^{k+1}-x^k\|^2+f(x^{k})\nonumber\\
   &\quad-\frac{1}{2}[\tau_{1,k}^{-1}-L_1(y^k)]\|x^{k+1}\!-\!\widetilde{x}^{k}\|^2
   +\frac{1}{2}[\tau_{1,k}^{-1}+L_1(y^k)]\|x^{k}\!-\!\widetilde{x}^{k}\|^2\nonumber\\
  &\le \psi_{1,\delta}(x^{k},y^k,x^{k-1})-\frac{1}{2}(\tau_{1,k}^{-1}\!-\!L_1^k-\delta)\|x^{k+1}\!-\!x^{k}\|^2\nonumber\\
   &\quad -\frac{\delta\!-\!2L_1^k\beta_{k}^2}{2}\|x^{k}\!-\!x^{k-1}\|^2
   -(\tau_{1,k}^{-1}\!-\!L_1^k)\beta_{k}\langle x^{k+1}\!-\!x^k,x^k\!-\!x^{k-1}\rangle
  \end{align}
  where the second inequality is obtained by using \eqref{Hx} with $x'=x^{k+1},x=\widetilde{x}^k$
  and \eqref{nHx-ineq} with $x'=x^{k},x=\widetilde{x}^k$, and the last one is due to
  $\widetilde{x}^k=x^k+\beta_{k}(x^k\!-\!x^{k-1})$. Since for any $\mu>0$,
  \(
   \big|2(\tau_{1,k}^{-1}\!-\!L_1^k)\beta_{k}\langle x^{k+1}\!-\!x^k,x^k\!-\!x^{k-1}\rangle\big|
   \le \mu(\tau_{1,k}^{-1}\!-\!L_1^k)^2\|x^{k+1}\!-\!x^k\|^2+\frac{\beta_{k}^2}{\mu}\|x^k\!-\!x^{k-1}\|^2,
  \)
  we have
  \begin{align*}
   \psi_{1,\delta}(x^{k+1},y^k,x^k)
   &\le \psi_{1,\delta}(x^{k},y^k,x^{k-1})
     -\frac{1}{2}\big[(\delta\!-\!2L_1^k\beta_{k}^2)-\beta_{k}^2/\mu\big]\big\|x^k\!-\!x^{k-1}\big\|^2\\
  &\quad -\frac{1}{2}\big[(\tau_{1,k}^{-1}\!-\!L_1^k-\delta)
    -\mu(\tau_{1,k}^{-1}\!-\!L_1^k)^2\big]\|x^{k+1}\!-\!x^k\|^2.
   \end{align*}
  By taking $\mu=\frac{\tau_{1,k}^{-1}-L_1^k-\delta}{2(\tau_{1,k}^{-1}-L_1^k)^2}$ and
  using the given assumption on $\beta_{k}$, it follows that
 \begin{equation}\label{psi1}
 \psi_{1,\delta}(x^{k+1},y^k,x^k)
 \le \psi_{1,\delta}(x^{k},y^k,x^{k-1})-({\delta}/{4})\|x^k\!-\!x^{k-1}\|^2
  -\frac{1}{4}(\tau_{1,k}^{-1}\!-\!L_1^k-\delta)\|x^{k+1}\!-\!x^k\|^2.
 \end{equation}
 Similarly, from the definition of $y^{k+1}$ and the expression of $\psi_{2,\delta}$, it follows that
 \begin{align}\label{Lf-ineq2}
   \psi_{2,\delta}(x^{k+1},y^{k+1},y^k)
   &\le H(x^{k+1},y^{k+1})+\frac{\delta}{2}\|y^{k+1}-y^k\|^2+\langle\nabla_yH(x^{k+1},\widetilde{y}^k),y^{k}-y^{k+1}\rangle\nonumber\\
   &\quad +g(y^{k})-\frac{1}{2\tau_{2,k}}\|y^{k+1}\!-\!\widetilde{y}^{k}\|^2
   +\frac{1}{2\tau_{2,k}}\|y^{k}\!-\!\widetilde{y}^{k}\|^2,\nonumber\\
   &\le H(x^{k+1},y^k)+({\delta}/{2})\|y^{k+1}-y^k\|^2+g(y^{k})\nonumber\\
   &\quad-\frac{1}{2}(\tau_{2,k}^{-1}-L_2^{k+1})\|y^{k+1}\!-\!\widetilde{y}^{k}\|^2
   +\frac{1}{2}(\tau_{2,k}^{-1}+L_2^{k+1})\|y^{k}\!-\!\widetilde{y}^{k}\|^2
  \end{align}
  where the second inequality is obtained by using \eqref{Hy} with $y'=y^{k+1},y=\widetilde{y}^k$
  and \eqref{nHy-ineq} with $y'=y^{k},y=\widetilde{y}^k$.
  Substituting $\widetilde{y}^k=y^k+\beta_{k}(y^k\!-\!y^{k-1})$ into \eqref{Lf-ineq2},
  for any $\mu>0$ it holds that
  \begin{align*}
   \psi_{2,\delta}(x^{k+1},y^{k+1},y^k) 
   &\le \psi_{2,\delta}(x^{k+1},y^k,y^{k-1})
     -\frac{1}{2}\big[(\delta\!-\!2L_2^{k+1}\beta_{k}^2)-\beta_{k}^2/\mu\big]\big\|y^k\!-\!y^{k-1}\big\|^2\\
  &\quad -\frac{1}{2}\big[(\tau_{2,k}^{-1}\!-\!L_2^{k+1}\!-\!\delta)
  -\mu(\tau_{2,k}^{-1}\!-\!L_2^{k+1})^2\big]\|y^{k+1}\!-\!y^k\|^2.
  \end{align*}
  By taking $\mu=\frac{\tau_{2,k}^{-1}-L_2^{k+1}-\delta}{2(\tau_{2,k}^{-1}-L_2^{k+1})^2}$
  and using the given assumption on $\beta_{k}$, we obtain that
  \[
    \psi_{2,\delta}(x^{k+1},y^{k+1},y^k)
    \le \psi_{2,\delta}(x^{k+1},y^k,y^{k-1})-\frac{\delta}{4}\|y^k\!-\!y^{k-1}\|^2
     -\frac{1}{4}(\tau_{2,k}^{-1}\!-\!L_2^{k+1}\!-\!\delta)\|y^{k+1}\!-\!y^k\|^2.
  \]
 Together with the inequality \eqref{psi1} and the definition of $\Upsilon_{\delta}$,
 it follows that
 \[
  \Upsilon_{\delta}(z^{k+1})\le\Upsilon_{\delta}(z^{k})-\min\Big\{\frac{\delta}{4},
  \frac{\tau_{1,k}^{-1}\!-\!L_1^k-\delta}{4},\frac{\tau_{2,k}^{-1}\!
  -\!L_2^{k+1}\!-\!\delta}{4}\Big\}\|z^{k+1}\!-\!z^{k}\|^2.
 \]
 Notice that $\delta\in(0,1)$ and $0<\alpha\le{\delta}/{2}$.
 The line search criterion in Step 6 for $m=0$ is satisfied when
 $\tau_{1,k}\le\frac{1}{L+\delta+2\alpha}\le\frac{1}{L_1^k+\delta+2\alpha}$ and
 $\tau_{2,k}\le\frac{1}{L+\delta+2\alpha}\le\frac{1}{L_2^{k+1}+\delta+2\alpha}$,
 so is the criterion in Step 6.
 \end{proof}

 \end{document}